\newtheoremstyle{break}
  {\topsep}{\topsep}%
  {\itshape}{}%
  {\bfseries}{}%
  {\newline}{}%
\theoremstyle{break}
\newtheorem{theorem}{Theorem}
\newtheorem{remark}{Remark}
\newtheorem{definition}{Definition}
\newtheorem{lemma}{Lemma}
\newtheorem{example}{Example}
\newtheorem{algorithm}{Algorithm}
\title{Statistical inference of high-dimensional vector autoregressive time series with non-i.i.d. innovations}
\author{Yunyi Zhang}
\date{\begin{small}
School of Data Science, The Chinese University of Hong Kong, Shenzhen, Guangdong, 518172, China\\
zhangyunyi@cuhk.edu.cn
\end{small}
}
\DeclareMathOperator*{\argmin}{\arg\!\min}
\begin{document}
\maketitle
\abstract{
Independent or i.i.d. innovations is an essential assumption in the literature for analyzing a  vector time series. However, this assumption is either too restrictive for a real-life time series to satisfy or is hard to  verify through a hypothesis test. This paper performs statistical inference on a sparse high-dimensional vector autoregressive time series, allowing its white noise innovations to be dependent, even non-stationary. To achieve this goal, it adopts a post-selection estimator to fit the vector autoregressive model and derives the asymptotic distribution of the post-selection estimator. The innovations in the autoregressive time series are not assumed to be independent, thus making the covariance matrices of the autoregressive coefficient estimators complex and difficult to estimate. Our work develops a bootstrap algorithm to facilitate practitioners in performing statistical inference without having to engage in sophisticated calculations. Simulations and real-life data experiments reveal the validity of the proposed methods and theoretical results.

Real-life data is rarely considered to exactly satisfy an autoregressive model with independent or i.i.d. innovations, so our work should better reflect the reality compared to the literature that assumes i.i.d. innovations.

\textbf{Key words: } Vector autoregressive time series, Non-stationary time series, High-dimensional data, Bootstrap algorithm, Post-selection estimation.
}
\section{Introduction}
The vector autoregressive time series with order $p$(VAR(p)) of the form
\begin{equation}
\mathbf{x}^{(t)} = \sum_{j = 1}^p\mathbf{A}^{(j)}\mathbf{x}^{(t - j)} + \boldsymbol\epsilon^{(t)},\  \mathbf{x}^{(t)},\boldsymbol\epsilon^{(t)}\in\mathbf{R}^d,\ \mathbf{A}^{(j)}\in\mathbf{R}^{d\times d},\ t\in\mathbf{Z},
\label{eq.AR_model}
\end{equation}
where the innovations $\boldsymbol\epsilon^{(t)}$ are white noises, is one of the fundamental and most important models for
revealing  spatial and temporal dependence in data. VAR model is always considered to be a direct generalization of the
scalar autoregressive process introduced in \cite{DPbook}. It finds wide applications in various fields, such as engineering, finance and economics, biological and environmental
studies, among others. To mention a few, \cite{MESSNER20191485} and  \cite{HANNAFORD2023107659} adopted the VAR model for forecasting wind power as well as  analyzing microbial dynamics in a wastewater treatment plant, respectively. \cite{RePEc:tpr:restat:v:68:y:1986:i:4:p:628-37}, \cite{10.2307/2328190}, and \cite{10.1093/icc/dtq018} used the VAR model to explain some economy phenomena such as fluctuations in the U.S. -- Canadian exchange rate; while \cite{Varforcast} and \cite{HE2021105171} employed the vector autoregressive model to study temperature changes.

The VAR model enjoys many favourable properties. Compared to the more sophisticated models such as vector autoregressive
and moving average(VARMA) time series models, state space models introduced in \cite{MR1238940}, and nonlinear time series models like the recurrent neural network presented in \cite{10.1162/neco.1997.9.8.1735}, the VAR model has relatively few parameters to estimate. Moreover, it is easy to fit using linear regression methods or the Yule--Walker equation, as demonstrated in the work of \cite{MR3357870}, \cite{MR3450535}, \cite{liu2021highdimensional}, \cite{MR4561044}, and the reference therein. Furthermore, according to
\cite{DIAS201875} and \cite{MR4480716}, the VAR model does not introduce identification issues and is easy to interpret. Therefore, despite advances in time series modelling, research on VAR models continues to play an important roles in statistics literature.

Statisticians always need to impose certain technical assumptions when conducting statistical inference on vector autoregressive time series.
Two major assumptions in the literature, as discussed in \cite{MR1093459, MR3526245} and
\cite{MR1238940}, are the ``low-dimensional'' assumption, which requires that the number of parameters needed to be estimated is small compared to the sample size, and the assumption that the innovations ($\boldsymbol\epsilon^{(t)}$ in eq.\eqref{eq.AR_model}) are independent and identically distributed(i.i.d.). Especially, the assumption of i.i.d. innovations ensures that the VAR time series is strictly stationary.  However, these two assumptions often fail in the modern era, where the observed time series may have many dimensions, and their distributions may not be the same. To illustrate why the low-dimensional assumption fails, suppose the observed data $\mathbf{x}^{(t)}\in\mathbf{R}^{50}$. If statisticians fit the data with a VAR(3) model, then they need to estimate a  total of $3\times 50\times 50 = 7500$ parameters, which sometimes can be even greater than the sample size.

Another issue that needs careful reconsideration is the assumption of i.i.d. innovations, which implies strict stationarity in VAR time series data(see \cite{MR1093459} for an illustration). When the observed time series spans a short time period, assuming i.i.d. innovations, which renders the VAR time series strict stationary, is reasonable. However, as demonstrated by \cite{MR4270034}, if a time series covers a long time span, assuming that the distributions of the innovations do not evolve with time is often unrealistic.

Our work focuses on statistical inference under the setting that eq.\eqref{eq.AR_model} holds true with the white noise innovations $\boldsymbol\epsilon^{(t)}$, i.e.,
$$
\mathbf{E}\boldsymbol\epsilon^{(t)} = 0\ \text{and }
\mathbf{E}\boldsymbol\epsilon^{(t_1)}\boldsymbol\epsilon^{(t_2)\top} = \begin{cases}
0\ \text{if } t_1\neq t_2\\
\boldsymbol \Sigma_{\boldsymbol\epsilon}\ \text{if } t_1 = t_2
\end{cases}.
$$
However, $\boldsymbol\epsilon^{(t)}$ can be dependent, and the distributions of $\boldsymbol\epsilon^{(t)}$ may vary with respect to the time index $t$. Example \ref{example.ar_noniid} provides an illustrative example. By adopting this setting, the second--order structures of the time series, including its covariance matrix, cross -- covariance matrices, autoregressive parameters, and spectral density, remain unchanged. In other words, the observed time series $\mathbf{x}^{(t)}$ is weakly stationary but not necessarily strictly stationary. Especially, as demonstrated in Example \ref{example.ar_noniid}, the distributions of the corresponding estimators can be quite different from the i.i.d. innovation situations. \cite{zhang2023simultaneous} and \cite{MR4025739} considered this problem for scalar time series, but to the best of our knowledge, we have not yet seen many discussions on vector, especially high--dimensional VAR models with non-i.i.d. innovations.

The advantages of the aforementioned setting can be viewed from two perspectives. On the one hand, according to the Wold representation presented in \cite{MR1238940} and similar to the work of \cite{MR2893863}, a wide range of nonlinear stationary time series possesses an infinite order VAR representation  with white noise (but not necessarily i.i.d.) innovations, and eq.\eqref{eq.AR_model} can be considered as a truncated approximation of this representation. Therefore, by allowing non-i.i.d. innovations, the
applicability of eq.\eqref{eq.AR_model} extends beyond linear time series. On the other hand, from a practical point of view, while statisticians can test whether the innovations in a time series are white noises using tests like the Portmanteau test introduces in \cite{MR3235390} and \cite{ESCANCIANO2009140}; testing for the i.i.d. nature of innovations is generally hard, even for scalar time series. Furthermore, as illustrated in example \ref{example.ar_noniid}, the variance of the autoregressive coefficient matrix estimator may vary significantly even when time series exhibit the same second-order structures. Therefore, the assumption of i.i.d. innovations has a substantial impact on the distribution of the estimated autoregressive parameters. By allowing non-i.i.d. innovations in eq.\eqref{eq.AR_model}, statisticians only need to test the white noise innovation assumption before fitting the VAR model, resulting in reliable statistical inference.

\begin{example}
Suppose the data $\mathbf{x}^{(t)}\in\mathbf{R}^5, t\in\mathbf{Z}$ satisfy the VAR(1) model $\mathbf{x}^{(t)} = \mathbf{A}\mathbf{x}^{(t - 1)} + \boldsymbol{\Theta\gamma}^{(t)}$, here
$$
\mathbf{A} =
\left[
\begin{matrix}
0.6 & 0.4 & 0 & -0.4 & 0.4\\
-0.4 & 0.6 & 0.4 & -0.4 & 0.4\\
0    & -0.4 & 0.6 & -0.4 & 0.4\\
0    & 0    & -0.4 &-0.4 & 0.4\\
\end{matrix}
\right]\ \text{and } \boldsymbol\Theta =
\left[
\begin{matrix}
1.0 & 0.5 & 0 & 0 & 0\\
-0.5 & 1.0 & 0.5 & 0 & 0\\
0    & -0.5 & 1.0 & 0.5 & 0\\
0    & 0    & -0.5 &1.0 & 0.5\\
\end{matrix}
\right]
$$
Consider three types of $\boldsymbol\gamma^{(t)}$, i.e., for $t\in\mathbf{Z}$ and $i = 1,\cdots, 5$,

\textit{Independent: } $\boldsymbol\gamma^{(t)}_i = \mathbf{e}^{(t)}_i$.

\textit{Product normal: } $\boldsymbol\gamma^{(t)}_i = \mathbf{e}^{(t)}_i\times \mathbf{e}^{(t - 1)}_i\times \mathbf{e}_i^{(t - 2)}$.

\textit{Non-stationary: } $\boldsymbol\gamma_i^{(t)} = \mathbf{e}_i^{(t)}$ for $t = 2k + 1$ and $\boldsymbol\gamma_i^{(t)} = \mathbf{e}_i^{(t)}\mathbf{e}_i^{(t - 1)}$ for $t = 2k$.

Here $\mathbf{e}_i^{(t)}$ are i.i.d. normal random variables with mean $0$ and variance $1$. In this setup, all $\boldsymbol \gamma_i^{(t)}$ are white noises, meaning that $\mathbf{E}\boldsymbol\gamma^{(t)}_i = 0$; $\mathbf{E}\boldsymbol\gamma^{(t_1)}_i\boldsymbol\gamma^{(t_2)}_j = 0$ if $t_1\neq t_2$ or $i\neq j$; and $\mathbf{E}\boldsymbol\gamma^{(t)2}_i = 1$. However, the \textit{product normal} innovations are dependent, and the \textit{non-stationary } innovations are non-stationary.
Table \ref{example.ar_noniid} demonstrates the variance of the estimator for $\mathbf{A}_{12}$. The estimator remains consistent for different kinds of innovations. However, the variance of the estimator is affected when non-i.i.d. innovations are introduced.
\begin{table}[htbp]
\caption{Estimator $\widehat{\mathbf{A}}_{12}$ for $\mathbf{A}_{12}$ and the variance of $\widehat{\mathbf{A}}_{12}$. We use Monte-Carlo method to derive the variance of $\widehat{\mathbf{A}}_{12}$. The sample size is $10000$ and the simulation times is $5000$.}
\centering
\begin{tabular}{l l l}
\hline\hline
Innovation type  &  Estimate & The variance $Var(\sqrt{T}(\widehat{\mathbf{A}}_{12} - \mathbf{A}_{12}))$\\
\hline
Independent      &  0.40     & 0.20\\
Product normal   &  0.41     & 0.36\\
Non-stationary   &  0.40     & 0.21\\
\hline\hline
\end{tabular}
\end{table}
\label{example.ar_noniid}
\end{example}

\textbf{Related literature: }  There have been several attempts to relax the ``low-dimensional'' assumption, which involves performing statistical inference on a VAR model without assuming a small number of parameters to estimate. One approach is to impose structural assumptions on the autoregressive coefficient matrix $\mathbf{A}^{(j)}$ to account for a large number of parameters. For example, statisticians may assume that $\mathbf{A}^{(j)}$ is a sparse matrix, where only a few parameters are not zero. The work of \cite{MR4325665} made use of this setting to derive consistent estimators and performed valid statistical inference. Once the sparsity assumption is adopted, statisticians can leverage the power of Lasso, as introduced by \cite{MR1379242} and \cite{MR2274449}, to recover the underlying sparsity pattern of $\mathbf{A}^{(j)}$ and obtain a satisfactory estimator. Readers can refer to \cite{MR3357870}, \cite{MR3343790}, \cite{MR4102690}, \cite{MR4278792}, and \cite{MR4446423}
for a detailed introduction to Lasso estimation. Apart from Lasso, \cite{MR2847973} proposed
a method to estimate the precision matrix of high-dimensional data, and \cite{MR3450535} utilized this method along with the penalized Yule-Walker equation to estimate $\mathbf{A}^{(j)}$. \cite{liu2021highdimensional} developed a debiased estimator based on a convex loss function. \cite{MR4278792} derived the asymptotic distribution of the desparsified Lasso estimator and developed a bootstrap algorithm to assist statistical inference, allowing statisticians to conduct statistical inference without resorting to complex calculations. We also refer the readers to \cite{MR2732601}, \cite{MR3346699}, and  \cite{MR4278792} regarding bootstrapping a high-dimensional time series.

In addition to the general sparsity assumption, statisticians have explored various specific structural assumptions when applying the VAR model to high-dimensional time series data
in the literature. To mention a few examples, \cite{MR4480716} assumed that the coefficient matrices $\mathbf{A}^{(j)}$ have low rank and performed reduced-rank regression in estimation; \cite{MR3620446}, \cite{MR4259144}, \cite{MR3725456}, and \cite{MR3662449}, respectively imposed the banded structure,  linear restrictions, block structure, and network structure on the autoregressive coefficient matrices. \cite{MR4561044} relaxed the sparsity assumption to the so-called ``approximately sparse matrices'' in statistical inference; also see \cite{MR2485008} for a further introduction.

In addition to relaxing the low-dimensional assumption, another research direction aims to remove the assumption of i.i.d. innovations, thus allowing for time series to be weakly stationary or non-stationary. \cite{MR2893863} and \cite{MR3796524} extended the validity of bootstrap-based inference to general time series. \cite{MR3331856}, \cite{MR4270034}, and \cite{MR4325666}, respectively discussed prediction and autoregressive spectral estimation for general time series. \cite{MR2190207}, \cite{MR4025739}, and \cite{zhang2023simultaneous} performed statistical inference on a scalar autoregressive model without assuming that the innovations are i.i.d. random variables.
\cite{ding2023autoregressive} proved that a certain type of non-stationary time series can be approximated by an autoregressive model with independent innovations. When the innovations in the autoregressive model \eqref{eq.AR_model} are not i.i.d., the observations $\mathbf{x}^{(t)}$ become non-stationary, meaning the marginal distributions of $\mathbf{x}^{(t)}$ are not identical. Statisticians may refer to the non-stationary time series literature, such as \cite{MR2827528}, \cite{MR3299408}, \cite{MR3920364}, \cite{MR4134802}, \cite{MR4091095}, \cite{MR4270034}, and \cite{zhang2023simultaneous}, to analyze the time series $\mathbf{x}^{(t)}$ under this circumstance.

One recent trend in the literature tries to simultaneously address the high-dimensionality of parameters and the non-stationarity present in vector time series. Examples of such research include \cite{MR3779697}, \cite{chang2023central}, and \cite{mies2022sequential} for inference on sample mean; \cite{MR4206676} for covariance and spectral density estimation; \cite{doi:10.1080/01621459.2022.2161386} for inference on functional-coefficient autoregressive model. However, to the best of our knowledge, most of the research focuses on the inference of first-order statistics, such as the sample mean, but does not consider statistical inference on second-order statistics, like the sample autocovariances and autoregressive coefficients. In particular, there have been few works conducted on statistical inference of autoregressive time series with white noise innovations(which are not necessarily i.i.d.). This paper aims to address this gap in the literature.

\textbf{Our contribution: } Our work assumes that the white noise innovations $\boldsymbol{\epsilon}^{(t)}$ in eq.\eqref{eq.AR_model} satisfy the $(m,\alpha)-$short range dependent condition proposed in \cite{zhangPolitis} (also refer to \cite{MR3779697}), and the coefficient matrices $\mathbf{A}^{(j)}$ are sparse matrices. Under these assumptions, it employs the post-selection ``Lasso + OLS'' estimator introduced by \cite{MR3151764} to estimate $\mathbf{A}^{(j)}$ and establishes the (model-selection) consistency for the estimator. Furthermore, the paper derives the Gaussian approximation theorem for the two-stage estimator.

In the literature, such as \cite{MR1391173}, the innovations in eq.\eqref{eq.AR_model} are assumed to be i.i.d., so the asymptotic covariance matrix of the Yule-Walker estimator depends solely on the second-order moments of the data and has a closed form. However, \cite{zhang2023simultaneous} shows that this is not true when the innovations are dependent. Moreover, in the case of dependent innovations, the covariance matrix of the estimator can be too complex to have a closed-form expression, hindering statisticians from determining the width of the estimator's confidence interval. To solve this issue, this paper generalizes the second-order wild bootstrap introduced in \cite{zhang2023simultaneous} to vector time series. By employing the bootstrap algorithm, the width of the estimator's confidence interval can be derived without the need for manual calculations.

\textbf{The structure of the remaining parts: }The remainder of the paper is organized as follows: Section \ref{section.Setting} introduces the post-selection Lasso+OLS estimator for the coefficient matrices $\mathbf{A}^{(j)}$ along with the corresponding second-order wild bootstrap algorithm. Section \ref{section.m_alpha_dependent} reviews the properties of $(m,\alpha)-$short range dependent random variables.  Section \ref{section.Theoretical} presents the consistency and Gaussian approximation results for the estimator. Section \ref{section.numerical}
conducts numerical experiments to verify the finite-sample performance of the estimators.  Additionally, it explores researches the mutual relations of electricity generation in ASEAN countries using the proposed methods. Section \ref{section.conclusion} makes conclusions.  All proofs, technical calculations, as well as additional numerical simulation results, are postponed to the online Supplementary
Material.

\textbf{Notation: } This paper uses the standard order notation $O(\cdot), o(\cdot), O_p(\cdot)$, and $o_p(\cdot)$: For two numerical sequences $a_n, b_n, n = 1,2,\cdots$, we say $a_n = O(b_n)$ if there exists a constant $C>0$ such that $\vert a_n\vert\leq C\vert b_n\vert$ for $\forall n\in\mathbf{N}$; and $a_n = o(b_n)$ if $\lim_{n\to\infty} a_n/b_n = 0$. For two random variable sequences $X_n, Y_n$, we say $X_n = O_p(Y_n)$ if for any given $0 < \varepsilon < 1$, there exists a constant $C_\varepsilon>0$ such that $Prob\left(\vert X_n\vert\leq C_\varepsilon\vert Y_n\vert\right)\geq 1 - \varepsilon$ for any $n$; and $X_n = o_p(Y_n)$ if $X_n / Y_n\to_p 0$ where the latter denotes convergence in probability. Readers can refer to \cite{MR2002723} for a detailed introduction. For a random variable $X\in\mathbf{R}$, define its $m$ norm $\Vert X\Vert_m = \mathbf{E}(\vert X\vert^m)^{1/m}$, here $m\geq 1$. We use the notation $\wedge$ and $\vee$ to represent `minimum' and `maximum' respectively, i.e., $a\wedge b = \min(a,b)$ and $a\vee b = \max(a,b)$. We use $C,C_0,C_1,\cdots$, to represent general constants. However, their value may change from line to line.

In the remaining parts of this paper, bold lowercase (Greek) letters, such as $\mathbf{a}$ or $\boldsymbol \omega$, represent vectors; uppercase (Greek) letters, such as $\mathbf{A}$ or $\boldsymbol \Sigma$, represent matrices. Subscripts are used to represent elements in a vector or a matrix. For example, a vector $\mathbf{a}\in\mathbf{R}^n = (\mathbf{a}_1,\cdots, \mathbf{a}_n)^T$ and a matrix $\mathbf{A}\in\mathbf{R}^{m\times n} = \{\mathbf{A}_{ij}\}_{i = 1,\cdots, m, j = 1,\cdots, n}$. In addition, for a matrix $\mathbf{A}\in\mathbf{R}^{m\times n}$, define its $i$th row $\mathbf{A}_{i\cdot} = (\mathbf{A}_{i1},\cdots, \mathbf{A}_{in})$ and $j$th column $\mathbf{A}_{\cdot j} = (\mathbf{A}_{1j},\cdots, \mathbf{A}_{mj})^T$. We use the symbol ``$\top$'' to represent matrix transpose.

For a vector $\mathbf{a}\in\mathbf{R}^n$, define its $p$ norm ($1 \leq p< \infty$) to be $\vert\mathbf{a}\vert_p = \left(\sum_{i = 1}^n\vert\mathbf{a}_i\vert^p\right)^{1/p}$ and $\vert\mathbf{a}\vert_\infty = \max_{i = 1,\cdots, n}\vert\mathbf{a}_i\vert$. We also define $\vert\mathbf{a}\vert_0 = \sum_{i = 1}^n\mathbf{1}_{\mathbf{a}_i\neq 0}$, i.e., the number of non-zero elements in $\mathbf{a}$. However, $\vert\cdot\vert_0$ is not a norm since $\vert c\mathbf{a}\vert_\infty\neq c\vert\mathbf{a}\vert_\infty$ for a number $c\neq 0$. For a matrix $\mathbf{A}\in\mathbf{R}^{n\times n}$, define its spectral norm $\vert\mathbf{A}\vert_2 = \max_{\vert\mathbf{a}\vert_2 = 1}\vert\mathbf{A}\mathbf{a}\vert_2$. For a matrix $\mathbf{A}\in\mathbf{R}^{n\times m}$, define its maximum row sum norm $\vert\mathbf{A}\vert_{L_1} = \max_{i = 1,\cdots, n}\sum_{j = 1}^m\vert\mathbf{A}_{ij}\vert$ (see \cite{MR2978290}) and its infinity norm $\vert\mathbf{A}\vert_\infty = \max_{i = 1,\cdots, n, j = 1,\cdots, m}\vert\mathbf{A}_{ij}\vert$. For a matrix $\mathbf{A}\in\mathbf{R}^{n\times n}$ and a set $\mathcal{K} = \{1\leq k_1 < k_2<\cdots < k_s\leq n\}$, define the submatrix
\begin{align*}
\mathbf{A}_{\mathcal{K}} = \left[
\begin{matrix}
\mathbf{A}_{k_1k_1} & \mathbf{A}_{k_1k_2} & \cdots & \mathbf{A}_{k_1k_v}\\
\mathbf{A}_{k_2k_1} & \mathbf{A}_{k_2k_2} & \cdots & \mathbf{A}_{k_2k_v}\\
\vdots              & \vdots              & \cdots & \vdots\\
\mathbf{A}_{k_vk_1} & \mathbf{A}_{k_vk_2} & \cdots & \mathbf{A}_{k_vk_v}\\
\end{matrix}
\right],
\end{align*}
i.e., the matrix formed by selecting the $k_v$th rows and columns in matrix $\mathbf{A}$ where $k_v\in\mathcal{K}$.

\section{Setting and methodology}
\label{section.Setting}
Suppose the observations $\mathbf{x}^{(t)}\in\mathbf{R}^d, t = 1,\cdots, T$ are stemmed from the autoregressive process \eqref{eq.AR_model}.
Following the time series literature, we call the random vector $\boldsymbol{\epsilon}^{(t)}$ ``the innovations'',
and suppose that the innovations are white noises, meaning that
$\mathbf{E}\boldsymbol\epsilon^{(t)} = 0$, $\mathbf{E}\boldsymbol\epsilon^{(t)}\boldsymbol\epsilon^{(t)\top} = \boldsymbol\Sigma_{\boldsymbol\epsilon}$, and
$\mathbf{E}\boldsymbol\epsilon^{(t_1)}\boldsymbol\epsilon^{(t_2)\top} = 0$ for any $t_1\neq t_2$. However, we do not assume independence among the innovations.
This paper aims to establish a simultaneous confidence interval for the coefficient matrix $\mathbf{A}^{(j)}, j = 1,\cdots, p$, or to perform the exact test
$$
H_0:\ \mathbf{A}^{(j)} = \mathbf{A}^{(j)\dagger}\ \text{for } j = 1,\cdots, p\ \text{versus } H_1: \exists j\in\{1,\cdots, p\}\ \text{such that } \mathbf{A}^{(j)} \neq \mathbf{A}^{(j)\dagger}.
$$
Here $\mathbf{A}^{(j)\dagger}$ are given matrices, and there exists $j_0\in\{1,\cdots,p\}$ such that $\mathbf{A}^{(j_0)\dagger}\neq 0$. Define the matrices
\begin{align*}
\mathbf{W} = \left[
\begin{matrix}
\mathbf{x}^{(p)\top} & \mathbf{x}^{(p - 1)\top} & \cdots & \mathbf{x}^{(1)\top}\\
\mathbf{x}^{(p + 1)\top} & \mathbf{x}^{(p)\top} & \cdots & \mathbf{x}^{(2)\top}\\
\vdots                & \vdots            & \cdots & \vdots           \\
\mathbf{x}^{(T - 1)\top} & \mathbf{x}^{(T - 2)\top} & \cdots & \mathbf{x}^{(T - p)\top}\\
\end{matrix}
\right],\ \mathbf{S} = \left[
\begin{matrix}
\mathbf{A}^{(1)\top}\\
\mathbf{A}^{(2)\top}\\
\vdots\\
\mathbf{A}^{(p)\top}\\
\end{matrix}
\right]
\end{align*}
and the vectors
\begin{align*}
\mathbf{y}^{(l)} = (\mathbf{x}^{(p + 1)}_{l}, \mathbf{x}^{(p + 2)}_{l},\cdots, \mathbf{x}^{(T)}_l)^\top,\ \boldsymbol\eta^{(l)} = (\boldsymbol\epsilon^{(p + 1)}_l,\cdots, \boldsymbol\epsilon_l^{(T)})^\top\ \text{for } l = 1,\cdots, d,
\end{align*}
then eq.\eqref{eq.AR_model} implies
\begin{equation}
\mathbf{y}^{(l)} = \mathbf{W}\mathbf{S}_{\cdot l} + \boldsymbol\eta^{(l)},\ \text{equivalently } \mathbf{y}^{(l)}_{k} = \sum_{j = 1}^{pd}\mathbf{W}_{kj}\mathbf{S}_{jl} + \boldsymbol\eta^{(l)}_k\ \text{for } k = 1,\cdots, T - p,
\label{eq.linear_regression_form}
\end{equation}
which satisfies a linear model. With the help of this representation, statisticians can leverage the linear regression literature, such as the works by \cite{MR1946581}, \cite{MR3102549}, and \cite{MR3153940}, to estimate $\mathbf{S}$. However, many linear regression methods, along with the debiased techniques, rely on inverting the (adjusted) hat matrix $\mathbf{W}^\top\mathbf{W}$, and require that the (inversion of the) hat matrix follows certain sparse constraints, which is unrealistic and hard for a VAR model to satisfy.

Our work employs a  post-selection Lasso + OLS (referred to as post-selection) estimator to avoid directly inverting the singular hat matrix.
Following the method introduced by \cite{MR3151764},
the post-selection estimator uses Lasso to select the indices ordinate corresponding to non-zero elements in each column $\mathbf{S}_{\cdot l}$; then it fits  $\mathbf{y}^{(l)} $ on the selected indices through least-square estimation. The sparsity assumption ensures that only a few indices $\widetilde{\mathcal{B}}_l$ in each column  $\mathbf{S}_{\cdot l}$ are selected. After selection, statisticians can invert the sub-matrix $(\frac{1}{T}\mathbf{W}^\top\mathbf{W})_{\widehat{\mathcal{B}}_l}$ to calculate the least-square estimator. Since the number of non-zero indices in $\mathbf{S}_{\cdot l}$ is very small compared to the sample size $T$, the sub-matrix $(\frac{1}{T}\mathbf{W}^\top\mathbf{W})_{\widehat{\mathcal{B}}_l}$ is always invertible, ensuring that the post-selection estimator is well-defined.

The post-selection estimation follows a two-step procedure. In the first step, we define the Lasso estimator $\widetilde{\mathbf{S}}$ whose columns satisfy
\begin{equation}
\widetilde{\mathbf{S}}_{\cdot l} = \argmin_{\mathbf{s}\in\mathbf{R}^{pd}} \frac{1}{2T}\vert \mathbf{y}^{(l)} - \mathbf{W}\mathbf{s}\vert_2^2 + \lambda \vert\mathbf{s}\vert_1.
\label{eq.use_lasso}
\end{equation}
Our work uses the same $\lambda$ for different $l$ in eq.\eqref{eq.use_lasso} for simplicity and to avoid tuning too many parameters. However, this constraint is unnecessary. Statisticians may choose different $\lambda$ for different $l$ when performing Lasso.

Calculating the Lasso estimator relies on numerical optimization algorithms, and the computation errors inherent in these algorithms
always leave $\widetilde{\mathbf{S}}_{\cdot l}$ to have few exactly $0$ elements
in spite of the sparsity assumption. We demonstrate this in Table \ref{table.numerical_res}.
To maintain sparsity in the estimator, we introduce a threshold $b_T$ and select the elements in $\widetilde{\mathbf{S}}_{\cdot l}$ whose absolute
values are greater than $b_T$, forming the following sets
\begin{equation}
\widetilde{\mathcal{B}}_l = \left\{j = 1,\cdots, pd: \vert \widetilde{\mathbf{S}}_{j l}\vert > b_T\right\}.
\label{eq.def_Bl}
\end{equation}
Before moving to the second step, we present the partial inverse operator $\mathcal{F}_{\cdot}(\cdot)$, whose mathematical properties will be explored in the online supplement.

\begin{definition}[Partial inverse operator $\mathcal{F}_{\cdot}(\cdot)$]
Suppose a matrix $\mathbf{A}\in\mathbf{R}^{d \times d}$ and a set $\mathcal{B} = \{1\leq k_1 < k_2 < \cdots < k_v\leq d\}$, do the following operations:

i. Choose the $k_s, s = 1,\cdots, v$th row and column of $\mathbf{A}$ and form a new matrix $\mathbf{A}_{\mathcal{B}}$.

ii. Calculate the Moore - Penrose pseudo-inverse matrix of $\mathbf{A}_{\mathcal{B}}$, and denote the matrix by $\mathbf{Z}$.

iii. Define the matrix $\mathcal{F}_{\mathcal{B}}(\mathbf{A})$ as follows:
\begin{equation}
\mathcal{F}_{\mathcal{B}}(\mathbf{A}) = \mathbf{C}\in\mathbf{R}^{d\times d},\ \text{here } \mathbf{C}_{ij} =
\begin{cases}
\mathbf{Z}_{st}\ \text{if } i = k_s\ \text{and } j = k_t\\
0\ \text{otherwise}
\end{cases}.
\label{eq.matrix_C}
\end{equation}
\label{definition.partial_inverse}
\end{definition}
As demonstrated in the online supplement, the operator $\mathcal{F}_{\cdot}(\cdot)$ satisfies
$\mathcal{F}_{\mathcal{B}}(\mathbf{A})\mathbf{A}\boldsymbol\beta = \boldsymbol\beta$ if $\boldsymbol\beta_i = 0$ for all $i\not\in\mathcal{B}$.
This property explains the reason for calling $\mathcal{F}_{\cdot}(\cdot)$ ``the partial inverse operator'', as
$\mathcal{F}_{\mathcal{B}}(\mathbf{A})$ behaves the same as an inverse matrix when acting on the sparse vector $\boldsymbol\beta$.

With the help of the partial inverse operator, we can present the post-selection inference estimator $\widehat{\mathbf{S}}$, which is demonstrated as follows:
\begin{equation}
\widehat{\mathbf{S}}_{\cdot l}  = \mathcal{F}_{\widetilde{\mathcal{B}}_{l}}(\frac{1}{T}\mathbf{W}^\top\mathbf{W})\frac{1}{T}\mathbf{W}^\top\mathbf{y}^{(l)},\ \text{here } l = 1,\cdots, d.
\label{eq.post_selection}
\end{equation}
Clearly, by replacing $\mathcal{F}_{\widetilde{\mathcal{B}}_{l}}(\frac{1}{T}\mathbf{W}^\top\mathbf{W})$ with the inverse matrix
$(\frac{1}{T}\mathbf{W}^\top\mathbf{W})^{-1}$,
eq.\eqref{eq.post_selection} becomes the usual least-square estimator.
However, the number of parameters in eq.\eqref{eq.linear_regression_form} may exceed the sample size,
causing the matrix $\mathbf{W}^\top\mathbf{W}$ to be non-invertible. Therefore, we need the partial inverse operator to make the estimator well-defined. From the definition of $\mathbf{S}$, we have the estimator for the coefficient matrix
\begin{equation}
  \left[
  \begin{matrix}
    \widehat{\mathbf{A}}^{(1)} & \widehat{\mathbf{A}}^{(2)} &\cdots & \widehat{\mathbf{A}}^{(p)} \\
  \end{matrix}
  \right] = \widehat{\mathbf{S}}^{\top},\ \text{specifically } \widehat{\mathbf{A}}^{(k)}_{ij} = \widehat{\mathbf{S}}^{\top}_{i((k-1)\times d + j)}
  = \widehat{\mathbf{S}}_{((k-1)\times d + j)i}.
  \label{eq.def_A_hat}
\end{equation}

\textbf{Bootstrap-assisted confidence interval \& hypothesis testing: } Originating from \cite{MR0515681}, the bootstrap algorithm has become a prevalent tool
for dealing with
estimators with hard-to-calculate variances. The seminal work by \cite{MR1197789, MR1310224}, among others, demonstrated the effectiveness of bootstrap algorithms
in analyzing time series. Statisticians have developed variants of bootstrap algorithms to analyze different kinds of time series and estimators. Examples include \cite{MR1466304}
and \cite{MR3346699} for linear process; \cite{MR4134800} for spectral statistics; \cite{MR3299408} for locally stationary time series. \cite{MR2656050} introduced
the dependent wild bootstrap for stationary  time series, which was later found to be useful in dealing with non-stationary time series as well, see the work of
\cite{zhangPolitis} and \cite{zhang2023simultaneous} for a further illustration.

A key challenge in the analysis of VAR time series is the presence of complicated spatial and temporal correlations. Those correlations are conveyed to the estimators of
the time series, which lead to complex variance and covariance structures in the estimators. As demonstrated by \cite{MR4561044}, even when the
innovations are i.i.d., the autoregressive coefficient estimator still exhibits complex covariance structures.
Moreover, our work does not require the innovations
to be independent and strictly stationary. Consequently,
the covariance matrix  of the post-selection estimator not only relies on the innovations' covariance matrix $\boldsymbol\Sigma_{\boldsymbol\epsilon}$, but also
depends on their fourth-order cumulants; and the cumulants of $\boldsymbol\epsilon^{(t)}$ may vary significantly for different $t$. Confronted with this issue,
our work leverages the power of bootstrap algorithm and develops a bootstrap algorithm, called ``second-order wild bootstrap'', that facilitates statisticians
in analyzing the
VAR time series. The detailed procedure of the second - order wild bootstrap is presented in Algorithm \ref{algorithm.bootstrap}.

\begin{algorithm}[Second-order wild bootstrap]
\label{algorithm.bootstrap}
  \textbf{Input: } Observations $\mathbf{x}^{(t)}\in\mathbf{R}^d, t = 1,\cdots, T$ stemmed from eq.\eqref{eq.AR_model}; the Lasso parameter $\lambda$; the threshold $b_T$; the kernel bandwidth $k_T$; and a kernel function $K(\cdot)$; the nominal coverage probability $ 1 - \alpha$; the number of bootstrap replicates $B$.

  \textbf{Additional input for exact test: } the expected parameter matrices $\mathbf{A}^{(j)\dagger}, j  = 1,\cdots, p$.

  1. Derive the Lasso estimator $\widetilde{\mathbf{S}}$ as in eq.\eqref{eq.use_lasso}, the set $\widetilde{\mathcal{B}}_l$, the post-selection estimator $\widehat{\mathbf{S}}$ as in eq.\eqref{eq.post_selection}, and the estimator $\widehat{\mathbf{A}}_{ij}^{(k)}$ as in eq.\eqref{eq.def_A_hat}. After that, concatenate $\mathbf{x}^{(t)}$ into
  \begin{equation}
  \mathbf{z}^{(t)} = (\mathbf{x}^{(t)\top}, \mathbf{x}^{(t - 1)\top}, \cdots, \mathbf{x}^{(t - p + 1)\top})^\top\ \text{for } t = p, p+1,\cdots, T
  \label{eq.def_Z}
  \end{equation}

  2. Define the matrix $\widehat{\boldsymbol\Sigma}^{(1)}$ and $\widehat{\mathbf{S}}^\dagger$ as follows
  \begin{align*}
  \widehat{\boldsymbol\Sigma}^{(1)} = \frac{1}{T}\mathbf{W}^\top
  \left[
      \begin{matrix}
      \mathbf{z}^{(p+1)\top}\\
      \mathbf{z}^{(p + 2)\top}\\
      \vdots\\
      \mathbf{z}^{(T)\top}\\
\end{matrix}
\right],\ \widehat{\mathbf{S}}^\dagger = \left[
\begin{matrix}
\widehat{\mathbf{S}} & \mathbf{I}_d & 0             & 0             & \cdots & 0\\
                     & 0            & \mathbf{I}_d  & 0             & \cdots & 0\\
                     & 0            & 0             & \mathbf{I}_d  & \cdots & 0\\
                     & \vdots       & \vdots        & \vdots        & \cdots & \vdots\\
                     & 0            & 0             & 0             & \cdots & \mathbf{I}_d\\
                     & 0            & 0             & 0             & \cdots & 0\\
\end{matrix}
\right],
  \end{align*}
here $\mathbf{I}_d$ are $d\times d$ identity matrix. After that, calculate the lag 1 second-order residuals
\begin{align*}
  \widehat{\boldsymbol\Theta}^{(t)} = \mathbf{z}^{(t)}\mathbf{z}^{(t + 1)\top} - \mathbf{z}^{(t)}\mathbf{z}^{(t)\top}
  \widehat{\mathbf{S}}^\dagger\ \text{for } t = p,\cdots, T - 1.
\end{align*}
  3. Generate joint normal random variables $e^{(t)}, t = 1,\cdots, T$ such that $\mathbf{E}e^{(t)} = 0$ and $\mathbf{E}e^{(t_1)}e^{(t_2)} = K\left(\frac{t_1 - t_2}{k_T}\right)$. After that, calculate the resampled covariance matrix
  \begin{align*}
  \widehat{\boldsymbol \Sigma}^{(1)*} = \widehat{\boldsymbol\Sigma}^{(1)} + \frac{1}{T}\sum_{t = p}^{T - 1}\widehat{\boldsymbol\Theta}^{(t)}\times e^{(t)}.
  \end{align*}
  4. Derive the bootstrapped estimator $\widehat{\mathbf{S}}^{*}$ and the bootstrapped estimation root as follows:
  \begin{align*}
    \widehat{\mathbf{S}}^*_{\cdot l} = \mathcal{F}_{\widetilde{\mathcal{B}}_{l}}(\frac{1}{T}\mathbf{W}^\top\mathbf{W})\widehat{\boldsymbol \Sigma}^{(1)*}_{\cdot l}
    \ \text{and } \boldsymbol\Delta^*_{il} = \sqrt{T}(\widehat{\mathbf{S}}^*_{i l} - \widehat{\mathbf{S}}_{il}),
  \end{align*}
  here $i\in \widetilde{\mathcal{B}}_{l}$, $l = 1,\cdots, d$.
  Then Calculate
  $$
  \psi^*_b = \max_{l = 1,\cdots, d, i = 1,\cdots, pd}\vert\boldsymbol\Delta^*_{il}\vert.
  $$
  5. Repeat step 3 - 4 for $b = 1,\cdots, B$. After that, calculate the $1 - \alpha$ sample quantile, i.e., sort $\psi^*_b$ into $\psi^*_{(1)}\leq \psi^*_{(2)}\leq \cdots \leq \psi^*_{(B)}$, then
  $$
  C^*_{1 - \alpha} = \psi^*_{(k)}\ \text{such that } k = \min\{s = 1,\cdots, B: \frac{s}{B}\geq 1 - \alpha\}.
  $$
  6.a (Constructing simultaneous confidence interval) The $1 - \alpha$ simultaneous confidence interval for $\mathbf{A}^{(l)}, l = 1,\cdots, p$ is given by the inequality
  $$
  \vert\mathbf{A}^{(l)}_{ij} - \widehat{\mathbf{A}}_{ij}^{(l)}\vert\leq \frac{C^*_{1 - \alpha}}{\sqrt{T}}
  $$
  6.b (Performing exact test) Reject the null hypothesis if
  $$
  \max_{l = 1,\cdots, p, i,j = 1,\cdots, d}\sqrt{T}\vert\widehat{\mathbf{A}}_{ij}^{(l)} - \mathbf{A}_{ij}^{(l)\dagger}\vert > C^*_{1-\alpha}
  $$
\end{algorithm}

\begin{remark}[Why we do not use studentization]
A common way for constructing simultaneous confidence intervals, as demonstrated by \cite{MR3779709}, is through ``studentization'', which incorporates the
estimated standard deviations in the confidence interval construction. However, the studentization
is not well-suited for the post-selection estimator. To illustrate this, notice that the post-selection estimator only selects a few indices for least-square
estimation and estimates the other parameters to be $0$, which results in a 0 standard deviation and consequently a studentized confidence interval with width 0.
However, the model selector, which in our work is chosen to be the Lasso, cannot guarantee the identification of all non-zero parameters in the
underlying model for a given data set.  Furthermore, if the
model selector considers a parameter $\mathbf{A}_{ij}^{(l)}$ to be 0 and truncates it while it is, in fact, not $0$, then the real $\mathbf{A}_{ij}^{(l)}$
cannot be encompassed by the studentized confidence interval, which is unsatisfactory.

On the contrary, the confidence interval in algorithm \ref{algorithm.bootstrap} has positive widths for all parameters $\mathbf{A}_{ij}^{(l)}$.
Despite the fact that the model selector may not be able to identify all non-zero parameters,
the positive widths in the confidence interval still allows it to encompass the non-zero parameters
that were incorrectedly truncated by the model selector.
Therefore, the confidence interval in algorithm \ref{algorithm.bootstrap} exhibits greater robustness to model misspecification compared to the studentized confidence intervals.
\end{remark}

To illustrate how the second-order wild bootstrap algorithm generates correct simultaneous confidence intervals,
we define the conditional expectation $\mathbf{E}^*\cdot = \mathbf{E}\cdot|\mathbf{x}^{(t)}, t = 1,\cdots, T$,
often referred to as ``the expectation in the bootstrap world''. Suppose $\widetilde{\mathcal{B}}_l = \mathcal{B}_l$, then the conditional covariances of $\boldsymbol\Delta_{jl}^*$ is
\begin{equation}
\begin{aligned}
\mathbf{E}^*\boldsymbol\Delta_{j_1l_1}^*\boldsymbol\Delta_{j_2l_2}^* = \frac{1}{T}\sum_{q_1 = 1}^{pd}\sum_{q_2 =  1}^{pd}\sum_{t_1 = p}^{T - 1}\sum_{t_2 = p}^{T - 1}
\mathcal{F}_{\mathcal{B}_{l_1}}(\widehat{\boldsymbol\Sigma}^{(0)})_{j_1q_1}\mathcal{F}_{\mathcal{B}_{l_2}}(\widehat{\boldsymbol\Sigma}^{(0)})_{j_2q_2}\\
\times
\widehat{\boldsymbol\Theta}^{(t_1)}_{q_1l_1}\widehat{\boldsymbol\Theta}^{(t_2)}_{q_2l_2}K\left(\frac{t_1 - t_2}{k_T}\right)
\end{aligned}
\end{equation}
if $j_1\in\mathcal{B}_{l_1}$ and $j_2\in\mathcal{B}_{l_2}$ and $0$ otherwise.
As demonstrated in the online supplement,
the above formula converges in probability to the covariances of the estimation root
$
\sqrt{T}\vert\widehat{\mathbf{A}}_{ij}^{(l)} - \mathbf{A}_{ij}^{(l)}\vert
$. Therefore, the bootstrap algorithm \ref{algorithm.bootstrap} implicitly assists statisticians in estimating the covariance structure of the estimation root.
In addition, Theorem \ref{theorem.Gaussian_approx_} in Section \ref{section.Theoretical} shows that the estimation root converges in distribution to the maximum
of joint normal random variables with specific covariances. However,
unlike the classical T-test or F-test, statisticians do not have a quantile table
for the maximum of joint normal random variables.
Confronted with such circumstance, the Monte-Carlo methods, including bootstrap algorithms, which calculate the quantiles through simulations,
always provide viable solutions.

The kernel function $K(\cdot)$ in Algorithm \ref{algorithm.bootstrap} needs to satisfy Definition \ref{definition.kernel}. A simple example of a satisfactory
kernel function is $\sqrt{2\pi}$ times the standard normal density, i.e.,
$
K(x) = \exp(-\frac{x^2}{2}).
$

\section{$(m,\alpha)-$short range dependent random variables}
\label{section.m_alpha_dependent}
This section sets up constraints on the form of dependent and non-stationary time series for discussion in our work. Suppose $e_t, t\in \mathbf{Z}$ are independent (but not necessarily identically distributed) random variables, and random vectors
$\boldsymbol \epsilon^{(t)} = (\boldsymbol \epsilon^{(t)}_1, \cdots, \boldsymbol \epsilon^{(t)}_d)^\top\in\mathbf{R}^d$ satisfy
\begin{equation}
\boldsymbol \epsilon^{(t)}_i = g^{(t, T)}_i(\cdots, e_{t - 2}, e_{t - 1}, e_t),\ \text{here }\  i = 1,2,\cdots, d\ \text{and } t\in\mathbf{Z}.
\label{eq.epsilon_t}
\end{equation}
The function $g^{(t, T)}_i$ in eq.\eqref{eq.epsilon_t} is a measurable function for each $i$. The superscript $t$ and $T$ indicate that the function
$g^{(\cdot,\cdot)}$ can vary with respect to the time label $t$ and the number of observations $T$. Since the marginal and joint distribution
of $\boldsymbol \epsilon^{(t)}_i$ depend on the function $g^{(t, T)}_i$, the different choices of the function $g^{(t, T)}_i$ allows for the presence of
non-stationarity. \cite{zhangPolitis} introduced a form of non-stationary random variables named ``($m,\alpha$)-''short range dependent random variables.
Building on this concept, \cite{zhang2023simultaneous} further extended their results to perform statistical inference on second-order statistics of
scalar non-stationary time series. \cite{MR3779697}, \cite{MR2827528}, and \cite{MR4206676} utilized similar non-stationarity assumptions
in their work, but they did not focus on performing statistical inference on second-order statistics of non-stationary time series.
Our work also adopts the framework introduced by \cite{zhangPolitis} to model high-dimensional time series.

\begin{remark}
Discussion of  the form \eqref{eq.epsilon_t} dates back to Wiener's conjecture, which was later proven to be false by
\cite{MR2493017}. \cite{MR2172215} introduced the physical dependence measure for this form, which served as a useful
tool for analyzing nonlinear time series. Applications of this form include the works of \cite{MR2351105}, \cite{MR2915091}, and
\cite{MR3235390}. Recent research tends to extend the form \eqref{eq.epsilon_t} to more complex settings; readers can refer to the work of \cite{MR3310530}
and \cite{MR3779697} for examples.
\end{remark}

Define $e_t^\dagger$ as independent random variables such that $e_{t_1}^{\dagger}$ is independent of $e_{t_2}$ for any $t_1, t_2\in\mathbf{Z}$; and $e_{t}^\dagger$ has the same distribution as $e_t$. For any $j = 0,1,2,\cdots, $ define the random variables
\begin{equation}
\boldsymbol \epsilon^{(t, j)}_i =
\begin{cases}
g^{(t, T)}_i(\cdots,e_{t - j - 2},e_{t - j - 1},e_{t - j}^\dagger,e_{t - j + 1}\cdots, e_{t - 1}, e_t)\ \text{if } j\geq 0\\
\boldsymbol \epsilon^{(t)}_i\ \text{if } j < 0
\end{cases}
\label{eq.replace_epsilon}
\end{equation}
Then define the term
$$
\delta_i^{(t, j)} = \Vert \boldsymbol \epsilon^{(t)}_i - \boldsymbol \epsilon^{(t, j)}_i\Vert_m\ \text{and } \delta^{(j)} = \sup_{i = 1,\cdots, d, t\in\mathbf{Z}}\delta_i^{(t,j)}
$$
Clearly, $\delta_i^{(t, j)} = \delta^{(j)} = 0$ if $j<0$. With a slight abuse of notation, we omit $m$ here since it remains unchanged.
With these notations, we can define the $(m,\alpha)-$short range dependent random variables.
\begin{definition}[$(m,\alpha)-$short range dependent random variables]
Suppose $\boldsymbol \epsilon^{(t)}\in\mathbf{R}^d, t \in\mathbf{Z}$ are random vectors satisfying

1. $\mathbf{E}\boldsymbol\epsilon^{(t)} = 0$ for $t \in\mathbf{Z}$,

2. $\sup_{t\in\mathbf{Z}, i = 1,\cdots, d}\Vert \boldsymbol \epsilon^{(t)}_i\Vert_m = O(1)$,

3. $\sup_{k = 0,1,\cdots,}(1 + k)^\alpha\sum_{j = k}^\infty \delta^{(j)} = O(1)$.

\noindent Then we call  $\boldsymbol\epsilon^{(t)}$ the $(m,\alpha)-$short range dependent random variables. Here the constants $m,\alpha > 1$.
\label{definition.short_range}
\end{definition}

\begin{remark}
  In their work, \cite{MR3779697} considered similar settings, where $\delta^{(j)}$ was assumed to have exponential decay with respect to $j$. However,
  they only considered the sample mean of vector time series and did not estimate the covariance matrices of the estimators. Our work,
  on the other hand, focuses on
  estimating the autoregressive coefficient matrix, which is a second-order statistics with variances dependent on the fourth-order cumulants of the time series.
  Moreover, we provides statisticians with a consistent estimation procedure for the covariance matrices of the autoregressive coefficient matrix, which helps
  them establish simultaneous confidence intervals.
\end{remark}

Definition \ref{definition.short_range} is quite inclusive and covers various frequently-used vector time series models,
including non-stationary linear processes, random coefficient autoregressive process, among others.
Readers can access \cite{MR3779697} for more examples.

The first lemma involves arithmetic operations on $(m,\alpha)-$short range dependent random variables.
\begin{lemma}
Suppose $\boldsymbol\epsilon^{(t)}\in\mathbf{R}^d, t \in\mathbf{Z}$ are $(m,\alpha)-$short range dependent random variables with $m\geq 4$.

1. Suppose $p = O(1)$ is an integer. Define
$\boldsymbol \eta^{(t)} = (\boldsymbol\epsilon^{(t)\top}, \boldsymbol\epsilon^{(t - 1)\top}, \boldsymbol\epsilon^{(t - p + 1)\top})^\top\in\mathbf{R}^{pd}$, here $t\in\mathbf{Z}$. Then $\boldsymbol \eta^{(t)}$ are $(m,\alpha)-$short range dependent random variables.

2. Define $\boldsymbol \gamma^{(t)}, \boldsymbol \zeta^{(t)}\in\mathbf{R}^{d^2}$ such that
$$
\boldsymbol \gamma^{(t)}_{(i - 1)\times d + j} = \boldsymbol\epsilon^{(t)}_i\boldsymbol\epsilon^{(t)}_j - \mathbf{E}\boldsymbol\epsilon^{(t)}_i\boldsymbol\epsilon^{(t)}_j
\ \text{and } \boldsymbol \zeta^{(t)}_{(i - 1)\times d + j} = \boldsymbol\epsilon^{(t - 1)}_i \boldsymbol\epsilon^{(t)}_j - \mathbf{E}\boldsymbol\epsilon^{(t - 1)}_i \boldsymbol\epsilon^{(t)}_j
$$
then $\boldsymbol \gamma^{(t)}$ and $\boldsymbol \zeta^{(t)}$ are $(m/2,\alpha)-$short range dependent random variables.

3. Suppose the matrix $\mathbf{L}\in\mathbf{R}^{c\times d}$ satisfies $\vert\mathbf{L}\vert_{L_1} = O(1)$, then the vector $\mathbf{L}\boldsymbol\epsilon^{(t)}$ are $(m,\alpha)-$short range dependent random variables.

4. Suppose the matrix $\mathbf{L}\in\mathbf{R}^{d\times d}$ satisfies $\vert\mathbf{L}\vert_2\leq \rho < 1$ (which is the stability condition in \cite{MR1238940}) and $\rho^2\max_{i = 1,\cdots, d}\vert\mathbf{L}_{i\cdot}\vert_0 < 1$. In addition, suppose $\boldsymbol\epsilon^{(t)}$ are white noise, i.e.,
$$
\mathbf{E}\boldsymbol\epsilon^{(t)} = 0\ \text{and }  \mathbf{E}\boldsymbol\epsilon^{(t_1)}\boldsymbol\epsilon^{(t_2)T} =
\begin{cases}
0\ \text{if } t_1\neq t_2\\
\boldsymbol \Sigma_{\boldsymbol\epsilon}\ \text{if } t_1 = t_2
\end{cases}
$$
Define the weak stationary VAR(1) process as follows:
$$
\boldsymbol\beta^{(t)} = \mathbf{L}\boldsymbol\beta^{(t - 1)} + \boldsymbol\epsilon^{(t)}
$$
then $\boldsymbol\beta^{(t)}$ are $(m,\alpha)-$short range dependent random variables.
\label{lemma.operation}
\end{lemma}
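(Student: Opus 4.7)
The plan is to verify the three defining conditions (zero mean, uniformly bounded $\Vert\cdot\Vert_m$, and polynomial decay of $\sum_{j\ge k}\delta^{(j)}$) for each of the four constructed processes. Zero mean is immediate in all four parts (with the mean subtracted by definition in part 2), so the bulk of the work is to produce a sharp physical-dependence bound for each construction and then inherit both the norm bound and the summability of $\delta^{(j)}$ from those of $\boldsymbol\epsilon^{(t)}$.

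For part 1, I would note that replacing $e_{t-j}$ inside $\boldsymbol\eta^{(t)}=(\boldsymbol\epsilon^{(t)\top},\ldots,\boldsymbol\epsilon^{(t-p+1)\top})^\top$ amounts to replacing the $(j-k)$-th innovation of each block $\boldsymbol\epsilon^{(t-k)}$ for $0\le k\le p-1$; hence $\delta_{\boldsymbol\eta}^{(j)}\le\sum_{k=0}^{p-1}\delta^{(j-k)}$. Summing from $K$ onward and shifting the inner index gives $\sum_{j\ge K}\delta_{\boldsymbol\eta}^{(j)}=O((1+K)^{-\alpha})$ because $p=O(1)$. For part 2, the telescoping identity
$$\boldsymbol\epsilon^{(t,j)}_{a}\boldsymbol\epsilon^{(t,j)}_{b}-\boldsymbol\epsilon^{(t)}_{a}\boldsymbol\epsilon^{(t)}_{b}=\boldsymbol\epsilon^{(t,j)}_{a}\bigl(\boldsymbol\epsilon^{(t,j)}_{b}-\boldsymbol\epsilon^{(t)}_{b}\bigr)+\bigl(\boldsymbol\epsilon^{(t,j)}_{a}-\boldsymbol\epsilon^{(t)}_{a}\bigr)\boldsymbol\epsilon^{(t)}_{b}$$
combined with Hölder's inequality ($\Vert XY\Vert_{m/2}\le\Vert X\Vert_m\Vert Y\Vert_m$) yields $\delta_{\boldsymbol\gamma}^{(j)}=O(\delta^{(j)})$, and the same argument applied to products of $\boldsymbol\epsilon^{(t-1)}$ and $\boldsymbol\epsilon^{(t)}$ gives $\delta_{\boldsymbol\zeta}^{(j)}=O(\delta^{(j)}+\delta^{(j-1)})$; both tail-sum at the same rate $\alpha$. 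Part 3 is immediate from Minkowski: $\Vert(\mathbf{L}(\boldsymbol\epsilon^{(t)}-\boldsymbol\epsilon^{(t,j)}))_i\Vert_m\le|\mathbf{L}|_{L_1}\delta^{(j)}$, and the uniform norm bound is analogous.

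Part 4 is the substantive step. I would use the Wold expansion $\boldsymbol\beta^{(t)}=\sum_{k=0}^\infty\mathbf{L}^k\boldsymbol\epsilon^{(t-k)}$, so that both the $\Vert\cdot\Vert_m$ bound and the physical-dependence coefficients reduce to controlling $\sum_\ell|(\mathbf{L}^k)_{i\ell}|$ for each row $i$. The main obstacle is that the spectral bound $|\mathbf{L}|_2\le\rho$ controls only $\ell_2$-norms of rows, whereas Minkowski requires the row $\ell_1$-norm, and the naive bound $|\mathbf{L}^k|_{L_1}\le|\mathbf{L}|_{L_1}^k$ is useless because $|\mathbf{L}|_{L_1}$ can easily exceed $1$. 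The key observation is that row sparsity propagates through matrix powers: since each row of $\mathbf{L}$ has at most $s:=\max_i|\mathbf{L}_{i\cdot}|_0$ nonzeros, the $i$-th row of $\mathbf{L}^k$ has support of size at most $s^k$. Cauchy--Schwarz combined with $|(\mathbf{L}^k)_{i\cdot}|_2\le|\mathbf{L}^k|_2\le\rho^k$ then gives
$$\sum_\ell|(\mathbf{L}^k)_{i\ell}|\le\sqrt{s^k}\,\rho^k=(\rho^2 s)^{k/2},$$
which is geometrically decaying precisely because of the hypothesis $\rho^2 s<1$. This immediately yields the uniform $\Vert\cdot\Vert_m$ bound for $\boldsymbol\beta^{(t)}_i$ and the convolution bound $\delta_{\boldsymbol\beta}^{(j)}\le\sum_{k=0}^j(\rho^2 s)^{k/2}\delta^{(j-k)}$. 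Summing over $j\ge K$, interchanging the order of summation, and splitting the inner sum at $k=K/2$ (the geometric factor handles $k>K/2$ while the polynomial tail of $\delta^{(\cdot)}$ handles $k\le K/2$) produces the desired $O((1+K)^{-\alpha})$ decay.
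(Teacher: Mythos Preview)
Your proposal is correct and follows essentially the same route as the paper's proof: the same telescoping/H\"older argument for part~2, the same Minkowski bound for parts~1 and~3, and in part~4 the same convolution estimate $\delta_{\boldsymbol\beta}^{(j)}\le\sum_{k\le j}(\rho^2 s)^{k/2}\delta^{(j-k)}$ followed by the split at $k=K/2$. The only cosmetic difference is in how the factor $(\rho^2 s)^{k/2}$ is obtained: you bound $|(\mathbf{L}^k)_{i\cdot}|_1$ directly by combining the row-sparsity propagation $|(\mathbf{L}^k)_{i\cdot}|_0\le s^k$ with $|(\mathbf{L}^k)_{i\cdot}|_2\le|\mathbf{L}^k|_2\le\rho^k$, whereas the paper iterates the one-step estimate $\|(\mathbf{L}\mathbf{y})_i\|_m\le\rho\sqrt{s}\,\max_j\|\mathbf{y}_j\|_m$; both produce the same geometric factor.
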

\begin{remark}
The condition that $\vert\mathbf{L}\vert_2 \leq \rho < 1$ in lemma \ref{lemma.operation} is common in the multivariate time series literature , such as \cite{MR4206676}.
Especially, according to section 2.2 in \cite{MR1238940}, $\boldsymbol\beta^{(t)}$ has a moving average representation
$$
\boldsymbol\beta^{(t)}  = \boldsymbol\epsilon^{(t)} + \sum_{k = 1}^\infty\mathbf{L}^k\boldsymbol\epsilon^{(t - k)},
$$
so this condition is a must for $\boldsymbol{\beta}^{(t)}$ to have a stationary covariance structure.

\end{remark}
The remaining parts of this section involve deriving the consistency result,
the Gaussian approximation theorem, and the covariances estimation for the $(m,\alpha)-$short range dependent random
variables. Gaussian approximation, as presented in \cite{MR3161448}, \cite{MR3718156}, \cite{MR4500619}, \cite{chang2023central}, among others,
is a useful technique for analyzing high dimensional data.  Before presenting our work, we introduce the definition of a kernel function,
which will be used to construct spatial and temporal covariances estimators in Theorem \ref{theorem.MA}.
\begin{definition}[Kernel function]
Suppose function $K(\cdot): \mathbf{R}\to [0,\infty)$ is symmetric, continuously differentiable, $K(0) = 1$, $\int_{\mathbf{R}} K(x)dx < \infty$, and $K(x)$ is decreasing on $[0,\infty)$. Besides, define the Fourier transform $\mathcal{F}K(x) = \int_{\mathbf{R}} K(t)\exp(-2\pi\mathrm{i}\times tx)dt$, here $\mathrm{i} = \sqrt{-1}$. Assume $\mathcal{F}K(x)\geq 0$ for all $x\in\mathbf{R}$ and $\int_{\mathbf{R}} \mathcal{F}K(x)dx < \infty$. We call $K$ the kernel function.
\label{definition.kernel}
\end{definition}

\begin{theorem}
Suppose $\boldsymbol \epsilon^{(t)}\in\mathbf{R}^d, t\in\mathbf{Z}$ are $(m,\alpha)-$short range dependent random variables with $m>4,\alpha > 1$, and $d = O(T^{\alpha_d})$, here  $0\leq \alpha_d < \frac{m\alpha}{1 + 12\alpha}$.

1. Define the matrix
$$
\boldsymbol \Pi = \{\boldsymbol \epsilon^{(j)}_i\}_{i = 1,\cdots, d, j = 1,\cdots, T}\in\mathbf{R}^{d\times T}\ \text{and }\ \boldsymbol \Pi^{(s)} = \{\boldsymbol \epsilon^{(j)}_i - \mathbf{E}\boldsymbol \epsilon^{(j)}_i|\mathcal{F}^{(t, s)}\}_{i = 1,\cdots, d, j = 1,\cdots, T},
$$
then here exists a constant $C > 0$ such that for any vector $\mathbf{b}\in\mathbf{R}^T$,
\begin{equation}
\Vert\ \vert\boldsymbol \Pi\mathbf{b}\vert_\infty\ \Vert_m\leq Cd^{1/m}\times\vert\mathbf{b}\vert_2\ \text{and } \Vert\ \vert\boldsymbol \Pi^{(s)}\mathbf{b}\vert_\infty\ \Vert_m
\leq Cd^{1/m}\times \vert\mathbf{b}\vert_2\times (1 + s)^{-\alpha}.
\end{equation}
Here $\mathcal{F}^{(t, s)}$ is the $\sigma-$field generated by $e_t, e_{t - 1},\cdots, e_{t - s}$.

2. Define the sample mean $\overline{\boldsymbol\epsilon} = \frac{1}{T}\sum_{t = 1}^T\boldsymbol \epsilon^{(t)}$. Suppose $\exists $
a constant $c>0$ such that
$$
Var(\sqrt{T}\overline{\boldsymbol\epsilon}_i)\geq c\ \text{for } i = 1,\cdots, d,
$$
then we have
\begin{equation}
\sup_{x\in\mathbf{R}}\vert
Prob\left(\vert\sqrt{T}\overline{\boldsymbol\epsilon}\vert_\infty\leq x\right) - Prob\left(\vert\boldsymbol \xi\vert_\infty \leq x\right)
\vert = o(1),
\end{equation}
here $\boldsymbol\xi\in\mathbf{R}^d$ have joint normal distribution such that $\mathbf{E}\boldsymbol\xi = 0$ and $Cov(\boldsymbol \xi_i, \boldsymbol \xi_j) = T\times Cov(\overline{\boldsymbol\epsilon}_i, \overline{\boldsymbol\epsilon}_j)$.

3. Suppose $K(\cdot)$ is a kernel function as in definition \ref{definition.kernel}, and the bandwidth $k_T$ satisfies $k_T\to\infty$ and $T^{\frac{4\alpha_d}{m} - \frac{1}{2}}\times k_T\to 0$ as $T\to\infty$. Then
\begin{equation}
\begin{aligned}
\max_{i, j = 1,\cdots, d}\vert
\frac{1}{T}\sum_{t_1 = 1}^T\sum_{t_2 = 1}^T K\left(\frac{t_1 - t_2}{k_T}\right)\boldsymbol\epsilon^{(t_1)}_i\boldsymbol\epsilon^{(t_2)}_j - T\times Cov(\overline{\boldsymbol\epsilon}_i, \overline{\boldsymbol\epsilon}_j)
\vert\\
 = O_p\left(v_T + T^{\frac{4\alpha_d}{m} - \frac{1}{2}}\times k_T\right),
\end{aligned}
\end{equation}
here
\begin{align*}
v_T =
\begin{cases}
k_T^{1 - \alpha},\ \text{if } 1 < \alpha < 2\\
\log(k_T) / k_T,\ \text{if } \alpha = 2\\
1 / k_T,\ \text{if } \alpha > 2
\end{cases}.
\end{align*}
\label{theorem.MA}
\end{theorem}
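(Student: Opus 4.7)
The plan is to combine Wu's martingale decomposition with a moment-based maximal inequality. Define the projection operator $\mathcal{P}_r X = \mathbf{E}[X|\mathcal{F}_r] - \mathbf{E}[X|\mathcal{F}_{r-1}]$, where $\mathcal{F}_r = \sigma(e_s: s \leq r)$. Then $\boldsymbol\epsilon_i^{(j)} = \sum_{k=0}^{\infty}\mathcal{P}_{j-k}\boldsymbol\epsilon_i^{(j)}$ telescopes, with $\Vert \mathcal{P}_{j-k}\boldsymbol\epsilon_i^{(j)}\Vert_m \leq \delta_i^{(j,k)} \leq \delta^{(k)}$ from the coupling interpretation of $\delta^{(j)}$. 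For each fixed lag $k$, $\{\mathbf{b}_j \mathcal{P}_{j-k}\boldsymbol\epsilon_i^{(j)}\}_{j=1}^T$ is a martingale-difference sequence with respect to the re-indexed filtration, so the Burkholder--Davis--Gundy inequality gives $\Vert \sum_j \mathbf{b}_j \mathcal{P}_{j-k}\boldsymbol\epsilon_i^{(j)}\Vert_m \leq C_m \delta^{(k)} \vert\mathbf{b}\vert_2$. Summing over $k$ and invoking $\sum_k \delta^{(k)} = O(1)$ from Definition \ref{definition.short_range} gives $\Vert \sum_j \mathbf{b}_j \boldsymbol\epsilon_i^{(j)}\Vert_m = O(\vert\mathbf{b}\vert_2)$ uniformly in $i$. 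The maximum over the $d$ coordinates is handled by $\Vert \max_i \vert X_i\vert \Vert_m^m \leq \sum_i \Vert X_i\Vert_m^m$, producing the factor $d^{1/m}$. For $\boldsymbol\Pi^{(s)}$, conditioning on $\mathcal{F}^{(t,s)}$ annihilates the martingale increments whose innovation already lies in this $\sigma$-field, leaving only lags $k > s$; then $\sum_{k > s}\delta^{(k)} = O((1+s)^{-\alpha})$ produces the additional polynomial factor.

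\textbf{Part 2.} The plan is to apply a high-dimensional Gaussian approximation theorem for dependent sequences in the spirit of \cite{chang2023central}, \cite{MR3779697}, and \cite{MR3161448}. I would perform a big-block/small-block decomposition: partition $\{1,\ldots,T\}$ into alternating big blocks of length $L_T$ and small blocks of length $\ell_T = o(L_T)$; the conditional bound on $\boldsymbol\Pi^{(s)}$ from Part 1, combined with the $(m,\alpha)$ tail decay, makes the big-block sums asymptotically independent of the $\sigma$-field generated by preceding big blocks, up to an error of order $\ell_T^{-\alpha}$. Replace each big-block partial sum by a Gaussian vector matching the second moments, and use the Chernozhukov--Chetverikov--Kato anti-concentration inequality to absorb the sup-norm approximation error. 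The stated dimension constraint $\alpha_d < m\alpha/(1+12\alpha)$ arises from balancing the moment-based approximation error (scaling like $d^{1/m}$ from Part 1) against the anti-concentration width (of order $\sqrt{\log d}$) together with the polynomial small-block error, under an optimal choice of $L_T,\ell_T$.

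\textbf{Part 3.} I decompose the kernel estimator into bias plus stochastic fluctuation. For the bias, write $\widehat\gamma_{ij} = \sum_h K(h/k_T)\widehat c_{ij}(h)$, where $\widehat c_{ij}(h)$ is the lag-$h$ sample cross-covariance. The coupling interpretation of $(m,\alpha)$-SRD yields $\vert c_{ij}(h)\vert \leq C(1 + \vert h\vert)^{-\alpha}$: writing $c_{ij}(h) = \mathbf{E}[(\boldsymbol\epsilon_i^{(t+h)} - \widetilde{\boldsymbol\epsilon}_i^{(t+h)})\boldsymbol\epsilon_j^{(t)}]$ with $\widetilde{\boldsymbol\epsilon}$ obtained by replacing all innovations $e_s, s \leq t,$ with fresh copies, one has the $\Vert\cdot\Vert_m$ coupling error $\sum_{k \geq h}\delta^{(k)} = O(h^{-\alpha})$. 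Combined with $T\cdot\mathrm{Cov}(\overline{\boldsymbol\epsilon}_i,\overline{\boldsymbol\epsilon}_j) = \sum_h c_{ij}(h) + O(T^{-1})$ (boundary effects), the bias becomes $\sum_h(1 - K(h/k_T)) c_{ij}(h)$, which splits at $\vert h\vert \sim k_T$ and yields the three cases of $v_T$ by standard kernel-bias estimates. For the stochastic fluctuation, Lemma \ref{lemma.operation}(2) shows the centered products are $(m/2,\alpha)$-SRD of dimension $d^2$. Rewriting the fluctuation as $\sum_{\vert h\vert \leq C k_T} K(h/k_T)\cdot \frac{1}{T}\sum_t Z_t^{(i,j,h)}$ with centered products, and applying Part 1 to the centered $Z$ process (weights $1/\sqrt T$, dimension $d^2$) uniformly over $(i,j)$, gives $O((d^2)^{2/m}/\sqrt T) = O(T^{4\alpha_d/m - 1/2})$ per lag; triangle-inequality aggregation over the $O(k_T)$ relevant lags produces the $T^{4\alpha_d/m - 1/2} k_T$ term. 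The principal obstacle is Part 2: the Gaussian approximation for non-stationary physically-dependent vectors requires careful tracking of how the moment bound, dependence decay, and anti-concentration width interact, which is precisely what pins down the tight constraint $\alpha_d < m\alpha/(1 + 12\alpha)$.
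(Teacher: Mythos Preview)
Your proposal is correct and follows essentially the same route as the paper. Part 1 is exactly the paper's argument (martingale projections plus Burkholder, then the crude $d^{1/m}$ maximal bound); Part 2 in the paper is carried out via a smooth-max function $h_{\psi,\psi,x}$, truncation to an $s$-dependent process, a big-block/small-block split (block sizes $q$ and $s$), and a Lindeberg telescoping swap of big-block sums for Gaussians, with the constraint $\alpha_d < m\alpha/(1+12\alpha)$ emerging from the explicit choices $\psi=\log^3 T$, $s\asymp T^{\alpha_s}$, $q\asymp T^{\alpha_q}$ balancing five error terms---this is your sketch made concrete. One small caveat on Part 3: your citation of Lemma \ref{lemma.operation}(2) literally covers only lags $h\in\{0,1\}$; the paper instead bounds $\Vert\sum_t(\boldsymbol\epsilon_i^{(t+h)}\boldsymbol\epsilon_j^{(t)}-\mathbf{E})\Vert_{m/2}\leq C\sqrt{T}$ directly via the martingale decomposition, checking uniformity in $h$ (the relevant dependence coefficients are $\delta^{(l)}+\delta^{(l-h)}$, whose tail sums are $O(1)$ uniformly in $h$), before triangle-summing over lags weighted by $K(h/k_T)$.
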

\begin{remark}
In particular, $\sqrt{Var(\overline{\boldsymbol\epsilon}_i)} = \Vert\overline{\boldsymbol\epsilon}_i\Vert_2\leq \Vert\overline{\boldsymbol\epsilon}_i\Vert_m\leq \frac{C}{\sqrt{T}}$. From theorem \ref{theorem.MA}, suppose $t_1 > t_2$, then
\begin{equation}
\begin{aligned}
\vert Cov(\boldsymbol\epsilon^{(t_1)}_i, \boldsymbol\epsilon^{(t_2)}_j)\vert = \vert\mathbf{E} \boldsymbol\epsilon^{(t_2)}_j\times(\boldsymbol\epsilon^{(t_1)}_i - \mathbf{E}\boldsymbol\epsilon^{(t_1)}_i|\mathcal{F}^{(t_1, t_1 - t_2 - 1)})\vert\\
\leq C\Vert\boldsymbol\epsilon^{(t_1)}_i - \mathbf{E}\boldsymbol\epsilon^{(t_1)}_i|\mathcal{F}^{(t_1, t_1 - t_2 - 1)}\Vert_m\\
\leq C_1(1 + \vert t_1 - t_2\vert)^{-\alpha},
\end{aligned}
\label{eq.covS}
\end{equation}
so the covariances of random variables in Definition \ref{definition.short_range} should exhibit a polynomial decay with
respect to the time lag between two random variables. This observation partially explains why we call definition \ref{definition.short_range} as
``short range dependence,'' meaning that covariances between $\boldsymbol\epsilon^{(t)}$ and $\boldsymbol\epsilon^{(t - k)}$ cannot be too large for large $k$.
\end{remark}

\section{Asymptotic results}
\label{section.Theoretical}
This section adopts the notion of $(m,\alpha)-$short range dependence to analyze the VAR model with non-i.i.d. innovations, which allows for the absence of
strict stationarity in the time series.
In the literature, i.i.d. innovation is always a necessary condition for the validity of the estimation and statistical inference of VAR time series
due to the fact that there are few satisfactory tools to describe the non-stationarity in time series. With the help of the $(m,\alpha)-$short range dependence notion,
statisticians are now able to derive the asymptotic distribution of the estimator $\widehat{\mathbf{A}}^{(l)}$ and prove the validity of the bootstrap
algorithm \ref{algorithm.bootstrap} even when the underlying process $\mathbf{x}^{(t)}$ is only weakly stationary but not strictly stationary.
Readers can refer to \cite{MR1093459} for the difference between weak and strict stationarity. We first present some technical assumptions:

\textbf{Assumptions:}

1. Observations $\mathbf{x}^{(t)}\in\mathbf{R}^d, t = 1,\cdots, T$ are stemmed from a vector autoregressive process, i.e., eq.\eqref{eq.AR_model}. Besides, $\mathbf{x}^{(t)}, t\in\mathbf{Z}$ are $(m,\alpha)-$short range dependent random variables with $m > 8$, $\alpha > 1$ are two constants. Suppose $p = O(1)$, $d = O(T^{\alpha_d})$, and
$
0\leq \alpha_d < \frac{m\alpha}{2 + 24\alpha}
$
is a constant.

2. Suppose the Lasso parameter $\lambda = c\times T^{-\alpha_\lambda}$, here $c, \alpha_\lambda$ are two constants with $0 < \alpha_\lambda < \frac{1}{2} - \frac{4\alpha_d}{m}$ and $c > 0$. The threshold $b_T$ satisfies $b_T\to 0$ and $\frac{b_T}{\lambda}\to\infty$ as $T\to\infty$. Define the set
$$
\mathcal{B}_l = \left\{j = 1,\cdots, pd: \mathcal{S}_{jl} \neq 0\right\}.
$$
Assume that $\max_{l = 1,\cdots, d}\vert\mathcal{B}_l\vert = O(1)$ and $\max_{l = 1,\cdots, d}\vert\mathcal{B}_l\vert > 0$. In addition assume
$\min_{l = 1,\cdots, d, j\in\mathcal{B}_l}\vert\mathbf{S}_{jl}\vert > 2b_T$. Define the $pd\times pd$ matrix
$$
\mathbf{S}^{\dagger} = \left[
\begin{matrix}
  \mathbf{S}^\top \\
  \mathbf{I}_d & 0 & \cdots & 0 & 0 \\
  0            & \mathbf{I}_d & \cdots & 0 & 0\\
  \vdots       & \vdots       & \cdots & \vdots & \vdots \\
  0            & 0            & \cdots &\mathbf{I}_d & 0\\
\end{matrix}
\right]^\top
$$
and assume all eigenvalues of $\mathbf{S}^{\dagger}$ is less than $1$ in absolute value. Define the matrix
$\boldsymbol\Sigma^{(0)} = \mathbf{E}\mathbf{z}^{(p)}\mathbf{z}^{(p)\top}$, where $\mathbf{z}^{(t)}$ is defined in eq.\eqref{eq.def_Z}. Assume that the minimum eigenvalue of $\boldsymbol\Sigma^{(0)}$ is greater than a constant $c > 0$.

3. Define the matrix
\begin{equation}
\boldsymbol\Lambda^{(l_1, l_2)} = \mathcal{F}_{\mathcal{B}_{l_1}}(\boldsymbol\Sigma^{(0)})\left(\frac{1}{T}\mathbf{E}\mathbf{W}\boldsymbol{\eta}^{(l_1)}\boldsymbol{\eta}^{(l_2)\top}\mathbf{W}^\top\right)\mathcal{F}_{\mathcal{B}_{l_2}}(\boldsymbol\Sigma^{(0)})
\text{, here  } l_1,l_2 = 1,\cdots, d.
\label{eq.cov_matrixX}
\end{equation}
Assume that there exists a constant $c>0$ such that for sufficiently large $T$, $\boldsymbol\Lambda^{(l,l)}_{jj} > c$ for all $l = 1,\cdots, d$
and $j\in\mathcal{B}_l$.

\begin{remark}
  The constraint on the eigenvalues of $\mathbf{S}^{\dagger}$ is essential to maintain the weak stationarity of the autoregressive
  process eq.\eqref{eq.AR_model}, as demonstrated in \cite{MR1238940}. Assumption 3 actually requires that the variances of
  the post-selection estimators do not degenerate to $0$; otherwise, the estimator will shrink to a constant.
  In such cases, statisticians may need to model the decay form of the variances before performing statistical inference.
  The work of \cite{MR4102694} provides readers with a good example of dealing with decaying variances.
  Finally, $(m,\alpha)-$short range dependence and the autoregressive form are not mutually exclusive.
  If there are only a few non-zero elements in $\mathbf{S}$,
   and the innovations are $(m,\alpha)$-short range dependent, then the observations' $(m,\alpha)-$short range dependency can be
   assured, as demonstrated in lemma \ref{lemma.operation}. Even when $\mathbf{S}$ contains moderate non-zero elements, numerical simulations
   show that the estimator and the proposed bootstrap algorithm still maintain
   the desired coverage probability, suggesting that the underlying assumption is not violated.
\end{remark}

With these technical assumptions, we can derive the desired theoretical guarantees of the proposed estimators and algorithms.
The first result involves the (model--selection) consistency of the Lasso estimator, which is presented in theorem \ref{theorem.Lasso_consistency}.

\begin{theorem}
  Suppose assumption 1 to 3. Then
  \begin{equation}
  \label{eq.Lasso_consistency}
    \max_{l = 1,\cdots, d}\vert\widetilde{\mathbf{S}}_{\cdot l} - \mathbf{S}_{\cdot l}\vert_1 = O_p\left(\lambda\right)\ \text{and  }
    \max_{l = 1,\cdots, d}\vert\widetilde{\mathbf{S}}_{\cdot l} - \mathbf{S}_{\cdot l}\vert_2 = O_p\left(\lambda\right),
  \end{equation}
  here $\lambda$ is the Lasso parameter defined in eq.\eqref{eq.use_lasso}.
  In particular, we have
  \begin{equation}\label{eq.model_selection}
    Prob\left(\cup_{l = 1,\cdots, d}(\widetilde{\mathcal{B}}_l \neq \mathcal{B}_l)\right) = o(1).
  \end{equation}
  \label{theorem.Lasso_consistency}
\end{theorem}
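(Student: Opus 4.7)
The plan is to follow a Bickel--Ritov--Tsybakov style argument for Lasso consistency, adapted to the $(m,\alpha)$-short range dependent setting by using Theorem \ref{theorem.MA} in place of classical sub-Gaussian concentration. Writing the linear model $\mathbf{y}^{(l)} = \mathbf{W}\mathbf{S}_{\cdot l} + \boldsymbol\eta^{(l)}$ from eq.\eqref{eq.linear_regression_form}, each column of $\widetilde{\mathbf{S}}$ is a standard Lasso problem, and the proof reduces to controlling, uniformly in $l$, the score $\frac{1}{T}\mathbf{W}^\top\boldsymbol\eta^{(l)}$ and a restricted eigenvalue of the empirical Gram matrix $\frac{1}{T}\mathbf{W}^\top\mathbf{W}$.

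First I would establish the score bound
$$
\max_{l = 1,\cdots, d}\Bigl\vert\frac{1}{T}\mathbf{W}^\top\boldsymbol\eta^{(l)}\Bigr\vert_\infty = o_p(\lambda).
$$
Its $O(d^2)$ entries are time averages of $\mathbf{x}^{(t - j)}_i\boldsymbol\epsilon^{(t)}_l$, which are mean zero because $\boldsymbol\epsilon^{(t)}$ is white noise orthogonal to the past $\mathbf{x}^{(t - j)}$. By Lemma \ref{lemma.operation}(1,2), these centered products form $(m/2, \alpha)$-short range dependent sequences, so the maximal inequality of Theorem \ref{theorem.MA}(1) applied at dimension $O(d^2)$ yields a bound of order $d^{4/m}/\sqrt{T} = O(T^{4\alpha_d/m - 1/2})$; the choice $\alpha_\lambda < 1/2 - 4\alpha_d/m$ in Assumption 2 then makes this $o(\lambda)$. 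An analogous argument applied to the centered products $\mathbf{x}^{(t - j)}_i\mathbf{x}^{(t - k)}_{i'} - \mathbf{E}\mathbf{x}^{(t - j)}_i\mathbf{x}^{(t - k)}_{i'}$ gives $\vert\frac{1}{T}\mathbf{W}^\top\mathbf{W} - \boldsymbol\Sigma^{(0)}\vert_\infty = o_p(1)$.

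Combined with the lower bound on the smallest eigenvalue of $\boldsymbol\Sigma^{(0)}$ (Assumption 2) and the sparsity bound $\max_l\vert\mathcal{B}_l\vert = O(1)$, this Gram concentration yields a uniform restricted-eigenvalue condition on $\frac{1}{T}\mathbf{W}^\top\mathbf{W}$ over the standard cone $\{\boldsymbol\delta: \vert\boldsymbol\delta_{\mathcal{B}_l^c}\vert_1 \leq 3\vert\boldsymbol\delta_{\mathcal{B}_l}\vert_1\}$: within this cone, $\vert\boldsymbol\delta\vert_1 \leq 4\sqrt{\vert\mathcal{B}_l\vert}\,\vert\boldsymbol\delta\vert_2$, so $\vert\boldsymbol\delta^\top(\frac{1}{T}\mathbf{W}^\top\mathbf{W} - \boldsymbol\Sigma^{(0)})\boldsymbol\delta\vert \leq 16\vert\mathcal{B}_l\vert\,\vert\frac{1}{T}\mathbf{W}^\top\mathbf{W} - \boldsymbol\Sigma^{(0)}\vert_\infty\,\vert\boldsymbol\delta\vert_2^2 = o_p(\vert\boldsymbol\delta\vert_2^2)$. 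Feeding this and the score bound into the basic inequality obtained from optimality in eq.\eqref{eq.use_lasso} confines $\widehat{\boldsymbol\delta}_l := \widetilde{\mathbf{S}}_{\cdot l} - \mathbf{S}_{\cdot l}$ to the cone and produces $\vert\widehat{\boldsymbol\delta}_l\vert_2 = O_p(\lambda\sqrt{\vert\mathcal{B}_l\vert}) = O_p(\lambda)$ and $\vert\widehat{\boldsymbol\delta}_l\vert_1 = O_p(\lambda\vert\mathcal{B}_l\vert) = O_p(\lambda)$, uniformly in $l$, which is eq.\eqref{eq.Lasso_consistency}.

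For eq.\eqref{eq.model_selection}, the uniform $\ell_1$ bound implies $\max_{l,j}\vert\widetilde{\mathbf{S}}_{jl} - \mathbf{S}_{jl}\vert = O_p(\lambda) = o_p(b_T)$, since $b_T/\lambda \to \infty$ in Assumption 2. For $j\notin\mathcal{B}_l$, $\mathbf{S}_{jl} = 0$ forces $\vert\widetilde{\mathbf{S}}_{jl}\vert < b_T$ with probability tending to one, so $j\notin\widetilde{\mathcal{B}}_l$; for $j\in\mathcal{B}_l$, the minimum-signal condition $\vert\mathbf{S}_{jl}\vert > 2b_T$ combined with the same $o_p(b_T)$ error forces $\vert\widetilde{\mathbf{S}}_{jl}\vert > b_T$, so $j\in\widetilde{\mathcal{B}}_l$; hence $\widetilde{\mathcal{B}}_l = \mathcal{B}_l$ for all $l$ on a common event of probability tending to one. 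The main obstacle is the uniform score bound: the non-i.i.d. innovations rule out classical Hoeffding-type maximal inequalities, so the centered products must be carefully routed through Lemma \ref{lemma.operation} to fit the $(m,\alpha)$-short range dependence framework, and the dimension factor $d^{O(1/m)}$ from Theorem \ref{theorem.MA}(1) must be tracked against the slack in $\alpha_\lambda$ granted by Assumption 2.
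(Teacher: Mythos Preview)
Your proposal is correct and follows essentially the same Bickel--Ritov--Tsybakov route as the paper: basic inequality, uniform score bound $\max_l\vert\frac{1}{T}\mathbf{W}^\top\boldsymbol\eta^{(l)}\vert_\infty = O_p(T^{4\alpha_d/m-1/2}) = o_p(\lambda)$, restricted-eigenvalue via $\vert\widehat{\boldsymbol\Sigma}^{(0)}-\boldsymbol\Sigma^{(0)}\vert_\infty = o_p(1)$ and the cone inequality, then model selection from the $\ell_\infty$ bound and the threshold/minimum-signal conditions. The only cosmetic difference is that the paper rewrites the score entries via the Yule--Walker identity $\mathbf{z}^{(t)}_j\boldsymbol\epsilon^{(t+1)}_l = (\mathbf{z}^{(t)}_j\mathbf{z}^{(t+1)}_l - \boldsymbol\Sigma^{(1)}_{jl}) - \sum_{k\in\mathcal{B}_l}\mathbf{S}_{kl}(\mathbf{z}^{(t)}_k\mathbf{z}^{(t)}_j - \boldsymbol\Sigma^{(0)}_{kj})$ so that Lemma~\ref{lemma.operation}(1,2,3) applies verbatim to the lag-$0$ and lag-$1$ products of the concatenated $(m,\alpha)$-SRD sequence $\mathbf{z}^{(t)}$; your direct invocation of Lemma~\ref{lemma.operation} on $\mathbf{x}^{(t-j)}_i\boldsymbol\epsilon^{(t)}_l$ is morally the same but implicitly uses that $\boldsymbol\epsilon^{(t)}$ is itself a bounded-$L_1$ linear image of $\mathbf{z}^{(t)}$, which is worth making explicit.
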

The second result presents the convergence rate and the asymptotic distribution of the post-selection estimator,
which are summarized in the following theorem.
\begin{theorem}
Suppose assumption 1 to 3. Then the post-selection estimator satisfies
\begin{equation}
\begin{aligned}
\sup_{x\in\mathbf{R}}
\vert
Prob\left(
\max_{j = 1,\cdots, pd, l = 1,\cdots, d}\sqrt{T}\vert\widehat{\mathbf{S}}_{jl} - \mathbf{S}_{jl}\vert\leq x
\right)\\
 - Prob\left(\max_{l = 1,\cdots, d, j\in\mathcal{B}_l}\vert\boldsymbol\Gamma_{jl}\vert\leq x\right)
\vert = o(1).
\end{aligned}
\label{eq.prob_result}
\end{equation}
Here $\boldsymbol\Gamma_{jl}$ are joint normal random variables with $\mathbf{E}\boldsymbol\Gamma_{jl} = 0$ and
\begin{align*}
Cov\left(
\boldsymbol\Gamma_{j_1l_1}, \boldsymbol\Gamma_{j_2l_2}
\right) = \boldsymbol\Lambda^{(l_1, l_2)}_{j_1 j_2}.
\end{align*}
\label{theorem.Gaussian_approx_}
\end{theorem}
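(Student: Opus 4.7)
The overall strategy is to reduce to the event of correct model selection using Theorem \ref{theorem.Lasso_consistency}, linearize the post-selection estimator on that event, substitute the random normalizing matrix by a deterministic limit, and finally apply the Gaussian approximation in Theorem \ref{theorem.MA}(2) to the resulting linear functional of $(m/2,\alpha)$-short range dependent random variables.

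First I would use \eqref{eq.model_selection} to restrict to the event $\widetilde{\mathcal{B}}_l=\mathcal{B}_l$ for every $l$, which carries probability $1-o(1)$. Definition \ref{definition.partial_inverse} implies $\mathcal{F}_{\mathcal{B}_l}(\cdot)$ has zero rows outside $\mathcal{B}_l$, hence $\widehat{\mathbf{S}}_{jl}=0$ for $j\notin\mathcal{B}_l$; since Assumption 2 gives $\mathbf{S}_{jl}=0$ on the same set, the outer maximum on the left-hand side of \eqref{eq.prob_result} collapses to $\max_{l,\,j\in\mathcal{B}_l}\sqrt{T}|\widehat{\mathbf{S}}_{jl}-\mathbf{S}_{jl}|$, matching the index set of the Gaussian vector $\boldsymbol{\Gamma}$. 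For $j\in\mathcal{B}_l$ the partial-inverse identity $\mathcal{F}_{\mathcal{B}_l}(\mathbf{A})\mathbf{A}\boldsymbol{\beta}=\boldsymbol{\beta}$ applied to $\mathbf{A}=\tfrac{1}{T}\mathbf{W}^\top\mathbf{W}$ and $\boldsymbol{\beta}=\mathbf{S}_{\cdot l}$, combined with $\mathbf{y}^{(l)}=\mathbf{W}\mathbf{S}_{\cdot l}+\boldsymbol{\eta}^{(l)}$, yields the linear representation
\begin{equation*}
\sqrt{T}(\widehat{\mathbf{S}}_{\cdot l}-\mathbf{S}_{\cdot l}) = \mathcal{F}_{\mathcal{B}_l}\bigl(\tfrac{1}{T}\mathbf{W}^\top\mathbf{W}\bigr)\,\tfrac{1}{\sqrt{T}}\mathbf{W}^\top\boldsymbol{\eta}^{(l)}.
\end{equation*}

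Next I would replace $\mathcal{F}_{\mathcal{B}_l}(\tfrac{1}{T}\mathbf{W}^\top\mathbf{W})$ by $\mathcal{F}_{\mathcal{B}_l}(\boldsymbol{\Sigma}^{(0)})$. Because $|\mathcal{B}_l|=O(1)$ and the minimum eigenvalue of the submatrix $\boldsymbol{\Sigma}^{(0)}_{\mathcal{B}_l}$ is bounded below by Assumption 2, the map $\mathbf{A}\mapsto\mathcal{F}_{\mathcal{B}_l}(\mathbf{A})$ is Lipschitz in the entrywise sup-norm on the relevant neighbourhood; the replacement error therefore reduces to controlling $\max_{i,j}|(\tfrac{1}{T}\mathbf{W}^\top\mathbf{W}-\boldsymbol{\Sigma}^{(0)})_{ij}|$, which Theorem \ref{theorem.MA}(1), together with Lemma \ref{lemma.operation}(2)--(4), bounds after rewriting $\mathbf{z}^{(t)}_i\mathbf{z}^{(t)}_j-\mathbf{E}\mathbf{z}^{(t)}_i\mathbf{z}^{(t)}_j$ as an $(m/2,\alpha)$-short range dependent sequence. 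After substitution, the statistic equals $\mathcal{F}_{\mathcal{B}_l}(\boldsymbol{\Sigma}^{(0)})_{j\cdot}\cdot\tfrac{1}{\sqrt{T}}\mathbf{W}^\top\boldsymbol{\eta}^{(l)}$ up to an $\ell^\infty$-negligible remainder, and this is a fixed linear functional of the time-$t$ vector whose coordinates are the products $\mathbf{x}^{(t-s)}_i\boldsymbol{\epsilon}^{(t)}_l$ for $s=1,\dots,p$ and $i,l=1,\dots,d$. Lemma \ref{lemma.operation}(1), (2) and (4) (the last used to rewrite $\mathbf{x}^{(t)}$ as a moving average of the innovations) show this vector is mean-zero and $(m/2,\alpha)$-short range dependent; the non-degeneracy condition of Theorem \ref{theorem.MA}(2) follows directly from the lower bound on $\boldsymbol{\Lambda}^{(l,l)}_{jj}$ in Assumption 3. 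Theorem \ref{theorem.MA}(2) then provides a Kolmogorov-distance Gaussian approximation, and a direct second-moment computation identifies the limit covariances as $\boldsymbol{\Lambda}^{(l_1,l_2)}_{j_1j_2}$ in \eqref{eq.cov_matrixX}.

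The hardest step will be showing that the linearization remainder vanishes in the $\ell^\infty$ sense uniformly over $l=1,\dots,d$ and $j\in\mathcal{B}_l$ at a rate fast enough to survive Gaussian anti-concentration. Because the right-hand side of \eqref{eq.prob_result} is a maximum over a $d$-scale family of Gaussian coordinates, a standard Nazarov-type anti-concentration estimate only allows errors of size $o(1/\sqrt{\log d})$ to be absorbed; meeting this rate will force combining the moment bound in Theorem \ref{theorem.MA}(1), the sparsity $|\mathcal{B}_l|=O(1)$, and the precise dimension restriction $\alpha_d<\tfrac{m\alpha}{2+24\alpha}$ from Assumption 1 to balance the growth of $d$ against the approximation rate and against the fluctuations of $\tfrac{1}{T}\mathbf{W}^\top\mathbf{W}-\boldsymbol{\Sigma}^{(0)}$.
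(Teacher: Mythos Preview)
Your proposal is correct and follows essentially the same route as the paper: reduce to the event $\widetilde{\mathcal{B}}_l=\mathcal{B}_l$ via Theorem~\ref{theorem.Lasso_consistency}, use the partial-inverse identity to linearize, replace $\mathcal{F}_{\mathcal{B}_l}(\tfrac{1}{T}\mathbf{W}^\top\mathbf{W})$ by $\mathcal{F}_{\mathcal{B}_l}(\boldsymbol\Sigma^{(0)})$ via the eigenvalue lower bound and Lemma~\ref{lemma.covariance_matrix}, apply Theorem~\ref{theorem.MA}(2) to the resulting $(m/2,\alpha)$-short range dependent vector, and absorb the $O_p(T^{8\alpha_d/m-1/2})$ remainder by Gaussian anti-concentration. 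One minor simplification: Assumption~1 already posits that $\mathbf{x}^{(t)}$ is $(m,\alpha)$-short range dependent, so you do not need Lemma~\ref{lemma.operation}(4) to deduce this from the innovations---parts~(1), (2) and (3) suffice to handle $\mathbf{z}^{(t)}$, the products $\mathbf{z}^{(t)}_q\boldsymbol\epsilon^{(t+1)}_l$, and the fixed linear combinations through $\mathcal{F}_{\mathcal{B}_l}(\boldsymbol\Sigma^{(0)})$.
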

\begin{remark}
Theorem \ref{theorem.Gaussian_approx_} explains the necessity of the condition $\max_{l = 1,\cdots, d}\vert\mathcal{B}_l\vert > 0$ in Assumption 2.
Without this constraint,
$\mathcal{B}_l$ can be empty sets for all $l$, and $\boldsymbol\Gamma_{jl}$ may degenerate to the constant $0$. If this happens, then the discussion regarding
 the asymptotic distribution of the post--selection estimator $\widehat{\mathbf{S}}_{jl}$ becomes meaningless.
It is well-known that the joint normal random variables $\mathbf{\Gamma}_{jl}$ satisfy
$\max_{l = 1,\cdots, d, j\in\mathcal{B}_l}\vert\boldsymbol\Gamma_{jl}\vert = O_p\left(\sqrt{\log(T)}\right)$. Moreover, from
Theorem \ref{theorem.MA} in Section \ref{section.m_alpha_dependent}, for any $j = 1,\cdots, pd$,
\begin{align*}
\Vert\mathcal{F}_{\mathcal{B}_{l_1}}(\boldsymbol\Sigma^{(0)})_{j\cdot}\frac{1}{\sqrt{T}}\mathbf{W}\boldsymbol\eta^{(l)}\Vert_{m/2}\leq C
\end{align*}
for a fixed constant $C$. Therefore, theorem \ref{theorem.Gaussian_approx_} implies that the post-selection estimator has convergence rate
\begin{equation}
\max_{j = 1,\cdots, pd, l = 1,\cdots, d}\vert\widehat{\mathbf{S}}_{jl} - \mathbf{S}_{jl}\vert = O_p\left(\frac{\sqrt{\log(T)}}{\sqrt{T}}\right).
\label{eq.S_max_bound}
\end{equation}
\end{remark}

The last result involves the consistency of the (implicit) covariances estimator, as well as the validity of the bootstrap
algorithm \ref{algorithm.bootstrap}.  According to \cite{MR1707286}, the consistency of the bootstrap algorithm is
guaranteed as long as the distribution of the bootstrapped statistics in the bootstrap world
(i.e., conditional on the observed data) converges to the asymptotic distribution of the original estimator, that is,
\begin{equation}
\sup_{x\in\mathbf{R}}\vert
Prob^*\left(\max_{j = 1,\cdots, pd, l = 1,\cdots, d}\vert\boldsymbol\Delta^*_{jl}\vert\leq x\right) - Prob\left(\max_{l = 1,\cdots, d, j\in\mathcal{B}_l}\vert\boldsymbol\Gamma_{jl}\vert\leq x\right)
\vert = o_p(1),
\label{eq.boot_consistency}
\end{equation}
where $\boldsymbol\Delta^*_{jl}$ is introduced in algorithm \ref{algorithm.bootstrap}, $\boldsymbol\Gamma_{jl}$ is defined in theorem \ref{theorem.Gaussian_approx_}, and
$Prob^*(\cdot)$ is ``the probability in the bootstrap world, '' i.e., the conditional probability conditioning on the observations $\mathbf{x}^{(t)}, t = 1,\cdots, T$.
We summarize these results in Theorem \ref{theorem.bootstrap}.
\begin{theorem}
Suppose assumption 1 to 3. In addition, suppose the function $K(\cdot)$ in algorithm \ref{algorithm.bootstrap} is a kernel function (satisfying definition \ref{definition.kernel}) and $k_T$ satisfies $k_T\to\infty$, $v_T\times\log^9(T) = o(1)$, and
\begin{equation}
k_T\times T^{\frac{8\alpha_d}{m} - \frac{1}{2}}\times\log(T)^{\frac{19}{2}}\to 0 \text{ as } T\to\infty.
\end{equation}
 Then
\begin{equation}
\begin{aligned}
\max_{j = 1,\cdots, pd, l = 1,\cdots, d}\vert\mathbf{E}^*\boldsymbol\Delta_{j_1l_1}^*\boldsymbol\Delta_{j_2l_2}^* -\boldsymbol\Lambda^{(l_1, l_2)}_{j_1 j_2}\vert =
O_p\left(v_T + k_T\times T^{\frac{8\alpha_d}{m} - \frac{1}{2}}\times\sqrt{\log(T)}\right),
\end{aligned}
\label{eq.convergence_covariance}
\end{equation}
where $\boldsymbol\Lambda^{(l_1, l_2)}$ is defined in eq.\eqref{eq.cov_matrixX} and $k_T$ coincides with theorem \ref{theorem.MA}.
Moreover, eq.\eqref{eq.boot_consistency} holds true asymptotically. $\boldsymbol\Delta^*_{jl}$ in eq.\eqref{eq.convergence_covariance}
is defined in Algorithm \ref{algorithm.bootstrap}.
\label{theorem.bootstrap}
\end{theorem}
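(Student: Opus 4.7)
The plan is to establish \eqref{eq.convergence_covariance} first and then deduce \eqref{eq.boot_consistency} via a Gaussian max--comparison argument. Since each $e^{(t)}$ is jointly normal conditional on the data, $(\boldsymbol\Delta^*_{jl})$ is a conditionally centred Gaussian vector whose covariance admits the closed kernel--weighted form displayed just before Algorithm \ref{algorithm.bootstrap}, valid on the high--probability event $\{\widetilde{\mathcal{B}}_l = \mathcal{B}_l,\ l = 1,\ldots,d\}$ guaranteed by Theorem \ref{theorem.Lasso_consistency}. So the theorem reduces to (i) bounding the uniform distance between that conditional covariance and $\boldsymbol\Lambda^{(l_1,l_2)}$, and (ii) translating such a covariance bound into a Kolmogorov distance bound on the maxima.

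For (i) I split the error into two pieces. First, the partial inverse factors $\mathcal{F}_{\widetilde{\mathcal{B}}_l}(\widehat{\boldsymbol\Sigma}^{(0)})$ are compared with $\mathcal{F}_{\mathcal{B}_l}(\boldsymbol\Sigma^{(0)})$; on the good event it suffices to show $\widehat{\boldsymbol\Sigma}^{(0)} \to \boldsymbol\Sigma^{(0)}$ on the relevant sub--matrix, which follows because $\mathbf{z}^{(t)}\mathbf{z}^{(t)\top}$ is $(m/2,\alpha)$--short range dependent by Lemma \ref{lemma.operation}, and the lower--eigenvalue hypothesis on $\boldsymbol\Sigma^{(0)}$ keeps the pseudo--inverse Lipschitz on the sub--matrix. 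Second, I compare $\frac{1}{T}\sum_{t_1,t_2}K((t_1 - t_2)/k_T)\widehat{\boldsymbol\Theta}^{(t_1)}_{q_1l_1}\widehat{\boldsymbol\Theta}^{(t_2)}_{q_2l_2}$ with $\frac{1}{T}\mathbf{E}\mathbf{W}\boldsymbol\eta^{(l_1)}\boldsymbol\eta^{(l_2)\top}\mathbf{W}^\top$. The identity $\mathbf{z}^{(t+1)} - \mathbf{S}^{\dagger\top}\mathbf{z}^{(t)} = \tilde{\boldsymbol\epsilon}^{(t+1)}$, a padded innovation whose first $d$ coordinates equal $\boldsymbol\epsilon^{(t+1)}$, lets me write $\widehat{\boldsymbol\Theta}^{(t)}_{ql} = \mathbf{z}^{(t)}_q\boldsymbol\epsilon^{(t+1)}_l + \mathbf{z}^{(t)}_q\mathbf{z}^{(t)\top}(\mathbf{S}^\dagger - \widehat{\mathbf{S}}^\dagger)_{\cdot l}$ for $l \le d$. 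The oracle piece is an $(m/2,\alpha)$--short range dependent process by Lemma \ref{lemma.operation}, so Theorem \ref{theorem.MA} part 3 delivers the $v_T$ contribution. The remainder cross terms are bounded uniformly in $(q_1, l_1, q_2, l_2)$ by Cauchy--Schwarz together with the moment bound of Theorem \ref{theorem.MA} part 1 (applied to the $(m/2,\alpha)$--short range dependent quadratic processes from Lemma \ref{lemma.operation} part 2) and the max--norm rate $O_p(\sqrt{\log T/T})$ for $\widehat{\mathbf{S}} - \mathbf{S}$ from \eqref{eq.S_max_bound}; summed, these produce the $k_T T^{8\alpha_d/m - 1/2}\sqrt{\log T}$ term.

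For (ii), both $(\boldsymbol\Delta^*_{jl})$ and $(\boldsymbol\Gamma_{jl})$ are centred Gaussian, so a Chernozhukov--Chetverikov--Kato type max--comparison inequality (the variant used in \cite{MR4500619} and \cite{chang2023central}) reduces \eqref{eq.boot_consistency} to the already--proven covariance bound: if two centred Gaussian vectors in $\mathbf{R}^D$ differ in $\ell_\infty$ covariance distance by $\delta$, their sup distributions differ in Kolmogorov distance by $C\delta^{1/3}\log^{2/3}(D)$. With $D = O(\mathrm{poly}(T))$, the hypotheses $v_T\log^9(T) = o(1)$ and $k_T T^{8\alpha_d/m - 1/2}\log^{19/2}(T) \to 0$ are tuned precisely so that this bound vanishes, and combining with Theorem \ref{theorem.Gaussian_approx_} this yields \eqref{eq.boot_consistency}.

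The main obstacle I expect is controlling the quartic remainder in step (i) uniformly in $(q_1,l_1,q_2,l_2)$. A cross term such as $\frac{1}{T}\sum_{t_1,t_2}K((t_1-t_2)/k_T)\mathbf{z}^{(t_1)}_{q_1}\mathbf{z}^{(t_1)\top}(\mathbf{S}^\dagger - \widehat{\mathbf{S}}^\dagger)_{\cdot l_1}\mathbf{z}^{(t_2)}_{q_2}\boldsymbol\epsilon^{(t_2+1)}_{l_2}$ is quartic in the data and must be bounded in max norm over polynomially many indices. The ingredients---the $\ell_\infty$ moment bound in Theorem \ref{theorem.MA} part 1, sparsity of $\mathbf{S}^\dagger - \widehat{\mathbf{S}}^\dagger$ on the good event, and the $\sqrt{\log T/T}$ rate for $\widehat{\mathbf{S}}$---are all available, but assembling them without losing more than a single $\sqrt{\log T}$ factor requires care and is the reason for the comparatively restrictive $\log^{19/2}(T)$ factor in the bandwidth assumption.
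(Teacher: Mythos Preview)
Your proposal is correct and follows essentially the same route as the paper's proof: restrict to the event $\{\widetilde{\mathcal{B}}_l=\mathcal{B}_l\}$ from Theorem~\ref{theorem.Lasso_consistency}, expand $\widehat{\boldsymbol\Theta}^{(t)}_{ql}$ as $\mathbf{z}^{(t)}_q\boldsymbol\epsilon^{(t+1)}_l$ plus a remainder driven by $\widehat{\mathbf{S}}-\mathbf{S}$, apply Theorem~\ref{theorem.MA} part 3 to the oracle piece (after showing it is $(m/2,\alpha)$--short range dependent), bound the cross terms by Cauchy--Schwarz together with the Toeplitz spectral bound $\bigl\vert\{K((t_1-t_2)/k_T)\}\bigr\vert_2\le Ck_T$ and the rate \eqref{eq.S_max_bound}, replace $\mathcal{F}_{\mathcal{B}_l}(\widehat{\boldsymbol\Sigma}^{(0)})$ by $\mathcal{F}_{\mathcal{B}_l}(\boldsymbol\Sigma^{(0)})$ via eigenvalue stability, and finish with a Gaussian max--comparison inequality. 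The only cosmetic differences are that the paper organises the two replacements (of $\widehat{\mathbf{S}}$ and of the partial inverse) in the opposite order and invokes the comparison lemma from \cite{zhangPolitis} (giving a bound of the form $C\log^3(T)\eta^{1/3}+\eta^{1/6}$) rather than the CCK variant you cite, but the arithmetic with the $\log^{19/2}(T)$ hypothesis is the same.
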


\section{Numerical experiments}
\label{section.numerical}
This section provides readers with a series of numerical experiments to verify the soundness of our work under different circumstances.
We undertake a performance comparison among our approach, the method of fitting column-wise Lasso, as introduced by
\cite{MR3357870}, \cite{MESSNER20191485} and \cite{MR4209452}; the method of
fitting column-wise Dantzig selector introduced by \cite{MR3450535}; and the method of fitting column-wise thresholded Lasso presented
in \cite{MR2847974}. The autoregressive sieve bootstrap, as demonstrated by \cite{MR3343007}, is recognized as a
common practice for analyzing a VAR time series. Nevertheless, the consistency of the autoregressive sieve bootstrap hinges
on the assumption that  the innovations $\boldsymbol\epsilon^{(t)}$ in eq.\eqref{eq.AR_model} are indeed independent and identically distributed;
while the validity of algorithm \ref{algorithm.bootstrap} does not rely on this assumption.
This section compares the bootstrap algorithm \ref{algorithm.bootstrap} and the autoregressive sieve bootstrap to elucidate their different domain of validity.

\textbf{Generating data: } Suppose $\mathbf{e}^{(t)}_j$ are independent normal random variables with mean $0$ and variance $1$,
here $t\in\mathbf{Z}$ and $j = 1,\cdots, d$. After that, consider three types of $\boldsymbol\eta^{(t)}\in\mathbf{R}^d$ as follows:

\textbf{Independent: } $\boldsymbol\eta^{(t)}_j = \mathbf{e}^{(t)}_j$;

\textbf{Product normal: } $\boldsymbol\eta^{(t)}_j = \mathbf{e}^{(t)}_j\mathbf{e}^{(t - 1)}_j$;

\textbf{Non-stationary: } $\boldsymbol\eta^{(t)}_j = \mathbf{e}^{(t)}_j$ for $t \leq \lfloor\frac{T}{2}\rfloor$ and $\boldsymbol\eta^{(t)}_j = \mathbf{e}^{(t)}_j\mathbf{e}^{(t - 1)}_j$ for $t > \lfloor\frac{T}{2}\rfloor$, here $T$ is the sample size.

We define the innovations $\boldsymbol\epsilon^{(t)}$ to be
\begin{align*}
\boldsymbol\epsilon^{(t)} =
\left[
\begin{matrix}
1.0 & 0.5 & 0 & 0 & \cdots & 0\\
-0.5 & 1.0 & 0.5 & 0& \cdots & 0\\
\vdots & \vdots & \vdots & \vdots & \cdots & \vdots\\
0      &  0     & 0      & 0      & \cdots & 1.0\\
\end{matrix}
\right]\boldsymbol\eta^{(t)}.
\end{align*}
With these constructions, all 3 types of $\boldsymbol\epsilon^{(t)}$ are white noises with the same covariance matrices. However, the ``product normal'' innovations are not independent, and the ``non-stationary'' innovations are not stationary.  After that, define the matrices
\begin{align*}
	\mathbf{A}^{(1)} = \{\mathbf{A}_{ij}^{(1)}\}_{i,j = 1,\cdots, d}\ \text{with }\mathbf{A}_{ii+1}^{(1)} =  \mathbf{A}_{i+1i}^{(1)} = 0.3\ \text{and } 0\ \text{otherwise},\\
	\mathbf{A}^{(2)} = \{\mathbf{A}_{ij}^{(2)}\}_{i,j = 1,\cdots, d}\ \text{with }\ \mathbf{A}_{ii+1}^{(2)} = -0.3\ \text{and } 0\ \text{otherwise},\\
	\mathbf{A}^{(3)} = \{\mathbf{A}_{ij}^{(3)}\}_{i,j = 1,\cdots, d}\ \text{with }\ \mathbf{A}_{i+1i}^{(3)} = -0.4,\ \text{and } 0\ \text{otherwise }.
\end{align*}
Here $i = 1,\cdots, d$. We generate the observations $\mathbf{x}^{(t)}$ based on three time series models in this section, that is the VAR(1) model $\mathbf{x}^{(t)} = \boldsymbol\epsilon^{(t)} + \mathbf{A}^{(1)}\mathbf{x}^{(t - 1)} $, the VAR(2) model
$\mathbf{x}^{(t)} = \boldsymbol\epsilon^{(t)} + \mathbf{A}^{(1)}\mathbf{x}^{(t - 1)} + \mathbf{A}^{(2)}\mathbf{x}^{(t - 2)}$, and the VAR(3) model
$
\mathbf{x}^{(t)} = \boldsymbol\epsilon^{(t)} + \mathbf{A}^{(1)}\mathbf{x}^{(t - 1)} + \mathbf{A}^{(2)}\mathbf{x}^{(t - 2)} +
\mathbf{A}^{(3)}\mathbf{x}^{(t - 3)}
$.

\textbf{Hyper-parameter selection: } The proposed methods require the selection of several hyper--parameters: the order of the VAR model $p$,
the Lasso parameter $\lambda$, the threshold $b_T$, and the kernel bandwidth
$k_T$. The order $p$ can be selected using the Akaike information criterion (AIC),
as introduced in the manuscript by \cite{MR2172368}. Furthermore, numerous packages have implemented AIC for the order selection
of vector autoregressive time series data. Our work utilizes the Python package ``statsmodels'' implemented by
\cite{seabold2010statsmodels} to facilitate the order selection.

When data exhibit dependent structures, \cite{MR1294896} illustrate that commonly used cross -- validation techniques may not
work well due to the dependency between training and test sets. To avoid violating the dependent structure of the vector time series,
we prefer using the train-test split rather than cross-validation to fine-tune $\lambda$ and $b_T$.
Specifically, we fix an integer $T_1 < T$, the sample size, and use $\mathbf{x}_1,\cdots, \mathbf{x}_{T_1}$ to
fit the parameters $\widehat{\mathbf{A}}^{(k)},\ k = 1,\cdots, p$. Afterward, we use the data
$\mathbf{x}_{T_1 + 1},\cdots, \mathbf{x}_T$ to calculate the test set mean-squared error
$$
\tau = \frac{1}{T - T_1}\sum_{t = T_1 + p}^T \vert\mathbf{x}^{(t)} - \sum_{k = 1}^p \widehat{\mathbf{A}}^{(k)}\mathbf{x}^{(t - k)}\vert^2_2,
$$
here the order $p$ is selected through AIC. We choose $\lambda$ and $b_T$ that minimize $\tau$. In this work, we choose $T_1 = \lfloor\frac{3}{4}T\rfloor$, and $\lfloor x\rfloor$ represents the largest integer that is smaller than or equal to $x$.

Selecting the kernel bandwidth $k_T$ based on data can be quite involved, but the literature has already
provided some ways to address this issue. \cite{MR2041534} derived an estimator of the optimal block size for
block bootstrap methods based on the notion of spectral estimation via the flat-top lag-windows, which was
implemented in the R package ``np'' . This method was later utilized by \cite{MR2656050} for
bandwidth selection in the dependent wild bootstrap. Other notable
contributions include \cite{MR2380557}, \cite{MR2374985},  and references therein. Our work prefers
the method introduced by \cite{MR2041534} for selecting $k_T$. However, this method was originally designed
for scalar time series, so we make certain modifications, as demonstrated in Algorithm \ref{algorithm.select}, to adapt the method to vector time series.
\begin{algorithm}
\textbf{Input: } Observations $\mathbf{x}^{(t)}\in\mathbf{R}^d, t = 1,\cdots, T$ stemmed from eq.\eqref{eq.AR_model}; the Lasso parameter $\lambda$; and the threshold $b_T$.

1. Follow step 1 and 2 in algorithm \ref{algorithm.bootstrap} to derive $\widehat{\boldsymbol\Theta}^{(t)}$ for $t = p,\cdots, T - 1$.

2. For $i = 1,\cdots, pd, j = 1,\cdots, d$, feed the scalar time series $\widehat{\boldsymbol\Theta}^{(t)}_{ij}, t= p,\cdots, T - 1$ to the algorithm derived by \cite{MR2041534}, and calculate the corresponding bandwidths $\widetilde{k}_{T, ij}$.

3. Finally, calculate the median $\widehat{k}_T$ of the bandwidths $\widetilde{k}_{T, ij}$. We consider the median $\widehat{k}_T$ to be a suitable bandwidth for the bootstrap algorithm \ref{algorithm.bootstrap}.
\label{algorithm.select}
\end{algorithm}
To illustrate Algorithm \ref{algorithm.select}, notice that
\begin{align*}
  \{\widehat{\boldsymbol\Theta}^{(t)}_{ij}\}_{i = 1,\cdots, pd, j = 1,\cdots, d} = \mathbf{z}^{(t)}\mathbf{x}^{(t+1)\mathcal{T}} - \mathbf{z}^{(t)}\mathbf{z}^{(t)\mathcal{T}}\widehat{\mathbf{S}}\approx \mathbf{z}^{(t)}\boldsymbol\epsilon^{(t + 1)\mathcal{T}}
\end{align*}
according to eq.\eqref{eq.S_max_bound}. Moreover, Algorithm \ref{algorithm.bootstrap} eventually constructs confidence
intervals for
the linear combinations of the sample mean $\frac{1}{T}\sum_{t = p}^{T - 1}\mathbf{z}^{(t)}\boldsymbol\epsilon^{(t + 1)\mathcal{T}}$.
This aligns with the setting described in \cite{MR2041534}. While we opt for the medium with the expectation that the bandwidth should
be appropriate for the majority of
$\widehat{\boldsymbol\Theta}^{(t)}_{ij}$, other alternatives, such as the sample mean of $\widetilde{k}_{T, ij}$, should also be reasonable choices.

\textbf{Results: } The numerical results are presented in Table \ref{table.experiment_parameter}, Table \ref{table.numerical_res},
Figure \ref{figure.heat}, and Figure \ref{figure.performance_sec}.  Additional simulation results, which are similar to what
we present here, will be postponed to the online
supplement.
We evaluate the performance of the estimator using the column-wise one (C1) norm $\vert\cdot\vert_{C1}$, the column-wise
two (C2) norm $\vert\cdot\vert_{C2}$, and the model misspecification $\kappa$, defined as follows:
\begin{align*}
\vert \widehat{\mathbf{S}}^\dagger - \mathbf{S}\vert_{C1} = \max_{l = 1,\cdots, d}\sum_{i = 1,\cdots, pd}\vert\widehat{\mathbf{S}}_{il}^\dagger - \mathbf{S}_{il}\vert,\
\vert\widehat{\mathbf{S}}^\dagger - \mathbf{S}\vert_{C2} = \max_{l = 1,\cdots, d}\sqrt{\sum_{i = 1}^{pd}\vert\widehat{\mathbf{S}}_{il}^\dagger - \mathbf{S}_{il}\vert^2},\\
\text{and } \kappa = \text{the number of elements in the set } \mathcal{K},\\
\text{where } \mathcal{K} = \{(i,j): \widehat{\mathbf{S}}_{ij}^\dagger = 0\text{ and } \mathbf{S}_{ij}\neq 0\}\cup \{(i,j): \widehat{\mathbf{S}}_{ij}^\dagger \neq 0\text{ and } \mathbf{S}_{ij} =  0\},
\end{align*}
here $\widehat{\mathbf{S}}^\dagger$ is an estimator for the matrix $\mathbf{S} = [\mathbf{A}^{(1)},\cdots, \mathbf{A}^{(p)}]^\top$.
The reason for using the column-wise norm is that many estimation procedures for the vector autoregressive model,
including those introduced in this section, are established through column-wise linear regression.
The column-wise norms not only reflect the general efficiency of these methods but also provide insights
into the overall performance of different linear regression methods when fitting a sparse linear model.
The model misspecification evaluates the difference between the estimated and actual positions
of non-zero parameters in matrix $\mathbf{S}$. Clearly, if the difference is small, then the proposed algorithm
is capable of recovering the underlying sparse pattern of  $\mathbf{S}$.

Figure \ref{figure.heat} plots the actual positions of non-zero parameters in $\mathbf{S}$ along with
those estimated by the proposed post-selection algorithm and the Lasso algorithm. For the Lasso algorithm, we consider the estimated parameter to be $0$ if its absolute value is smaller than 0.01. According to Figure \ref{figure.heat}, the post-selection algorithm is capable of recovering the underlying sparse pattern of the actual parameters. In contrast, the column-wise Lasso algorithm tends to identify many zero parameters as non-zero. Consequently, as demonstrated in Table \ref{table.numerical_res}, the failure to recover the sparse pattern of the parameters significantly increases the estimation error.

Figure \ref{figure.performance_sec} demonstrates the performance of various algorithms concerning
different choices of hyperparameters. The threshold Lasso exhibits  performance similar to the post-selection estimator,
but the post-selection estimator is less sensitive to fluctuations in hyperparameters compared to the threshold Lasso.
Therefore, even if the model selection algorithm does not provide optimal parameters, the performance of the post-selection algorithm
is not significantly compromised. The Lasso and the column-wise Dantzig selector have relatively worse performance compared to the
threshold Lasso and the post-selection estimator.

Table \ref{table.numerical_res} evaluates the overall performance of various estimation procedures, as well as bootstrap algorithms for statistical inference.
Among the procedures used in this section, the post-selection estimator exhibits the smallest estimation errors and demonstrates low average model
misspecification, indicating its ability to correctly recover the sparse pattern of the parameters. In contrast, both the column-wise Lasso and
the column-wise Dantiz selector fail to recover the sparse pattern.
The simultaneous confidence intervals generated by the second-order wild bootstrap algorithm achieve desired coverage probability,
even in cases where the innovations are dependent or non-stationary. On the other hand, classical statistical inference methods
in multivariate time series literature, such as the autoregressive sieve bootstrap,
work well when the innovations are indeed independent. However, in situations where the
innovations are dependent or non-stationary, the autoregressive sieve bootstrap either results
in under-coverage or generates unnecessarily wide confidence intervals. This comparison reveals
that the second-order wild bootstrap has a broader range of validity compared to the autoregressive sieve bootstrap.
Practically speaking, as demonstrated in section 3.3 of \cite{MR1238940},
modelling or approximating a vector time series by an autoregressive process  with white-noise innovations is reasonable.
However, further assuming that the innovations are i.i.d. can be too restrictive.
Therefore, statisticians may choose to adopt the proposed second-order wild bootstrap for statistical inference,
thus getting rid of the strong assumptions about the innovations present in the literature.

\begin{figure}[htbp]
  \centering
     \subfigure[Real parameter]{
         \centering
         \includegraphics[width = 10cm]{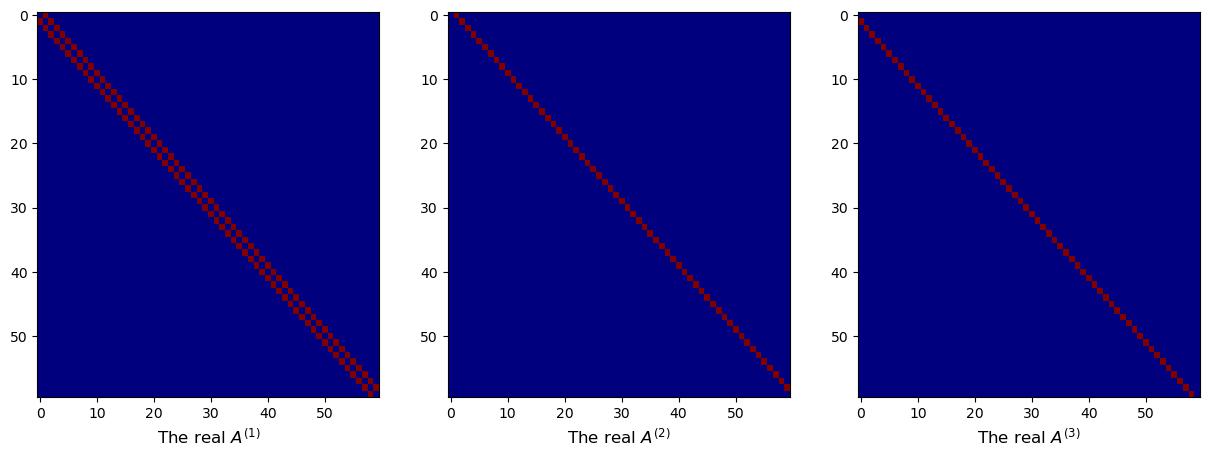}
         \label{figure.heat_1}
     }
     \subfigure[Post-selection estimator]{
         \centering
         \includegraphics[width = 10cm]{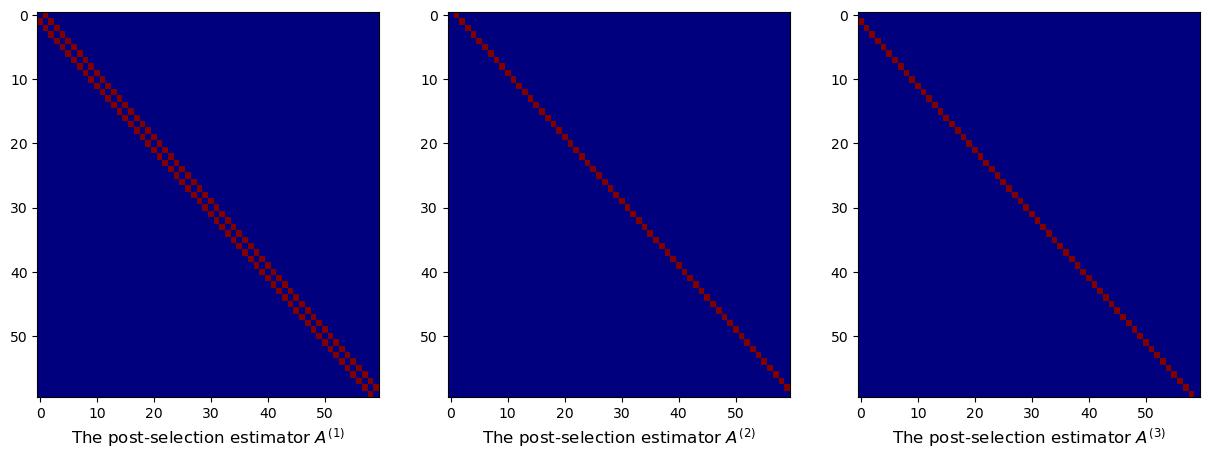}
         \label{figure.heat_2}
     }
     \subfigure[Column-wise Lasso estimator]{
         \centering
         \includegraphics[width = 10cm]{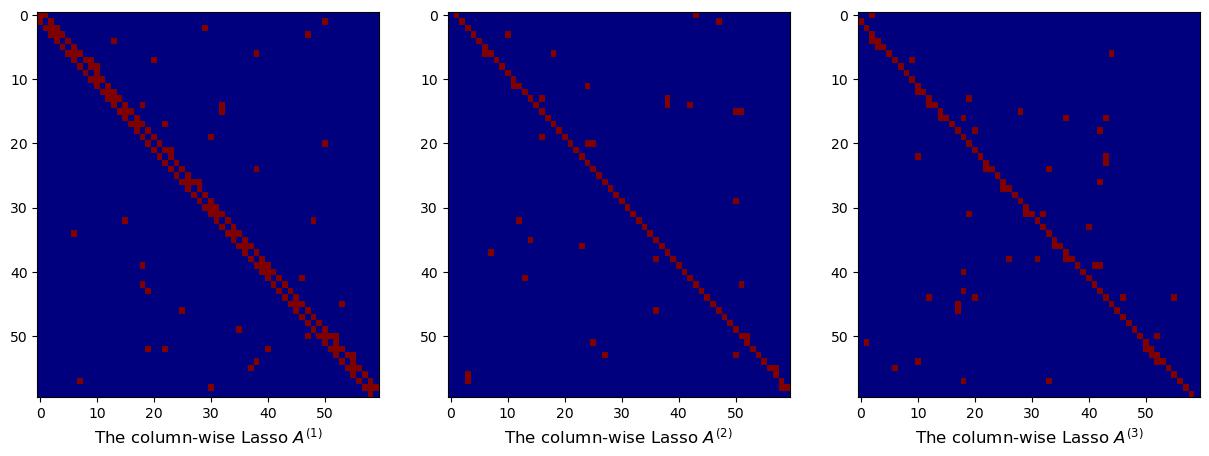}
         \label{figure.heat_3}
     }
     \caption{Figure \ref{figure.heat_1}, \ref{figure.heat_2}, and \ref{figure.heat_3} respectively depict
     the positions of the actual non-zero parameters and the corresponding estimated positions
     based on the post-selection estimator and the column-wise Lasso algorithm. These observations are generated using the settings of Experiment 9.}
     \label{figure.heat}
\end{figure}

\begin{figure}[htbp]
  \centering
  \subfigure[Independent]{
    \includegraphics[width = 9cm]{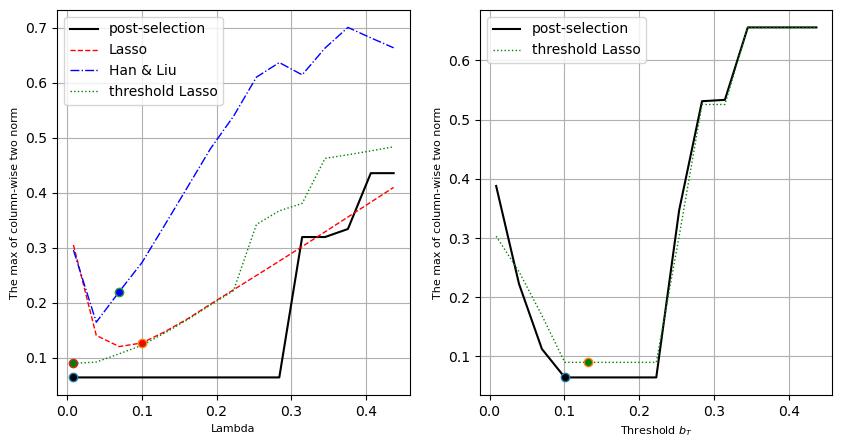}
    \label{figure.3Ind}
  }
  \subfigure[Product normal] {
    \includegraphics[width=9cm]{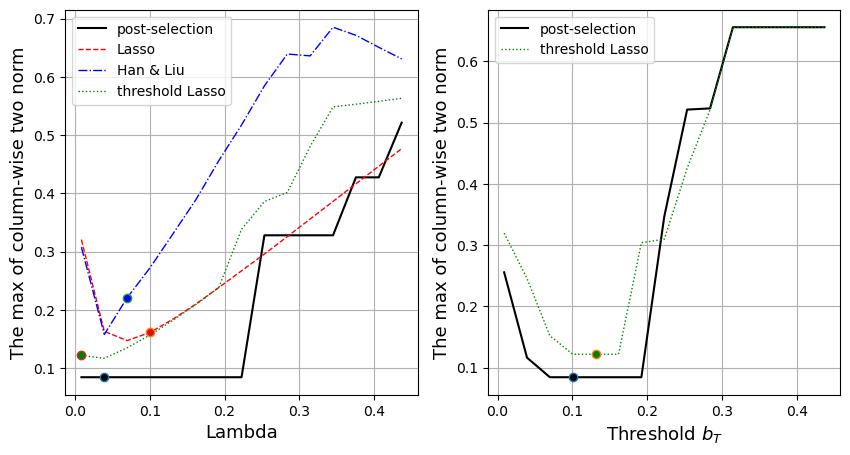}
    \label{figure.3Prd}
  }
  \subfigure[Non-stationary]{
    \includegraphics[width=9cm]{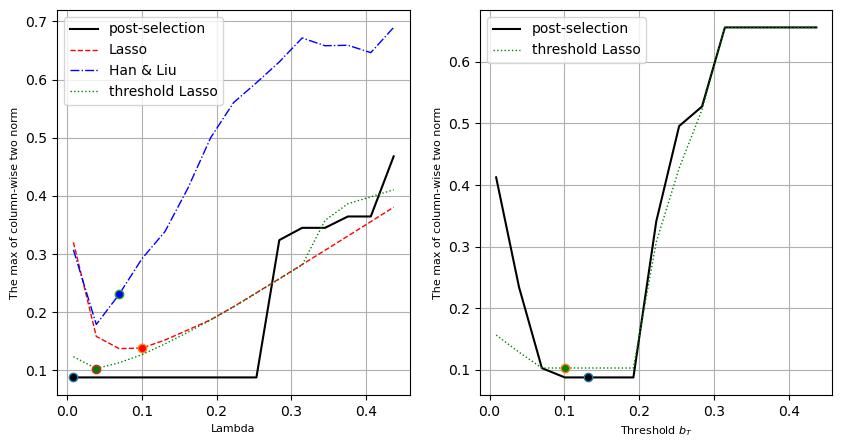}
    \label{figure.3Non}
  }
  \caption{The performance of various algorithms concerning different choices of hyperparameters.
  Figure \ref{figure.3Ind}, Figure \ref{figure.3Prd}, and Figure \ref{figure.3Non}
  respectively demonstrate the performance of algorithms when the innovations are independent, product normal,
  and non-stationary. The observations are generated using the AR(3) model, while the results for AR(1) and AR(2) situations are available
  in the online supplement.
  In these figures, the dots represent the optimal parameters selected by the method presented in Section \ref{section.numerical}.
  In the case of the post-selection algorithm and the threshold Lasso,
  the left figure plots their performance with $b_T$ as the optimal parameter, while the right figure plots their performance with $\lambda$ as the optimal parameter.}
  \label{figure.performance_sec}
\end{figure}

\begin{table}[htbp]
  \centering
  \caption{Experiment parameters selected based on methods presented in Section \ref{section.numerical}. The sample size is chosen to be 1500.}
  \scriptsize
  \begin{tabular}{c c c c c c c c c}
    \hline\hline
  Experiment\#       &   Model & Innovation     & Method         &  $d$           &  $p$   & $\lambda$ & $b_T$  &$k_T$\\
  \hline
  1            &  VAR(1)  & Independent    & Post-selection & 80                   & 1      & 0.009     & 0.131  & 1.638\\
               &         &                & Lasso          &                      &        & 0.070      \\
               &         &                & Han \& Liu     &                            &        & 0.070      \\
               &         &                & Threshold Lasso&                           &          & 0.131 & 0.070\\
               \hline
  2            & VAR(1)   & Product normal & Post-selection & 80                         &   1      & 0.009  & 0.162 & 2.054\\
               &         &                & Lasso          &                         &         & 0.070\\
               &          &                 & Han \& Liu  &                             &        & 0.070\\
               &          &               &  Threshold Lasso &                          &       & 0.162 & 0.070\\
               \hline
  3            & VAR(1)   & Nonstationary  & Post-selection &   80                    &  1       & 0.009 &0.131 & 1.895\\
               &          &               & Lasso          &                                 &           &
               0.070\\
               &          &               & Han \& Liu     &                                 &           & 0.070\\
               &          &               &  Threshold Lasso                                 &              &            & 0.009 & 0.131\\
               \hline
  4            & VAR(2)   & Independent   & Post-selection     & 70  & 2 & 0.009 & 0.131 & 1.621\\
               &          &               & Lasso             &         &   & 0.100\\
               &           &               & Han \& Liu        &          &   & 0.070\\
               &             &               & Threshold Lasso  &         &   & 0.009 & 0.131\\
  \hline
  5            &  VAR(2)  & Product normal  & Post-selection & 70                   & 2   & 0.039 & 0.131 & 2.121\\
               &         &                 & Lasso          &                         &    & 0.100 \\
               &         &                & Han \& Liu       &                           &   & 0.070\\
               &          &                 & Threshold Lasso &                             &   & 0.009 & 0.131\\
  \hline
  6            & VAR(2)   & Nonstationary   & Post-selection  & 70   & 2  & 0.009 & 0.131 & 1.924\\
               &         &                 & Lasso           &           & &  0.100\\
               &         &                 & Han \& Liu      &            &  & 0.070\\
               &        &                  & Threshold Lasso &             &  & 0.009 & 0.131\\
  \hline
  7            &  VAR(3)  & Independent    & Post-selection & 60                  & 3      & 0.009     & 0.100  & 1.596\\
               &         &                & Lasso          &                            &        & 0.100\\
               &         &                & Han \& Liu     &                            &        & 0.070\\
               &         &                & Threshold Lasso&                            &        & 0.009 & 0.131\\
               \hline
  8            &  VAR(3)  & Product normal & Post-selection & 60                   & 3            & 0.039     & 0.100 & 2.150\\
               &         &                & Lasso          &                            &         & 0.100     &       &     \\
               &         &                & Han \& Liu     &                            &              & 0.070     &       &     \\
               &         &                & Threshold Lasso &                           &              & 0.009     & 0.131 &     \\
               \hline
  9            & VAR(3)   & Nonstationary  & Post-selection  &  60                  & 3       & 0.009  &  0.131  & 1.878\\
               &         &                & Lasso           &                           &         & 0.100  \\
               &          &               & Han \& Liu      &                           &        & 0.070\\
               &        &                 & Threshold Lasso                               &              &      & 0.039 & 0.100\\
    \hline\hline
  \end{tabular}
  \label{table.experiment_parameter}
\end{table}

\begin{table}[t]
  \centering
  \caption{The performance of different estimation procedures and bootstrap algorithms. $C_1, C_2, \kappa$ respectively represent the column-wise one norm, the column-wise
  two norm, and the model misspecification. $C_1, C_2,$ and $\kappa$ are derived from the average of 300 simulations.}
  \scriptsize
  \begin{tabular}{c c c c c c c c c}
    \hline\hline
   Experiment\#        & Algorithm       & $C_1$  &   $C_2$ & $\kappa$ & \multicolumn{2}{c}{Algorithm \ref{algorithm.bootstrap}} & \multicolumn{2}{c}{AR bootstrap}\\
                    &                             &             &              &                        & Coverage & Length           & Coverage & Length\\
                    \hline
   1             &  Post-selection & 0.073     & 0.098     & 0.01          &                         $95\%$   & 0.167            & $95\%$  & 0.170\\
                 & Lasso           & 0.133     & 0.262     & 161.34\\
                 & Han \& Liu      & 0.134     & 0.263     & 6059.76\\
                 & Threshold Lasso & 0.093     & 0.123     & 0.01\\
                 \hline
   2             & Post-selection  & 0.088     & 0.114     & 0.03 & $98\%$ & 0.196            & 100\% & 0.402\\
                 & Lasso           & 0.153     & 0.304     & 161.44\\
                 & Han \& Liu      & 0.158     & 0.312     & 6058.44\\
                 & Threshold Lasso & 0.111     & 0.145     & 0.03           &                            \\
   \hline
   3             & Post-selection  & 0.080     & 0.105     & 0.0 & $94\%$ & 0.185 & $94\%$ & 0.177\\
                 & Lasso           & 0.146     & 0.287     & 161.18\\
                 & Han \& Liu      & 0.151     & 0.296     & 6057.09\\
                 & Threshold Lasso & 0.101     & 0.133     & 0.0\\
  \hline
   4             &  Post-selection  & 0.078   & 0.124     & 0.0 & $97\%$ & 0.162 & $89\%$ & 0.154\\
                 &  Lasso           & 0.148   & 0.321    & 210.58\\
                 & Han \& Liu       & 0.326   & 0.810  & 8983.81\\
                 & Threshold Lasso  & 0.106   & 0.165   & 0.0\\
  \hline
   5             &   Post-selection & 0.086   & 0.136   & 0.0 & 91\% & 0.181 & 82\% & 0.163\\
                 &   Lasso          & 0.163   & 0.353    & 210.46\\
                 &   Han \& Liu     & 0.329   & 0.804    & 8997.37\\
                 & Threshold Lasso  & 0.126   & 0.198    & 0.16\\
   \hline
   6             & Post-selection   & 0.086   & 0.136 & 0.11 & 97\% & 0.172 & 86\% & 0.159\\
                 & Lasso            & 0.154   & 0.340  & 210.18\\
                 & Han \& Liu       & 0.331   & 0.811  & 8986.21\\
                 & Threshold Lasso  & 0.117   & 0.183   & 0.11\\
   \hline

   7             &  Post-selection  & 0.076  & 0.135    & 0.39 & $92\%$ & 0.141 & $89\%$ & 0.138\\
                 &  Lasso           & 0.143  & 0.360       & 252.53\\
                 &  Han \& Liu      & 0.216  & 0.608       & 9758.25\\
                 &  Threshold Lasso & 0.102  & 0.180       & 0.0\\
  \hline
   8             &  Post-selection  & 0.079  & 0.140       & 0.06   & $93\%$   & 0.153 & $86\%$ & 0.137\\
                 &  Lasso           & 0.148  & 0.373       & 241.96&\\
                 &  Han \& Liu      & 0.222  & 0.619       & 9771.78\\
                 &  Threshold Lasso & 0.112  & 0.195       & 0.04\\
   \hline
   9             &  Post-selection  & 0.074  & 0.133       & 0.00   & $90\%$    & 0.148  & $80\%$ & 0.135\\
                 &  Lasso           & 0.145  & 0.372       & 242.88\\
                 & Han \& Liu       & 0.219  & 0.620       & 9770.23\\
                 & Threshold Lasso  & 0.106  & 0.190       & 0.02\\
    \hline\hline
  \end{tabular}
 \label{table.numerical_res}
\end{table}

\textbf{Real data example: } We analyze the annual global electricity generation data
presented in \url{https://www.kaggle.com/datasets/akhiljethwa/global-electricity-statistics},
which records the annual electricity generation of different countries from 1980 to 2021.
Our focus is on studying the relationships in electricity generation among ASEAN countries.
Since the original data is not stationary, we perform a $\log$ transformation and
remove its linear trend before conducting statistical inference.
Figure \ref{figure.detrended} presents the transformed data.
For each country, we employ the augmented Dickey-Fuller test to verify the stationarity
of the time series.

The estimation results are presented in Figure \ref{figure.AR_coef} and Table \ref{table.real_result}.
As shown in Figure \ref{figure.AR_coef}, the electricity generation in Cambodia and Laos strongly
depends on their respective previous year's electricity generations. Besides, the electricity generation in Thailand
negatively depends on the previous year's electricity generation in Laos. This phenomenon can be attributed to the
export of hydropower generated from dams on the Mekong River in Laos to fulfill the electricity demand in Thailand.

In table \ref{table.real_result}, the width of the confidence interval
generated by the dependent wild bootstrap is different from that generated by
the autoregressive sieve bootstrap, suggesting violations in the assumption of
independent and identically distributed innovations. Since the dependent wild bootstrap algorithm
is robust to errors' dependency or non-stationarity, adopting it provides the practitioners with a
confidence interval that closely reflects the real-world situations.

\begin{figure}
  \centering
  \subfigure[Detrended $\log$ electricity generation]{
      \centering
     \includegraphics[width = 8cm]{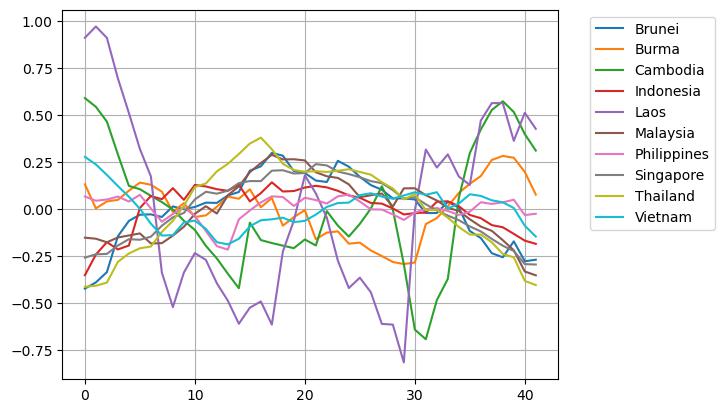}
     \label{figure.detrended}
  }
  \subfigure[Estimated autoregressive coefficients]{
     \centering
     \includegraphics[width = 6cm]{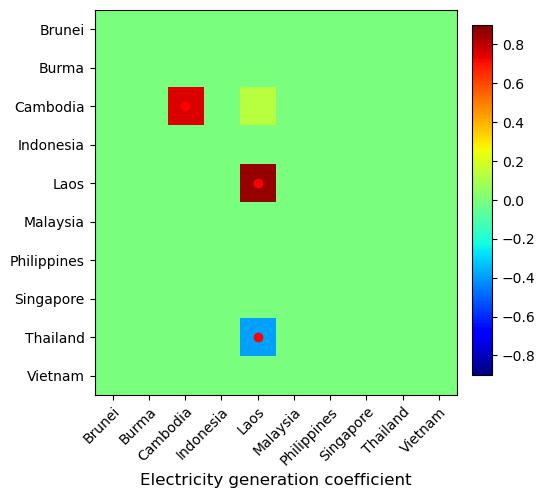}
     \label{figure.AR_coef}
  }
  \caption{The detrended $\log$ transform of the annual electricity generation data in ASEAN countries,
  and the estimated AR coefficients. Red dots indicate statistically significant coefficients, i.e., that $0$ is not contained within their confidence intervals.}
  \label{}
\end{figure}

\begin{table}
  \centering
  \caption{Description and results of the annual global electricity consumption data}
  \scriptsize
  \begin{tabular}{c c c c c c c c}
    \hline
    \hline
    Model & $d$ & Sample size & $\lambda$ & $b_T$  & $k_T$ & \multicolumn{2}{c}{Length}\\
          &     &             &           &        &       & Algorithm \ref{algorithm.bootstrap} & AR bootstrap\\
    VAR(1)& 10   & 42          & 0.063     & 0.063  & 3.775 & 0.271   & 1.724\\
    \hline
    \hline
  \end{tabular}
  \label{table.real_result}
\end{table}

\section{Conclusion}
\label{section.conclusion}
This paper focuses on the statistical inference of a sparse vector autoregressive time series
without assuming that its innovations are i.i.d. To achieve this goal, it adopts the notion
of $(m,\alpha)-$short range dependent random variables presented in \cite{zhangPolitis} and  \cite{MR3779697} to model the time series.
After that, it utilizes a post-selection estimator to fit the time series, which maintains sparsity in the autoregressive coefficient matrices.
To help statisticians getting rid of complex calculations, this paper modifies the dependent wild bootstrap method
introduced in \cite{MR2656050}. While the original dependent wild bootstrap was designed to analyze first-order statistics, such as the sample mean,
of a stationary time series; our work adapts it to second-order statistics,
such as the sample autoregressive coefficients, of a non-stationary time series.
This paper theoretically derives the asymptotic distribution of the proposed estimator
and establishes the validity of the second-order wild bootstrap algorithm (Algorithm \ref{algorithm.bootstrap}).
Numerical simulations and real-life data experiments confirm the correctness
of the asymptotic distribution of the estimator and the validity of the bootstrap algorithm.

One of the  essential conditions in the literature on (vector) autoregressive time series is
that the innovations in the time series are independent or i.i.d. However, this condition is often too
restrictive for real-life data to satisfy or too hard to verify through hypothesis tests.
Worse still, as demonstrated in Table \ref{table.numerical_res}, ignoring this issue can result
in excessively wide confidence intervals or significant under-coverage. By allowing for the presence of
dependent and non-stationary innovations in the autoregressive time series, the confidence intervals
derived in our work should better reflect real-world situations compared to
the methods in the literature that rely on the i.i.d. innovations.

\printbibliography
\clearpage
\appendix
\numberwithin{equation}{section}
\numberwithin{lemma}{section}
\numberwithin{figure}{section}
\section{Proofs of theorems in section \ref{section.m_alpha_dependent}
}
\begin{proof}[proof of lemma \ref{lemma.operation}]
1. Notice that $\boldsymbol \eta^{(t)}_k = \boldsymbol\epsilon^{(t - j)}_{l}$, here $j = \lfloor\frac{k - 1}{d}\rfloor$ and $l = k - d\times j$, $\lfloor x\rfloor$ represents the largest integer that is smaller than or equal to $x$. Therefore,  $\boldsymbol \eta^{(t)}_k$ is a function of $\cdots, e_{t - 1}, e_t$ and $\mathbf{E}\boldsymbol \eta^{(t)}_k = 0$. Besides,
\begin{align*}
\sup_{t\in\mathbf{Z}, k = 1,\cdots, pd}\Vert\boldsymbol\eta^{(t)}_k \Vert_m\leq \sup_{t\in\mathbf{Z}, i = 1,\cdots, d}\Vert\boldsymbol\epsilon^{(t)}_{i}\Vert_m
 = O(1).
\end{align*}
Similar to eq.\eqref{eq.replace_epsilon}, define
$\boldsymbol \eta^{(t)}_k = h_k^{(t, T)}(\cdots, e_{t - 1}, e_t)$, $\boldsymbol\eta^{(t, s)}_k = h_k^{(t,T)}(\cdots, e_{t - s - 1}, e^\dagger_{t - s}, e_{t - s + 1},\cdots, e_t)$, and $\lambda_k^{(t, s)} = \Vert\boldsymbol \eta^{(t)}_k - \boldsymbol\eta^{(t, s)}_k\Vert_m$, $\lambda^{(s)} = \sup_{k = 1,\cdots, pd, t\in\mathbf{Z}}\lambda_k^{(t, s)}$. Then
\begin{align*}
\lambda_k^{(t, s)} = \Vert\boldsymbol\epsilon^{(t - j)}_{l} - \boldsymbol\epsilon^{(t - j, s - j)}_{l}\Vert_m\leq \delta^{(s - j)}\leq \sum_{q = 0}^{p - 1}\delta^{(s - q)}\\
\Rightarrow \sum_{s = k}^\infty \lambda^{(s)}\leq \sum_{j = 0}^{p - 1}\sum_{s = k}^\infty\delta^{(s - j)}\leq C\sum_{j = 0}^{p - 1}\frac{1}{(1\vee (1 + k - j))^\alpha}
\leq \frac{C_0}{(1 + k)^\alpha},
\end{align*}
and we prove the first result.

2. Notice that $\mathbf{E}\boldsymbol \gamma^{(t)}_{i} = \mathbf{E}\boldsymbol \zeta^{(t)}_{i} = 0$. Besides,
\begin{align*}
\Vert\boldsymbol\epsilon^{(t)}_i\boldsymbol\epsilon^{(t)}_j - \mathbf{E}\boldsymbol\epsilon^{(t)}_i\boldsymbol\epsilon^{(t)}_j\Vert_{m / 2}
\leq 2\Vert\boldsymbol\epsilon^{(t)}_i\Vert_m\times \Vert\boldsymbol\epsilon^{(t)}_j\Vert_{m}\leq C\\
\text{and } \Vert\boldsymbol\epsilon^{(t - 1)}_i \boldsymbol\epsilon^{(t)}_j - \mathbf{E}\boldsymbol\epsilon^{(t - 1)}_i \boldsymbol\epsilon^{(t)}_j\Vert_{m / 2}
\leq 2\Vert\boldsymbol\epsilon^{(t - 1)}_i\Vert_m\times \Vert\boldsymbol\epsilon^{(t)}_j\Vert_m\leq C.
\end{align*}
For $\boldsymbol \gamma^{(t)}_{i}$ and $\boldsymbol \zeta^{(t)}_{i}$ are functions of $\cdots, e_{t - 1}, e_t$, define
\begin{align*}
\boldsymbol\gamma^{(t)}_{i} = H_i^{(t, T)}(\cdots, e_{t - 1}, e_t)\ \text{and }  \boldsymbol\zeta^{(t)}_{i} = G_i^{(t, T)}(\cdots, e_{t - 1}, e_t),\\
\boldsymbol\gamma^{(t, s)}_{i} = H_i^{(t, T)}(\cdots, e_{t - s - 1}, e^\dagger_{t - s}, e_{t - s + 1}, \cdots, e_{t - 1}, e_t),\\
\boldsymbol\zeta^{(t, s)}_{i} = G_i^{(t, T)}(\cdots, e_{t - s - 1}, e^\dagger_{t - s}, e_{t - s + 1},\cdots, e_{t - 1}, e_t),\\
\psi_i^{(t, s)} = \Vert \boldsymbol\gamma^{(t)}_{i} - \boldsymbol\gamma^{(t, s)}_{i}\Vert_{m/2}\ \text{and }
\psi^{(s)} = \sup_{t\in\mathbf{Z}, i = 1,\cdots, d^2}\psi_i^{(t, s)},\\
\phi_i^{(t, s)} = \Vert \boldsymbol\zeta^{(t)}_{i} - \boldsymbol\zeta^{(t, s)}_{i}\Vert_{m/2}\ \text{and  }
\phi^{(s)} = \sup_{t\in\mathbf{Z}, i = 1,\cdots, d^2}\phi_i^{(t, s)}.
\end{align*}
Then we have
\begin{align*}
\boldsymbol \gamma^{(t,s)}_{(i - 1)\times d + j} = \boldsymbol\epsilon^{(t,s)}_i\boldsymbol\epsilon^{(t,s)}_j - \mathbf{E}\boldsymbol\epsilon^{(t)}_i\boldsymbol\epsilon^{(t)}_j\ \text{and } \boldsymbol \zeta^{(t,s)}_{(i - 1)\times d + j} = \boldsymbol\epsilon^{(t - 1, s  - 1)}_i \boldsymbol\epsilon^{(t,s)}_j - \mathbf{E}\boldsymbol\epsilon^{(t - 1)}_i \boldsymbol\epsilon^{(t)}_j\\
\Rightarrow \psi_{(i - 1)\times d + j}^{(t, s)}\leq \Vert\boldsymbol\epsilon^{(t)}_i\boldsymbol\epsilon^{(t)}_j - \boldsymbol\epsilon^{(t, s)}_i\boldsymbol\epsilon^{(t)}_j\Vert_{m/2} + \Vert\boldsymbol\epsilon^{(t,s)}_i\boldsymbol\epsilon^{(t)}_j - \boldsymbol\epsilon^{(t,s)}_i\boldsymbol\epsilon^{(t,s)}_j\Vert_{m/2}\\
\leq \Vert\boldsymbol\epsilon^{(t,s)}_i - \boldsymbol\epsilon^{(t)}_i\Vert_m\times \Vert\boldsymbol\epsilon^{(t)}_j\Vert_m
+ \Vert\boldsymbol\epsilon^{(t,s)}_i\Vert_m\times \Vert\boldsymbol\epsilon^{(t,s)}_j - \boldsymbol\epsilon^{(t)}_j\Vert_m\leq C\delta^{(s)}\Rightarrow
\sum_{s = k}^\infty\psi^{(s)}\leq \frac{C_0}{(1 + k)^\alpha}\\
\text{and } \phi_{(i - 1)\times d + j}^{(t, s)}\leq \Vert\boldsymbol\epsilon^{(t - 1)}_i \boldsymbol\epsilon^{(t)}_j - \boldsymbol\epsilon^{(t - 1, s - 1)}_i \boldsymbol\epsilon^{(t)}_j\Vert_{m / 2} + \Vert\boldsymbol\epsilon^{(t - 1, s - 1)}_i \boldsymbol\epsilon^{(t)}_j - \boldsymbol\epsilon^{(t - 1, s - 1)}_i \boldsymbol\epsilon^{(t,s)}_j\Vert_{m / 2}\\
\leq \Vert\boldsymbol\epsilon^{(t - 1)}_i - \boldsymbol\epsilon^{(t - 1, s - 1)}_i \Vert_m\times \Vert\boldsymbol\epsilon^{(t)}_j\Vert_m
+ \Vert\boldsymbol\epsilon^{(t - 1, s - 1)}_i\Vert_m\times \Vert\boldsymbol\epsilon^{(t)}_j - \boldsymbol\epsilon^{(t,s)}_j\Vert_m\leq C(\delta^{(s - 1)} + \delta^{(s)})\\
\Rightarrow \sum_{s = k}^\infty\phi^{(s)}\leq \frac{C_1}{(1 + k)^\alpha},
\end{align*}
and we prove 2.

3. Notice that $\mathbf{E}\sum_{j = 1}^{d} \mathbf{L}_{ij}\boldsymbol\epsilon^{(t)}_j = 0$,
\begin{align*}
\Vert\sum_{j = 1}^{d} \mathbf{L}_{ij}\boldsymbol\epsilon^{(t)}_j\Vert_m\leq \sum_{j = 1}^d \vert\mathbf{L}_{ij}\vert\times\Vert\boldsymbol\epsilon^{(t)}_j\Vert_m\leq
\vert\mathbf{L}\vert_{L_1}\times \sup_{t\in\mathbf{Z}, j = 1,\cdots, d}\Vert\boldsymbol\epsilon^{(t)}_j\Vert_m = O(1),\\
\text{and } \Vert\sum_{j = 1}^{d} \mathbf{L}_{ij}\boldsymbol\epsilon^{(t)}_j - \sum_{j = 1}^{d} \mathbf{L}_{ij}\boldsymbol\epsilon^{(t,s)}_j\Vert_m
\leq \sum_{j = 1}^d \vert\mathbf{L}_{ij}\vert\times \Vert\boldsymbol\epsilon^{(t)}_j  -\boldsymbol\epsilon^{(t,s)}_j\Vert_m\leq
\vert\mathbf{L}\vert_{L_1}\times \delta^{(s)}.
\end{align*}
Therefore
$$
\sum_{s = k}^\infty\sup_{t\in\mathbf{Z}, i = 1,\cdots, c}\Vert\sum_{j = 1}^{d} \mathbf{L}_{ij}\boldsymbol\epsilon^{(t)}_j - \sum_{j = 1}^{d} \mathbf{L}_{ij}
\boldsymbol\epsilon^{(t,s)}_j\Vert_m\leq \frac{C_2}{(1 + k)^\alpha},
$$
and we prove 3.

4. From section 2.2 in \cite{MR1238940}, $\boldsymbol\beta^{(t)}  = \boldsymbol\epsilon^{(t)} + \sum_{k = 1}^\infty\mathbf{L}^k\boldsymbol\epsilon^{(t - k)}$, so $\mathbf{E}\boldsymbol\beta^{(t)} = 0$. For a random vector $\mathbf{y}\in\mathbf{R}^d$ such that $\Vert\mathbf{y}_i\Vert_m\leq C$,
$$
\Vert\sum_{j = 1}^d \mathbf{L}_{ij}\mathbf{y}_j\Vert_m\leq \sum_{\mathbf{L}_{ij}\neq 0}\vert\mathbf{L}_{ij}\vert\times \Vert\mathbf{y}_j\Vert_m\leq \max_{j = 1,\cdots, d}\Vert\mathbf{y}_j\Vert_m\times \max_{i = 1,\cdots,c}\sum_{\mathbf{L}_{ij}\neq 0}\vert\mathbf{L}_{ij}\vert
$$
For $\sum_{\mathbf{L}_{ij}\neq 0}\vert\mathbf{L}_{ij}\vert = \mathbf{e}_i\mathbf{L}\mathbf{f}_i$ with $\mathbf{e}_i = (\underbrace{0,0,\cdots, 0}_{i-  1}, 1, 0,\cdots,0)^T$, i.e., the coordinate-wise projection; and
$$
\mathbf{f}_i =
\begin{cases}
1\ \text{if } \mathbf{L}_{ij} > 0\\
0\ \text{if } \mathbf{L}_{ij} = 0\\
-1\ \text{if } \mathbf{L}_{ij} < 0
\end{cases}.
$$
This implies $\sum_{\mathbf{L}_{ij}\neq 0}\vert\mathbf{L}_{ij}\vert\leq \rho\vert\mathbf{e}_i\vert_2\times \vert\mathbf{f}_i\vert_2\leq \rho\sqrt{\vert\mathbf{L}_{i\cdot}\vert_0}$,
and correspondingly
\begin{equation}
\Vert\sum_{j = 1}^d \mathbf{L}_{ij}\mathbf{y}_j\Vert_m\leq C\rho\sqrt{\max_{i = 1,\cdots, d}\vert\mathbf{L}_{i\cdot}\vert_0}.
\label{eq.iterA}
\end{equation}
Apply induction, we have
\begin{align*}
\Vert(\mathbf{L}^k\boldsymbol\epsilon^{(t - k)})_i\Vert_m\leq \rho^k\times \max_{i = 1,\cdots, d}\vert\mathbf{L}_{i\cdot}\vert_0^{k/2}\times \max_{j = 1,\cdots, d}\Vert\boldsymbol\epsilon^{(t - k)}_j\Vert_m\\
\Rightarrow \Vert\boldsymbol\beta^{(t)}_i\Vert_m\leq \Vert\boldsymbol\epsilon^{(t)}_i\Vert_m + \sum_{k = 1}^\infty\Vert(\mathbf{L}^k\boldsymbol\epsilon^{(t - k)})_i\Vert_m\leq C
\end{align*}
for a constant $C$. Similarly,
\begin{align*}
\boldsymbol\beta^{(t,s)}_i  = \boldsymbol\epsilon^{(t,s)}_i + \sum_{k = 1}^s(\mathbf{L}^k\boldsymbol\epsilon^{(t - k, s - k)})_i,
\end{align*}
so from eq.\eqref{eq.iterA},
\begin{align*}
\Vert\boldsymbol\beta^{(t)}_i - \boldsymbol\beta^{(t,s)}_i\Vert_m\leq \Vert\boldsymbol\epsilon^{(t)}_i - \boldsymbol\epsilon^{(t,s)}_i\Vert_m + \sum_{l = 1}^s\Vert(\mathbf{L}^l(\boldsymbol\epsilon^{(t - l)} - \boldsymbol\epsilon^{(t - l, s - l)}))_i\Vert_m\\
\leq \delta^{(s)} + \sum_{l = 1}^s\rho^l\times \max_{i = 1,\cdots, d}\vert\mathbf{L}_{i\cdot}\vert_0^{l/2}\times\delta^{(s - l)}\\
\Rightarrow \sum_{s = k}^\infty \sup_{t\in\mathbf{Z}, i = 1,\cdots, d}\Vert\boldsymbol\beta^{(t)}_i - \boldsymbol\beta^{(t,s)}_i\Vert_m
\leq \sum_{s = k}^\infty\delta^{(s)} + \sum_{l = 1}^\infty \rho^l\max_{i = 1,\cdots, d}\vert\mathbf{L}_{i\cdot}\vert_0^{l/2}\sum_{s = l\vee k}^\infty\delta^{(s - l)}\\
\leq \frac{C}{(1 + k)^\alpha} + \sum_{l = 1}^\infty\rho^l\vert\mathbf{L}_{i\cdot}\vert_0^{l/2}\times \frac{C}{(1 + (k - l)\vee 0)^\alpha}.
\end{align*}
Since
\begin{align*}
\sum_{l = 1}^\infty\rho^l\vert\mathbf{L}_{i\cdot}\vert_0^{l/2}\times \frac{1}{(1 + (k - l)\vee 0)^\alpha}\leq \frac{1}{(1 + \frac{k}{2})^\alpha}\sum_{l = 1}^{\lfloor\frac{k}{2}\rfloor}\rho^l\times\vert\mathbf{L}_{i\cdot}\vert_0^{l/2} + \sum_{l = \lfloor\frac{k}{2}\rfloor + 1}^\infty\rho^l\vert\mathbf{L}_{i\cdot}\vert_0^{l/2}
\leq \frac{C}{(1 + k)^\alpha},
\end{align*}
we prove 4.
\end{proof}

\begin{proof}[proof of theorem \ref{theorem.MA}]
1. From lemma 1 and (B.6), (B.7) in \cite{zhangPolitis} , we have
\begin{align*}
\Vert \boldsymbol \Pi_{i\cdot} \mathbf{b}\Vert_m = \Vert\sum_{t = 1}^T \boldsymbol \epsilon^{(t)}_i\mathbf{b}_t\Vert_m\leq C\vert\mathbf{b}\vert_2\times
\left(\sup_{t\in\mathbf{Z}, i = 1,\cdots, d}\Vert \boldsymbol \epsilon^{(t)}_i\Vert_m + \sum_{s = 0}^\infty\delta^{(s)}\right)\leq C_1\vert\mathbf{b}\vert_2\\
\text{and } \Vert\boldsymbol \Pi_{i\cdot}^{(s)} \mathbf{b}\Vert_m = \Vert\sum_{t = 1}^T(\boldsymbol \epsilon^{(t)}_i - \mathbf{E}\boldsymbol \epsilon^{(t)}_i|\mathcal{F}^{(t, s)})\mathbf{b}_t\Vert_m\leq C\vert\mathbf{b}\vert_2\times\sum_{k = s}^\infty\delta^{(k)}\leq C_1\vert\mathbf{b}\vert_2\times (1 + s)^{-\alpha}
\end{align*}
Therefore,
\begin{align*}
\Vert\ \vert\boldsymbol \Pi \mathbf{b}\vert_\infty\ \Vert_m
= \Vert\ \max_{i = 1,\cdots, d}\vert\Pi_{i\cdot} \mathbf{b}\vert\ \Vert_m\leq d^{1/m}\max_{i = 1,\cdots, d}\Vert\Pi_{i\cdot} \mathbf{b}\Vert_m\leq Cd^{1/m}
\times \vert\mathbf{b}\vert_2,\\
\Vert\ \vert\boldsymbol \Pi^{(s)} \mathbf{b}\vert_\infty\ \Vert_m = \Vert\ \max_{i = 1,\cdots, d}
\vert\Pi_{i\cdot}^{(s)} \mathbf{b}\vert\ \Vert_m\leq Cd^{1/m}\vert\mathbf{b}\vert_2\times (1 + s)^{-\alpha}.
\end{align*}

2. From lemma  B.1 and eq.(B.18) in \cite{zhangPolitis},
\begin{equation}
\begin{aligned}
\vert
Prob\left(\vert\sqrt{T}\overline{\boldsymbol\epsilon}\vert_\infty\leq x\right) - Prob\left(\vert\boldsymbol \xi\vert_\infty \leq x\right)
\vert\\
\leq \sup_{x\in\mathbf{R}}\vert\mathbf{E}h_{\psi,\psi,x}(\sqrt{T}\overline{\boldsymbol\epsilon}_1,\cdots, \sqrt{T}\overline{\boldsymbol\epsilon}_d) - \mathbf{E}h_{\psi,\psi,x}(\boldsymbol \xi_1,\cdots, \boldsymbol \xi_d)\vert\\
+ Ct(1 + \sqrt{\log(d)} + \sqrt{\vert\log(t)\vert}),
\end{aligned}
\label{eq.fir_part}
\end{equation}
here $t = \frac{1 + \log(2d)}{\psi}$ and $h_{\psi,\psi,x}(\cdot)$ coincides with \cite{zhangPolitis}. For
\begin{align*}
\Vert\sqrt{T}\overline{\boldsymbol\epsilon}_i - \frac{1}{\sqrt{T}}\sum_{t = 1}^T
\mathbf{E}\boldsymbol\epsilon^{(t)}_i|\mathcal{F}^{(t,s)}\Vert_m\leq C(1 + s)^{-\alpha},
\end{align*}
we have
\begin{equation}
\begin{aligned}
\vert\mathbf{E}h_{\psi,\psi,x}(\sqrt{T}\overline{\boldsymbol\epsilon}_1,\cdots, \sqrt{T}\overline{\boldsymbol\epsilon}_d) -
\mathbf{E}h_{\psi,\psi,x}\left(\frac{1}{\sqrt{T}}\sum_{t = 1}^T\mathbf{E}\boldsymbol\epsilon^{(t)}_1|\mathcal{F}^{(t,s)}, \cdots, \frac{1}{\sqrt{T}}\sum_{t = 1}^T\mathbf{E}\boldsymbol\epsilon^{(t)}_d|\mathcal{F}^{(t,s)}\right)\vert\\
\leq C\psi\Vert\
\max_{i = 1,\cdots, d}\vert\sqrt{T}\overline{\boldsymbol\epsilon}_i - \frac{1}{\sqrt{T}}\sum_{t = 1}^T\mathbf{E}\boldsymbol\epsilon^{(t)}_i|\mathcal{F}^{(t,s)}\vert\
\Vert_m\leq C_1\psi d^{1/m}\times (1 + s)^{-\alpha}.
\label{eq.second_part}
\end{aligned}
\end{equation}
For a given integer $q > s$, define the `big-block' and the `small-block'
$$
\boldsymbol \gamma^{(k)} = \frac{1}{\sqrt{T}}\sum_{t = (q + s)\times (k - 1) + 1}^{((q+s)\times (k - 1) + q)\wedge T}\mathbf{E}\boldsymbol\epsilon^{(t)}|\mathcal{F}^{(t,s)}\ \text{and }
\boldsymbol \eta^{(k)} = \frac{1}{\sqrt{T}}\sum_{t = (q + s)\times (k - 1) + q + 1}^{((q + s)\times k)\wedge T}
\mathbf{E}\boldsymbol\epsilon^{(t)}|\mathcal{F}^{(t,s)}.
$$
Define $v = \lceil\frac{T}{q + s}\rceil$, here $\lceil x\rceil$ is the smallest integer that is larger than or equal to $x$. Then
$$
\frac{1}{\sqrt{T}}\sum_{t = 1}^T\mathbf{E}\boldsymbol\epsilon^{(t)}_i|\mathcal{F}^{(t,s)} = \sum_{k = 1}^v \boldsymbol\gamma^{(k)}_i + \sum_{k = 1}^v \boldsymbol \eta^{(k)}_i.
$$
Moreover, $\boldsymbol\gamma^{(k)}$ are mutually independent; and $\boldsymbol \eta^{(k)}$ are mutually independent. Since $\mathbf{E}\boldsymbol\epsilon^{(t)}|\mathcal{F}^{(t,s)} = \mathbf{E}\boldsymbol\epsilon^{(t)}|\mathcal{F}^{(t,0)} + \sum_{l = 1}^s\left(\mathbf{E}\boldsymbol\epsilon^{(t)}|\mathcal{F}^{(t,l)} - \mathbf{E}\boldsymbol\epsilon^{(t)}|\mathcal{F}^{(t, l - 1)}\right)$, from (B.7) in \cite{zhangPolitis},
\begin{equation}
\begin{aligned}
\Vert
\frac{1}{\sqrt{T}}\sum_{t = 1}^T\mathbf{E}\boldsymbol\epsilon^{(t)}_i|\mathcal{F}^{(t,s)} - \sum_{k = 1}^v \boldsymbol\gamma^{(k)}_i
\Vert_m = \Vert\sum_{k = 1}^v \boldsymbol\eta^{(k)}_i\Vert_m\leq \frac{C\sqrt{v\times s}}{\sqrt{T}}\\
\Rightarrow \sup_{x\in\mathbf{R}}
\vert
\mathbf{E}h_{\psi,\psi,x}\left(\frac{1}{\sqrt{T}}\sum_{t = 1}^T\mathbf{E}\boldsymbol\epsilon^{(t)}_1|\mathcal{F}^{(t,s)}, \cdots, \frac{1}{\sqrt{T}}\sum_{t = 1}^T\mathbf{E}\boldsymbol\epsilon^{(t)}_d|\mathcal{F}^{(t,s)}\right)\\
- \mathbf{E}h_{\psi,\psi,x}\left(\sum_{k = 1}^v \boldsymbol\gamma^{(k)}_1,\cdots, \sum_{k = 1}^v \boldsymbol\gamma^{(k)}_d\right)
\vert\\
\leq C\psi\times \Vert\ \max_{k = 1,\cdots, d}\vert\sum_{k = 1}^v \boldsymbol \eta^{(k)}_i\vert\ \Vert_m\leq \frac{C_1\psi\times d^{1/m}\times \sqrt{vs}}{\sqrt{T}}.
\end{aligned}
\label{eq.third_part}
\end{equation}
Define $\boldsymbol\zeta^{(t)*}_i, t = 1,\cdots, T, i = 1,\cdots, d$ as joint normal random variables such that $\mathbf{E}\boldsymbol\zeta^{(t)*}_i = 0$,
$Cov(\boldsymbol\zeta^{(t_1)*}_{i_1}, \boldsymbol\zeta^{(t_2)*}_{i_2}) = Cov(\mathbf{E}\boldsymbol\epsilon^{(t_1)}_{i_1}|\mathcal{F}^{(t,s)}, \mathbf{E}\boldsymbol\epsilon^{(t_2)}_{i_2}|\mathcal{F}^{(t,s)})$, and $\boldsymbol\zeta^{(t)*}_i$ is independent of $e_t, t\in\mathbf{Z}$. Define
\begin{align*}
\boldsymbol \gamma^{(k)*}_i = \frac{1}{\sqrt{T}}\sum_{t = (q + s)\times (k - 1) + 1}^{((q+s)\times (k - 1) + q)\wedge T}\boldsymbol\zeta^{(t)*}_i,
\end{align*}
we have $\mathbf{E}\boldsymbol\gamma^{(k)*} = 0$,
$Cov(\boldsymbol\gamma^{(k)*}_i, \boldsymbol\gamma^{(k)*}_j) = Cov(\boldsymbol\gamma^{(k)}_i, \boldsymbol\gamma^{(k)}_j)$, and $\boldsymbol\gamma^{(k)*}\in\mathbf{R}^d$ are mutually independent. Define $\boldsymbol\lambda^{(k)} = \sum_{l = 1}^{k - 1}\boldsymbol\gamma^{(l)} + \sum_{l = k + 1}^v\boldsymbol\gamma^{(l)*}$, then
$\boldsymbol\lambda^{(k)} + \boldsymbol\gamma^{(k)} = \boldsymbol\lambda^{(k + 1)} + \boldsymbol\gamma^{(k + 1)*}$. Moreover,
\begin{equation}
\begin{aligned}
\vert\mathbf{E}h_{\psi,\psi,x}\left(\sum_{k = 1}^v \boldsymbol\gamma^{(k)}_1,\cdots, \sum_{k = 1}^v \boldsymbol\gamma^{(k)}_d\right)
- \mathbf{E}h_{\psi,\psi,x}\left(\sum_{k = 1}^v \boldsymbol\gamma^{(k)*}_1,\cdots, \sum_{k = 1}^v \boldsymbol\gamma^{(k)*}_d\right)\vert\\
= \vert
\mathbf{E}h_{\psi,\psi,x}\left(\boldsymbol\lambda^{(v)}_1 + \boldsymbol\gamma^{(v)}_1, \cdots, \boldsymbol\lambda^{(v)}_d + \boldsymbol\gamma^{(v)}_d\right)
- \mathbf{E}h_{\psi,\psi,x}\left(\boldsymbol\lambda^{(1)}_1 + \boldsymbol\gamma^{(1)*}_1,\cdots, \boldsymbol\lambda^{(1)}_d + \boldsymbol\gamma^{(1)*}_d\right)
\vert\\
\leq \sum_{l = 1}^v\vert
\mathbf{E}h_{\psi,\psi,x}\left(\boldsymbol\lambda^{(l)}_1 + \boldsymbol\gamma^{(l)}_1, \cdots, \boldsymbol\lambda^{(l)}_d + \boldsymbol\gamma^{(l)}_d\right)
- \mathbf{E}h_{\psi,\psi,x}\left(\boldsymbol\lambda^{(l)}_1 + \boldsymbol\gamma^{(l)*}_1,\cdots, \boldsymbol\lambda^{(l)}_d + \boldsymbol\gamma^{(l)*}_d\right)
\vert.
\end{aligned}
\end{equation}
Similar to eq.(B.26) in \cite{zhangPolitis}, we have
\begin{equation}
\begin{aligned}
\vert
\mathbf{E}h_{\psi,\psi,x}\left(\boldsymbol\lambda^{(l)}_1 + \boldsymbol\gamma^{(l)}_1, \cdots, \boldsymbol\lambda^{(l)}_d + \boldsymbol\gamma^{(l)}_d\right)
- \mathbf{E}h_{\psi,\psi,x}\left(\boldsymbol\lambda^{(l)}_1 + \boldsymbol\gamma^{(l)*}_1,\cdots, \boldsymbol\lambda^{(l)}_d + \boldsymbol\gamma^{(l)*}_d\right)
\vert\\
\leq C\psi^3\times d^{3/m}\times \max_{s = 1,\cdots, d}\Vert\boldsymbol\gamma^{(l)}_s\Vert_m^3\leq C_1\psi^3d^{3/m}\times\left(\frac{\sqrt{q}}{\sqrt{T}}\right)^3\\
\Rightarrow \vert\mathbf{E}h_{\psi,\psi,x}\left(\sum_{k = 1}^v \boldsymbol\gamma^{(k)}_1,\cdots, \sum_{k = 1}^v \boldsymbol\gamma^{(k)}_d\right)
- \mathbf{E}h_{\psi,\psi,x}\left(\sum_{k = 1}^v \boldsymbol\gamma^{(k)*}_1,\cdots, \sum_{k = 1}^v \boldsymbol\gamma^{(k)*}_d\right)\vert\\
\leq Cv\times\psi^3d^{3/m}\left(\frac{\sqrt{q}}{\sqrt{T}}\right)^3\leq C_1\psi^3\times d^{3/m}\times \left(\frac{q}{T}\right)^{1/2}.
\end{aligned}
\label{eq.fif}
\end{equation}
Similar to (B.29) and (B.30) in \cite{zhangPolitis},
\begin{equation}
\begin{aligned}
\vert
\mathbf{E}h_{\psi,\psi,x}\left(\sum_{k = 1}^v \boldsymbol\gamma^{(k)*}_1,\cdots, \sum_{k = 1}^v \boldsymbol\gamma^{(k)*}_d\right)
- \mathbf{E}h_{\psi,\psi,x}\left(\boldsymbol \xi_1,\cdots, \boldsymbol \xi_d\right)
\vert
\leq C\psi\times d^{1/m}\times \max_{i = 1,\cdots, d}\Vert\sum_{k = 1}^v \boldsymbol \eta^{(k)}_i\Vert_2\\
 + \frac{C\psi^2}{T}\max_{i_1, i_2 = 1,\cdots, d}\vert\sum_{t_1 = 1}^T\sum_{t_2 = 1}^T Cov(\mathbf{E}\boldsymbol\epsilon^{(t_1)}_{i_1}|\mathcal{F}^{(t_1,s)}, \mathbf{E}\boldsymbol\epsilon^{(t_2)}_{i_2}|\mathcal{F}^{(t_2,s)}) - Cov(\boldsymbol \epsilon_{i_1}^{(t_1)}, \boldsymbol \epsilon_{i_2}^{(t_2)})\vert\\
 \leq \frac{C_1\psi\times d^{1/m}\sqrt{s}}{\sqrt{q}} + \frac{C_2\psi^2}{T}\sum_{\vert t_1 - t_2\vert\leq s}\Vert \boldsymbol \epsilon_{i_1}^{(t_1)} - \mathbf{E}\boldsymbol\epsilon^{(t_1)}_{i_1}|\mathcal{F}^{(t_1,s)}\Vert_m\\
  + \frac{C_2\psi^2}{T}\sum_{\vert t_1 - t_2\vert\leq s}\Vert \boldsymbol \epsilon_{i_2}^{(t_2)} - \mathbf{E}\boldsymbol\epsilon^{(t_2)}_{i_2}|\mathcal{F}^{(t_2,s)}\Vert_m
 + \frac{C_2\psi^2}{T}\sum_{\vert t_1 - t_2\vert > s}\vert Cov(\boldsymbol \epsilon_{i_1}^{(t_1)}, \boldsymbol \epsilon_{i_2}^{(t_2)})\vert\\
 \leq \frac{C_1\psi\times d^{1/m}\sqrt{s}}{\sqrt{q}} + C_3\psi^2(1 + s)^{1-\alpha} + C_3\psi^2\sum_{l = s + 1}^\infty\frac{1}{(1 + l)^\alpha}.
\end{aligned}
\label{eq.fourth}
\end{equation}
According to the assumption, $\alpha_d = \frac{m\alpha}{1 + 12\alpha} - \delta$ with $\delta>0$. Choose $\psi = \log^3(T)$ and $s = \lfloor T^{\alpha_s}\rfloor$, $q = \lfloor T^{\alpha_q}\rfloor$; here \begin{align*}
\alpha_s = 1 - \frac{12\alpha_d}{m} - \frac{10\delta}{m} = \frac{1}{1 + 12\alpha} + \frac{2\delta}{m} > 0\ \text{and } \alpha_q  =
1 - \frac{6\alpha_d}{m} - \frac{4\delta}{m} = \frac{1 + 6\alpha}{1 + 12\alpha} + \frac{2\delta}{m} > 0.
\end{align*}
Since
\begin{align*}
\frac{\alpha_d}{m} - \alpha\alpha_s = -\frac{\delta}{m}(1 +2\alpha) < 0,\ \frac{\alpha_d}{m} + \frac{\alpha_s}{2} - \frac{\alpha_q}{2} =  -\frac{2\alpha}{1 + 12\alpha} - \frac{\delta}{m}<0\\
\frac{3\alpha_d}{m} + \frac{\alpha_q}{2} - \frac{1}{2} = -\frac{2\delta}{m} < 0.
\end{align*}
From eq.\eqref{eq.fir_part}, eq.\eqref{eq.second_part}, eq.\eqref{eq.third_part}, eq.\eqref{eq.fif}, and eq.\eqref{eq.fourth}, we   prove 2.

3. Notice that
\begin{align*}
\vert
\frac{1}{T}\sum_{t_1 = 1}^T\sum_{t_2 = 1}^T K\left(\frac{t_1 - t_2}{k_T}\right)\boldsymbol\epsilon^{(t_1)}_i\boldsymbol\epsilon^{(t_2)}_j - T\times Cov(\overline{\boldsymbol\epsilon}_i, \overline{\boldsymbol\epsilon}_j)
\vert\\
\leq \vert\frac{1}{T}\sum_{t_1 = 1}^T\sum_{t_2 = 1}^T \left(1 - K\left(\frac{t_1 - t_2}{k_T}\right)\right)\mathbf{E}\boldsymbol\epsilon^{(t_1)}_i\boldsymbol\epsilon^{(t_2)}_j\vert\\
+ \vert\frac{1}{T}\sum_{t_1 = 1}^T\sum_{t_2 = 1}^TK\left(\frac{t_1 - t_2}{k_T}\right)\left(\boldsymbol\epsilon^{(t_1)}_i\boldsymbol\epsilon^{(t_2)}_j - \mathbf{E}\boldsymbol\epsilon^{(t_1)}_i\boldsymbol\epsilon^{(t_2)}_j\right)\vert.
\end{align*}
From section 0.9.7 in \cite{MR2978290},  (B.33) and (B.34) in \cite{zhangPolitis}, and eq.\eqref{eq.covS},
\begin{align*}
\vert\frac{1}{T}\sum_{t_1 = 1}^T\sum_{t_2 = 1}^T \left(1 - K\left(\frac{t_1 - t_2}{k_T}\right)\right)\mathbf{E}\boldsymbol\epsilon^{(t_1)}_i\boldsymbol\epsilon^{(t_2)}_j\vert
\leq \frac{C}{T}\sum_{t_1 = 1}^T\sum_{t_2 = 1}^T \left(1 - K\left(\frac{t_1 - t_2}{k_T}\right)\right)(1 + \vert t_1 - t_2\vert)^{-\alpha}\\
\leq C_1\sum_{s = 0}^\infty \left(1 - K\left(\frac{s}{k_T}\right)\right)(1 + s)^{-\alpha} = O(v_T).
\end{align*}
On the other hand,
\begin{align*}
\Vert\
\frac{1}{T}\sum_{t_1 = 1}^T\sum_{t_2 = 1}^TK\left(\frac{t_1 - t_2}{k_T}\right)\left(\boldsymbol\epsilon^{(t_1)}_i\boldsymbol\epsilon^{(t_2)}_j - \mathbf{E}\boldsymbol\epsilon^{(t_1)}_i\boldsymbol\epsilon^{(t_2)}_j\right)\
\Vert_{m/2}\\
\leq \frac{1}{T}\sum_{s = 0}^{T - 1}K\left(\frac{s}{k_T}\right)\Vert\sum_{t_2 = 1}^{T - s}\boldsymbol\epsilon^{(t_2 + s)}_i\boldsymbol\epsilon^{(t_2)}_j - \mathbf{E}\boldsymbol\epsilon^{(t_2 + s)}_i\boldsymbol\epsilon^{(t_2)}_j\Vert_{m / 2}\\
+ \frac{1}{T}\sum_{s = 0}^{T - 1}K\left(\frac{s}{k_T}\right)\Vert\sum_{t_1 = 1}^{T - s}\boldsymbol\epsilon^{(t_1)}_i\boldsymbol\epsilon^{(t_1 + s)}_j - \mathbf{E}\boldsymbol\epsilon^{(t_1)}_i\boldsymbol\epsilon^{(t_1 + s)}_j\Vert_{m / 2}.
\end{align*}
Similar to eq.(B.37) in \cite{MR4441125},
\begin{align*}
\Vert\sum_{t_2 = 1}^{T - s}\boldsymbol\epsilon^{(t_2 + s)}_i\boldsymbol\epsilon^{(t_2)}_j - \mathbf{E}\boldsymbol\epsilon^{(t_2 + s)}_i\boldsymbol\epsilon^{(t_2)}_j\Vert_{m / 2}
\leq \Vert\sum_{t_2 = 1}^{T - s}\mathbf{E}\boldsymbol\epsilon^{(t_2 + s)}_i\boldsymbol\epsilon^{(t_2)}_j|\mathcal{F}^{(t_2 + s, 0)} - \mathbf{E}\boldsymbol\epsilon^{(t_2 + s)}_i\boldsymbol\epsilon^{(t_2)}_j\Vert_{m/2}\\
+ \sum_{l = 1}^\infty\Vert\sum_{t_2 = 1}^{T - s}\left(\mathbf{E}\boldsymbol\epsilon^{(t_2 + s)}_i\boldsymbol\epsilon^{(t_2)}_j|\mathcal{F}^{(t_2 + s, l)} - \mathbf{E}\boldsymbol\epsilon^{(t_2 + s)}_i\boldsymbol\epsilon^{(t_2)}_j|\mathcal{F}^{(t_2 + s, l - 1)}\right)\Vert_{m/2}\\
\leq C\sqrt{T} + C\sqrt{T}\left(\sum_{l =1}^\infty\delta^{(l)} + \sum_{l = 1}^\infty \delta^{(l - s)}\right)\leq C_1\sqrt{T}.
\end{align*}
Therefore
\begin{equation}
\begin{aligned}
\Vert\
\frac{1}{T}\sum_{t_1 = 1}^T\sum_{t_2 = 1}^TK\left(\frac{t_1 - t_2}{k_T}\right)\left(\boldsymbol\epsilon^{(t_1)}_i\boldsymbol\epsilon^{(t_2)}_j - \mathbf{E}\boldsymbol\epsilon^{(t_1)}_i\boldsymbol\epsilon^{(t_2)}_j\right)\
\Vert_{m/2}
\leq \frac{C}{\sqrt{T}}\sum_{s = 0}^\infty K\left(\frac{s}{k_T}\right)\leq \frac{C_1 k_T}{\sqrt{T}}\\
\text{and } \Vert\ \max_{i,j = 1,\cdots, d}\vert\frac{1}{T}\sum_{t_1 = 1}^T\sum_{t_2 = 1}^TK\left(\frac{t_1 - t_2}{k_T}\right)\boldsymbol\epsilon^{(t_1)}_i\boldsymbol\epsilon^{(t_2)}_j - T\times Cov(\overline{\boldsymbol\epsilon}_i, \overline{\boldsymbol\epsilon}_j)\vert\ \Vert_{m/2}\\
\leq \max_{i,j = 1,\cdots, d}\vert\frac{1}{T}\sum_{t_1 = 1}^T\sum_{t_2 = 1}^T \left(1 - K\left(\frac{t_1 - t_2}{k_T}\right)\right)\mathbf{E}\boldsymbol\epsilon^{(t_1)}_i\boldsymbol\epsilon^{(t_2)}_j\vert\\
+ \Vert\ \max_{i,j = 1,\cdots, d}\vert\frac{1}{T}\sum_{t_1 = 1}^T\sum_{t_2 = 1}^TK\left(\frac{t_1 - t_2}{k_T}\right)\left(\boldsymbol\epsilon^{(t_1)}_i\boldsymbol\epsilon^{(t_2)}_j - \mathbf{E}\boldsymbol\epsilon^{(t_1)}_i\boldsymbol\epsilon^{(t_2)}_j\right)\vert\ \Vert_{m / 2}\\
= O\left(v_T + d^{4/m}\times \frac{k_T}{\sqrt{T}}\right),
\end{aligned}
\end{equation}
and we prove 3.
\end{proof}
\section{Proofs of theorems in section \ref{section.Theoretical}}
Concatenate $\mathbf{x}^{(t)}$ into $\mathbf{z}^{(t)} = (\mathbf{x}^{(t)\top}, \mathbf{x}^{(t -  1)\top}, \cdots, \mathbf{x}^{(t - p + 1)\top})^\top$
for $t\in\mathbf{Z}$. From lemma \ref{lemma.operation}, we know that $\mathbf{z}^{(t)}$ are $(m,\alpha)-$short range dependent random variables.
Moreover, define $\boldsymbol\Sigma^{(0)} = \mathbf{E}\mathbf{z}^{(p)}\mathbf{z}^{(p)\top}$ and
$\boldsymbol\Sigma^{(1)} = \mathbf{E}\mathbf{z}^{(p)}\mathbf{z}^{(p + 1)\top}$, then
$\mathbf{z}^{(t)}\mathbf{z}^{(t)\top} - \boldsymbol\Sigma^{(0)}$ and $\mathbf{z}^{(t)}\mathbf{z}^{(t + 1)\top} - \boldsymbol\Sigma^{(1)}$ should be $(\frac{m}{2},\alpha)-$short range dependent random variables. With the help of these observations, we are able to derive an important lemma, which serves as a foundation for the follow-up analysis.
\begin{lemma}
  Suppose assumption 1 to 3 in section \ref{section.Theoretical}. Then we have
  \begin{equation}\label{eq.covariances}
  \begin{aligned}
    \Vert\ \vert\frac{1}{T}\sum_{t = p}^{T - 1}\mathbf{z}^{(t)}\mathbf{z}^{(t)\top} - \boldsymbol\Sigma^{(0)}\vert_\infty\ \Vert_{m/2}
    = O\left(T^{\frac{4\alpha_d}{m} - \frac{1}{2}}\right)\\
    \text{and } \Vert\ \vert\frac{1}{T}\sum_{t = p}^{T - 1}\mathbf{z}^{(t)}\mathbf{z}^{(t + 1)\top} - \boldsymbol\Sigma^{(1)} \vert_\infty\ \Vert_{m/2} = O\left(T^{\frac{4\alpha_d}{m} - \frac{1}{2}}\right).
  \end{aligned}
  \end{equation}
  \label{lemma.covariance_matrix}
\end{lemma}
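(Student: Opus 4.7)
My plan is to reduce both claims to a direct application of Theorem \ref{theorem.MA} part 1 with vectorized ``second-order'' random variables. The idea is that each entry of $\mathbf{z}^{(t)}\mathbf{z}^{(t)\top} - \boldsymbol\Sigma^{(0)}$ (respectively $\mathbf{z}^{(t)}\mathbf{z}^{(t+1)\top} - \boldsymbol\Sigma^{(1)}$) is a centered product of coordinates of an $(m,\alpha)$-short range dependent sequence, so it should itself be $(m/2,\alpha)$-short range dependent, and then the sample-mean bound in Theorem \ref{theorem.MA} part 1 applies with a vector $\mathbf{b}$ whose entries are all $1/T$.

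First I would set up the vectorization. By Assumption 1, $\mathbf{x}^{(t)}$ is $(m,\alpha)$-short range dependent, and then Lemma \ref{lemma.operation} part 1 (applied with the same $p$) shows that the concatenation $\mathbf{z}^{(t)}\in\mathbf{R}^{pd}$ is also $(m,\alpha)$-short range dependent. Now define two $(pd)^2$-dimensional random vectors
\begin{align*}
\boldsymbol\gamma^{(t)}_{(i-1)pd+j} &= \mathbf{z}^{(t)}_i\mathbf{z}^{(t)}_j - \mathbf{E}\mathbf{z}^{(t)}_i\mathbf{z}^{(t)}_j,\\
\boldsymbol\zeta^{(t)}_{(i-1)pd+j} &= \mathbf{z}^{(t)}_i\mathbf{z}^{(t+1)}_j - \mathbf{E}\mathbf{z}^{(t)}_i\mathbf{z}^{(t+1)}_j.
\end{align*}
Exactly the argument used in Lemma \ref{lemma.operation} part 2 (applied to $\mathbf{z}^{(t)}$ instead of $\boldsymbol\epsilon^{(t)}$) shows that $\boldsymbol\gamma^{(t)},\boldsymbol\zeta^{(t)}$ are $(m/2,\alpha)$-short range dependent; the only change is that the ``$\boldsymbol\zeta$'' coupling now involves shifting by one future step rather than one past step, which is handled by the same Cauchy--Schwarz split used there. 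Since $p = O(1)$, the ambient dimension $(pd)^2$ still satisfies $(pd)^2 = O(T^{2\alpha_d})$.

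Next I would apply Theorem \ref{theorem.MA} part 1 to the sequence $\boldsymbol\gamma^{(t)}$ (respectively $\boldsymbol\zeta^{(t)}$) with parameter $m/2$, dimension $(pd)^2$, and the constant vector $\mathbf{b} = (1/T,\ldots,1/T)^\top\in\mathbf{R}^{T-p}$, for which $|\mathbf{b}|_2 = O(T^{-1/2})$. This yields
\begin{align*}
\bigl\Vert\,\bigl|\tfrac{1}{T}\sum_{t=p}^{T-1}\boldsymbol\gamma^{(t)}\bigr|_\infty\,\bigr\Vert_{m/2}
\leq C\,((pd)^2)^{2/m}\,|\mathbf{b}|_2
= O\bigl(T^{4\alpha_d/m}\cdot T^{-1/2}\bigr)
= O\bigl(T^{\frac{4\alpha_d}{m}-\frac{1}{2}}\bigr),
\end{align*}
and the analogous bound for $\boldsymbol\zeta^{(t)}$. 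Translating back, the $|\cdot|_\infty$ of the vectorized sample average equals the $|\cdot|_\infty$ of the matrix $\frac{1}{T}\sum_{t}\mathbf{z}^{(t)}\mathbf{z}^{(t)\top}-\boldsymbol\Sigma^{(0)}$ (resp.\ its lag-1 version), so both inequalities of \eqref{eq.covariances} follow.

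The main obstacle I anticipate is purely bookkeeping rather than a new idea: verifying that Lemma \ref{lemma.operation} part 2 really goes through for the lag-1 cross product when the two factors involve different time indices, i.e.\ controlling $\Vert \mathbf{z}^{(t)}_i\mathbf{z}^{(t+1)}_j - \mathbf{z}^{(t,s)}_i\mathbf{z}^{(t+1,s)}_j\Vert_{m/2}$ by $C(\delta^{(s)} + \delta^{(s+1)})$. This is handled by the usual add-and-subtract $\mathbf{z}^{(t,s)}_i\mathbf{z}^{(t+1)}_j$ trick plus Cauchy--Schwarz, using that $\Vert\mathbf{z}^{(t)}_i\Vert_m = O(1)$ uniformly in $t$; the sum $\sum_{s\geq k}(\delta^{(s)}+\delta^{(s+1)})$ inherits the $(1+k)^{-\alpha}$ decay. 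Once this coupling bound is in hand, the rest of the argument is a direct invocation of Theorem \ref{theorem.MA} part 1 and no further delicate estimates are required.
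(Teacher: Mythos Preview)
Your proposal is correct and follows essentially the same route as the paper: vectorize the centered products, invoke Lemma \ref{lemma.operation} parts 1--2 to obtain $(m/2,\alpha)$-short range dependence for $\boldsymbol\gamma^{(t)}$ and $\boldsymbol\zeta^{(t)}$, and then apply Theorem \ref{theorem.MA} part 1 with the constant vector $\mathbf{b}$. The only cosmetic difference is that the paper explicitly splits off the deterministic correction $\tfrac{p}{T}|\boldsymbol\Sigma^{(k)}|_\infty = O(1/T)$ coming from the fact that $T-p$ summands are normalized by $1/T$, which your ``translating back'' sentence silently absorbs; this does not affect the stated rate.
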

\begin{proof}[proof of lemma \ref{lemma.covariance_matrix}]
For
\begin{align*}
\vert\boldsymbol\Sigma^{(0)}_{ij}\vert = \vert\mathbf{E}\mathbf{z}^{(p)}_i\mathbf{z}^{(p)}_j\vert\leq \Vert\mathbf{z}^{(p)}_i\Vert_m\times \Vert\mathbf{z}^{(p)}_j\Vert_m\leq C\\
\text{and }\vert\boldsymbol\Sigma^{(1)}_{ij}\vert= \vert\mathbf{E}\mathbf{z}^{(p)}_i\mathbf{z}^{(p + 1)}_j\vert\leq \Vert\mathbf{z}^{(p)}_i\Vert_m\times \Vert\mathbf{z}^{(p + 1)}_j\Vert_m\leq C.
\end{align*}

From theorem \ref{theorem.MA},
\begin{align*}
\Vert\ \vert\frac{1}{T}\sum_{t = p}^{T - 1}\mathbf{z}^{(t)}\mathbf{z}^{(t)\top} - \boldsymbol\Sigma^{(0)}\vert_\infty\ \Vert_{m/2}
\leq \Vert\ \vert\frac{1}{T}\sum_{t = p}^{T - 1}(\mathbf{z}^{(t)}\mathbf{z}^{(t)\top} - \boldsymbol\Sigma^{(0)})\vert_\infty\ \Vert_{m/2}
+ \frac{p}{T}\vert\boldsymbol\Sigma^{(0)}\vert_\infty\\
\leq \frac{Cd^{\frac{4}{m}}}{\sqrt{T}}\leq C_1\times T^{\frac{4\alpha_d}{m} - \frac{1}{2}}\\
\text{and } \Vert\ \vert\frac{1}{T}\sum_{t = p}^{T - 1}\mathbf{z}^{(t)}\mathbf{z}^{(t + 1)\top} - \boldsymbol\Sigma^{(1)}\vert_\infty\ \Vert_{m/2}
\leq \Vert\ \vert\frac{1}{T}\sum_{t = p}^{T - 1}(\mathbf{z}^{(t)}\mathbf{z}^{(t + 1)\top} - \boldsymbol\Sigma^{(1)})\vert_\infty\ \Vert_{m/2} + \frac{p}{T}\vert\boldsymbol\Sigma^{(1)}\vert_\infty\\
\leq C_2\times T^{\frac{4\alpha_d}{m } - \frac{1}{2}},
\end{align*}
and we prove lemma \ref{lemma.covariance_matrix}.
\end{proof}

\begin{proof}[proof of theorem \ref{theorem.Lasso_consistency}]
Define $\widehat{\boldsymbol\Sigma}^{(0)} = \frac{1}{T}\sum_{t = p}^{T - 1}\mathbf{z}^{(t)}\mathbf{z}^{(t)\top}$ and
$\widehat{\boldsymbol\Sigma}^{(1)} = \frac{1}{T}\sum_{t = p}^{T - 1}\mathbf{z}^{(t)}\mathbf{z}^{(t + 1)\top}$. By definition
\begin{align*}
\frac{1}{2T}\vert \mathbf{y}^{(l)} - \mathbf{W}\widetilde{\mathbf{S}}_{\cdot l}\vert_2^2 + \lambda \vert\widetilde{\mathbf{S}}_{\cdot l}\vert_1
\leq \frac{1}{2T}\vert\mathbf{y}^{(l)} - \mathbf{W}\mathbf{S}_{\cdot l}\vert_2^2 + \lambda\vert\mathbf{S}_{\cdot l}\vert_1\\
\Rightarrow \frac{1}{2}(\widetilde{\mathbf{S}}_{\cdot l} - \mathbf{S}_{\cdot l})^\top\widehat{\boldsymbol\Sigma}^{(0)}(\widetilde{\mathbf{S}}_{\cdot l} - \mathbf{S}_{\cdot l})
\leq \lambda\vert\mathbf{S}_{\cdot l}\vert_1 - \lambda\vert\widetilde{\mathbf{S}}_{\cdot l}\vert_1 + \frac{1}{T}\boldsymbol\eta^{(l)\top}\mathbf{W}
(\widetilde{\mathbf{S}}_{\cdot l} - \mathbf{S}_{\cdot l})\\
\leq \lambda\sum_{j\in\mathcal{B}_l}\vert\widetilde{\mathbf{S}}_{j l} - \mathbf{S}_{jl}\vert - \lambda\sum_{j\not\in\mathcal{B}_l}\vert\widetilde{\mathbf{S}}_{j l}\vert
+ \vert\frac{1}{T}\mathbf{W}^\top\boldsymbol\eta^{(l)}\vert_\infty\times
\vert\widetilde{\mathbf{S}}_{\cdot l} - \mathbf{S}_{\cdot l}\vert_1
\end{align*}
From the Yule-Walker equation (see \cite{MR1238940})
\begin{equation}
\begin{aligned}
\mathbf{z}^{(t)}_j\boldsymbol\epsilon^{(t+1)}_l = (\mathbf{z}^{(t)}_j\mathbf{z}_l^{(t + 1)} - \boldsymbol\Sigma^{(1)}_{jl})- \sum_{k =  1}^{pd}\mathbf{S}_{kl}(\mathbf{z}_k^{(t)}\mathbf{z}^{(t)}_j - \boldsymbol\Sigma^{(0)}_{kj}),
\end{aligned}
\label{eq.Yule_walker}
\end{equation}
also notice that $\sum_{k = 1}^{pd}\vert\mathbf{S}_{kl}\vert\leq C\times \max_{l = 1,\cdots, d}\vert\mathcal{B}_l\vert\leq C_0$ for all $l$.
Therefore, from lemma \ref{lemma.operation}, $\mathbf{z}^{(t)}\boldsymbol\epsilon^{(t + 1)\top}$ are also $(m/2, \alpha)-$short range dependent random variables.
For $(\mathbf{W}^\top\boldsymbol\eta^{(l)})_j = \sum_{t = p}^{T - 1}\mathbf{z}^{(t)}_j\boldsymbol\epsilon^{(t+1)}_l$, theorem \ref{theorem.MA} implies
\begin{align*}
\max_{l = 1,\cdots, d}\vert\frac{1}{T}\mathbf{W}^\top\boldsymbol\eta^{(l)}\vert_\infty = \max_{j = 1,\cdots, pd, l = 1,\cdots, d}\vert\frac{1}{T}\sum_{t = p}^{T - 1}\mathbf{z}^{(t)}_j\boldsymbol\epsilon^{(t+1)}_l\vert = O_p\left(T^{\frac{4\alpha_d}{m} - \frac{1}{2}}\right).
\end{align*}
From assumption 2, with probability tending to 1
\begin{align*}
0\leq \frac{3\lambda}{2}\sum_{j\in\mathcal{B}_l}\vert\widetilde{\mathbf{S}}_{j l} - \mathbf{S}_{jl}\vert - \frac{\lambda}{2}\sum_{j\not\in\mathcal{B}_l}\vert\widetilde{\mathbf{S}}_{j l}\vert\\
\Rightarrow \sum_{j\not\in\mathcal{B}_l}\vert\widetilde{\mathbf{S}}_{j l}\vert\leq 3\sum_{j\in\mathcal{B}_l}\vert\widetilde{\mathbf{S}}_{j l} - \mathbf{S}_{jl}\vert\ \text{for all }l = 1,\cdots, d.
\end{align*}
Therefore, with probability tending to $1$
\begin{equation}
\begin{aligned}
\vert
\widetilde{\mathbf{S}}_{\cdot l} - \mathbf{S}_{\cdot l}
\vert_1\leq 4\sum_{j\in\mathcal{B}_l}\vert\widetilde{\mathbf{S}}_{j l} - \mathbf{S}_{jl}\vert\leq 4\sqrt{\vert\mathcal{B}\vert_l}\times \vert\widetilde{\mathbf{S}}_{\cdot l} - \mathbf{S}_{\cdot l}\vert_2\leq C\vert \widetilde{\mathbf{S}}_{\cdot l} - \mathbf{S}_{\cdot l}\vert_2,
\end{aligned}
\label{eq.Lasso_L1}
\end{equation}
and from assumption 2, for sufficiently large $T$
\begin{align*}
(\widetilde{\mathbf{S}}_{\cdot l} - \mathbf{S}_{\cdot l})^\top\widehat{\boldsymbol\Sigma}^{(0)}(\widetilde{\mathbf{S}}_{\cdot l} - \mathbf{S}_{\cdot l})
\geq (\widetilde{\mathbf{S}}_{\cdot l} - \mathbf{S}_{\cdot l})^\top\boldsymbol\Sigma^{(0)}(\widetilde{\mathbf{S}}_{\cdot l} - \mathbf{S}_{\cdot l})
- \vert \widehat{\boldsymbol\Sigma}^{(0)} - \boldsymbol\Sigma^{(0)}\vert_\infty\times \vert\widetilde{\mathbf{S}}_{\cdot l} - \mathbf{S}_{\cdot l}\vert_1^2\\
\geq \frac{c}{2}\vert\widetilde{\mathbf{S}}_{\cdot l} - \mathbf{S}_{\cdot l}\vert_2^2.
\end{align*}
Correspondingly, we have
\begin{align*}
\frac{c}{2}\vert\widetilde{\mathbf{S}}_{\cdot l} - \mathbf{S}_{\cdot l}\vert_2^2\leq \frac{3\lambda}{2}\sum_{j\in\mathcal{B}_l}\vert\widetilde{\mathbf{S}}_{j l} - \mathbf{S}_{jl}\vert\leq C\lambda\vert\widetilde{\mathbf{S}}_{\cdot l} - \mathbf{S}_{\cdot l}\vert_2\Rightarrow \vert\widetilde{\mathbf{S}}_{\cdot l} - \mathbf{S}_{\cdot l}\vert_2\leq C\lambda
\end{align*}
for any $l$ with probability tending to $1$. From eq.\eqref{eq.Lasso_L1}, with probability tending to $1$
\begin{align*}
\vert\widetilde{\mathbf{S}}_{\cdot l} - \mathbf{S}_{\cdot l}\vert_1\leq C\vert\widetilde{\mathbf{S}}_{\cdot l} - \mathbf{S}_{\cdot l}\vert_2\leq C_1\lambda
\end{align*}
and we prove eq.\eqref{eq.Lasso_consistency}.

For
\begin{align*}
Prob\left(\cup_{l = 1,\cdots, d}(\widetilde{\mathcal{B}}_l \neq \mathcal{B}_l)\right)\leq
Prob\left(\exists l = 1,\cdots, d, j\in\mathcal{B}_l: \vert\widetilde{\mathbf{S}}_{jl}\vert\leq b_T\right)\\
+ Prob\left(\exists l = 1,\cdots, d, j\not\in\mathcal{B}_l: \vert\widetilde{\mathbf{S}}_{jl}\vert > b_T\right).
\end{align*}
Notice that
\begin{align*}
\vert\widetilde{\mathbf{S}}_{jl}\vert\geq \vert\mathbf{S}_{jl}\vert - \vert\widetilde{\mathbf{S}}_{jl} - \mathbf{S}_{jl}\vert,\\
\text{so } Prob\left(\exists l = 1,\cdots, d, j\in\mathcal{B}_l: \vert\widetilde{\mathbf{S}}_{jl}\vert\leq b_T\right)
\leq Prob\left(\max_{l = 1,\cdots, d}\vert\widetilde{\mathbf{S}}_{\cdot l} - \mathbf{S}_{\cdot l}\vert_\infty > b_T\right) = o(1).
\end{align*}
Similarly,
\begin{align*}
Prob\left(\exists l = 1,\cdots, d, j\not\in\mathcal{B}_l: \vert\widetilde{\mathbf{S}}_{jl}\vert > b_T\right)\leq
Prob\left(\max_{l = 1,\cdots, d}\vert\widetilde{\mathbf{S}}_{jl} - \mathbf{S}_{jl}\vert > b_T\right) = o(1),
\end{align*}
and we prove eq.\eqref{eq.model_selection}.
\end{proof}

Before proving the asymptotic distribution of the post-selection estimator $\widehat{\mathbf{S}}$, we derive the algebraic properties of the partial inverse operator in the following lemma.

\begin{lemma}
The partial inverse operator $\mathcal{F}_{\cdot}(\cdot)$ in definition \ref{definition.partial_inverse} has the following properties:

i. For a given matrix $\mathbf{A}\in\mathbf{R}^{m\times m}$ and a set $\mathcal{B} = \{1\leq k_1 < k_2 < \cdots < k_s\leq m\}$, define the submatrix $\mathbf{A}_\mathcal{B}$ as in the introduction. If $\mathbf{A}$ is symmetric and all of its eigenvalues are greater than a constant $c>0$, then
\begin{equation}
\mathcal{F}_{\mathcal{B}}(\mathbf{A})\ \text{is symmetric and } \vert\mathbf{F}_\mathcal{B}(\mathbf{A})\vert_{L_1}\leq \frac{\vert\mathcal{B}\vert}{c}.
\label{eq.first_beta}
\end{equation}

ii. Suppose $\mathbf{A}_\mathcal{B}$ is invertible. In addition suppose $\boldsymbol\beta\in\mathcal{R}^m$ satisfies $\boldsymbol\beta_i = 0$ if $i\not\in\mathcal{B}$. Then we have
\begin{equation}
\mathcal{F}_\mathcal{B}(\mathbf{A})\mathbf{A}\boldsymbol\beta = \boldsymbol\beta.
\label{eq.second_beta}
\end{equation}
\label{lemma.post-selection}
\end{lemma}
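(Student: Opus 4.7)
The plan is to unwind Definition \ref{definition.partial_inverse} directly: $\mathcal{F}_\mathcal{B}(\mathbf{A})$ is the zero-padded embedding, at positions indexed by $\mathcal{B}$, of the pseudo-inverse $\mathbf{Z}$ of the principal submatrix $\mathbf{A}_\mathcal{B}$. Both claims then reduce to careful bookkeeping between the ambient indexing $\{1,\ldots,m\}$ and the local indexing $\{1,\ldots,|\mathcal{B}|\}$ of the submatrix, combined with elementary linear algebra.

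For part i, the first step is that symmetry of $\mathbf{A}$ immediately transfers to its principal submatrix $\mathbf{A}_\mathcal{B}$. To bound the eigenvalues from below, I would observe that for any $\mathbf{x}\in\mathbf{R}^{|\mathcal{B}|}$, its zero-padded extension $\widetilde{\mathbf{x}}\in\mathbf{R}^m$ satisfies $\mathbf{x}^\top\mathbf{A}_\mathcal{B}\mathbf{x}=\widetilde{\mathbf{x}}^\top\mathbf{A}\widetilde{\mathbf{x}}\geq c|\widetilde{\mathbf{x}}|_2^2=c|\mathbf{x}|_2^2$, so $\lambda_{\min}(\mathbf{A}_\mathcal{B})\geq c>0$. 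Hence $\mathbf{A}_\mathcal{B}$ is invertible, $\mathbf{Z}=\mathbf{A}_\mathcal{B}^{-1}$ is symmetric, and $|\mathbf{Z}|_2\leq 1/c$. Symmetry of $\mathcal{F}_\mathcal{B}(\mathbf{A})$ then follows because eq.\eqref{eq.matrix_C} places $\mathbf{Z}_{st}$ at position $(k_s,k_t)$ and $\mathbf{Z}_{ts}=\mathbf{Z}_{st}$ at $(k_t,k_s)$. For the $L_1$-bound, rows of $\mathcal{F}_\mathcal{B}(\mathbf{A})$ indexed outside $\mathcal{B}$ vanish, while a row $k_s$ has the same $\ell_1$-mass as $\mathbf{Z}_{s\cdot}$, and Cauchy--Schwarz yields $|\mathbf{Z}_{s\cdot}|_1\leq\sqrt{|\mathcal{B}|}\,|\mathbf{Z}|_2\leq\sqrt{|\mathcal{B}|}/c\leq|\mathcal{B}|/c$.

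For part ii, I would verify the identity coordinate-wise. For $i\notin\mathcal{B}$ both sides vanish: the left because the $i$-th row of $\mathcal{F}_\mathcal{B}(\mathbf{A})$ is zero, the right by the sparsity assumption on $\boldsymbol\beta$. For $i=k_s\in\mathcal{B}$, the sparsity of $\boldsymbol\beta$ collapses the inner sum to $(\mathbf{A}\boldsymbol\beta)_{k_t}=\sum_{r=1}^{|\mathcal{B}|}\mathbf{A}_{k_tk_r}\boldsymbol\beta_{k_r}=(\mathbf{A}_\mathcal{B}\boldsymbol\beta_\mathcal{B})_t$, where $\boldsymbol\beta_\mathcal{B}=(\boldsymbol\beta_{k_1},\ldots,\boldsymbol\beta_{k_{|\mathcal{B}|}})^\top$. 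Plugging in, $(\mathcal{F}_\mathcal{B}(\mathbf{A})\mathbf{A}\boldsymbol\beta)_{k_s}=\sum_t\mathbf{Z}_{st}(\mathbf{A}_\mathcal{B}\boldsymbol\beta_\mathcal{B})_t=(\mathbf{Z}\mathbf{A}_\mathcal{B}\boldsymbol\beta_\mathcal{B})_s=\boldsymbol\beta_{k_s}$ by $\mathbf{Z}\mathbf{A}_\mathcal{B}=\mathbf{I}_{|\mathcal{B}|}$.

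The argument presents no real obstacle. The only mild subtlety is keeping straight the translation between the ambient indices $k_1<\cdots<k_{|\mathcal{B}|}$ and the local submatrix indices $1,\ldots,|\mathcal{B}|$; once that bookkeeping is set up, the proof is pure linear algebra and requires no probabilistic input or $(m,\alpha)$-short-range dependence machinery.
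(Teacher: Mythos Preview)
Your proof is correct and follows essentially the same approach as the paper's own argument: both verify part~i by noting that $\mathbf{A}_\mathcal{B}$ inherits symmetry and positive-definiteness (with $\lambda_{\min}\geq c$) from $\mathbf{A}$, whence $\mathbf{Z}=\mathbf{A}_\mathcal{B}^{-1}$ is symmetric with $|\mathbf{Z}|_2\leq 1/c$, and then pass from the spectral norm to the $L_1$-norm via the inequality $|\mathbf{Z}|_{L_1}\leq\sqrt{|\mathcal{B}|}\,|\mathbf{Z}|_2$; both handle part~ii by a coordinate-wise check that reduces to $\mathbf{Z}\mathbf{A}_\mathcal{B}=\mathbf{I}_{|\mathcal{B}|}$. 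Your write-up is in fact slightly more explicit than the paper's in justifying why the submatrix has eigenvalues bounded below by $c$ (via the zero-padding Rayleigh-quotient argument), which the paper simply asserts.
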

Therefore, even if the matrix $\mathbf{A}$ is not invertible, which is a common practice when the number of parameters to estimate exceeds the sample size in linear regression, the partial inverse matrix $\mathcal{F}_\mathcal{B}(\mathbf{A})$ still behaves like the inverse matrix when acting on a sparse vector $\boldsymbol\beta$. This property motivates the development of the post-selection estimator $\widehat{\mathbf{S}}$, which can be recognized as a generalization of the least square estimator to high-dimensional setting.
\begin{proof}[proof of lemma \ref{lemma.post-selection}]
i. For $\mathbf{A}$ is symmetric and positive definite, $\mathbf{A}_\mathcal{B}$ is symmetric and positive definite, and its inverse $\mathbf{A}_\mathcal{B}^{-1}$ is symmetric. Therefore, eq.\eqref{eq.matrix_C} implies that $\mathcal{F}_\mathcal{B}(\mathbf{A})$ is symmetric. Moreover, for
$\vert\mathbf{A}_\mathcal{B}^{-1}\vert_2\leq \frac{1}{c}$,
\begin{align*}
\vert\mathcal{F}_\mathcal{B}(\mathbf{A})\vert_{L_1} = \vert\mathbf{A}_\mathcal{B}^{-1}\vert_{L_1}\leq \frac{1}{c}\times \vert\mathcal{B}\vert,
\end{align*}
and we prove eq.\eqref{eq.first_beta}.

ii. For any $j = 1,\cdots, m$, if $j\not\in\mathcal{B}$, then $\mathcal{F}_\mathcal{B}(\mathbf{A})_{j\cdot} = 0$, so the $j$th element of $\mathcal{F}_\mathcal{B}(\mathbf{A})\mathbf{A}\boldsymbol\beta$ is $0$, which equals $\boldsymbol\beta_j$. On the other hand, if $j = k_v$ for some $v = 1,\cdots, s$, then
\begin{align*}
(\mathcal{F}_\mathcal{B}(\mathbf{A})\mathbf{A}\boldsymbol\beta)_j = \sum_{q = 1}^s \mathcal{F}_\mathcal{B}(\mathbf{A})_{jk_q}\sum_{r = 1}^s \mathbf{A}_{k_qk_r}\boldsymbol\beta_{k_r} = \sum_{r = 1}^s \boldsymbol\beta_{k_r}\sum_{q = 1}^s \mathcal{F}_\mathcal{B}(\mathbf{A})_{jk_q}\mathbf{A}_{k_qk_r} = \boldsymbol\beta_j,
\end{align*}
and we prove eq.\eqref{eq.second_beta}.
\end{proof}
A key difference between the partial inverse operator $\mathcal{F}_{\cdot}(\cdot)$ and the Moore -- Penrose pusedo--inverse is that $\mathcal{F}_{\cdot}(\cdot)$ requires the sparsity structure of $\boldsymbol\beta$, and is able to maintain the sparsity pattern of $\boldsymbol\beta$; while the Moore - Penrose pusedo-inverse normally cannot recover the sparsity structure when $\mathbf{A}$ is not invertible.

\begin{proof}[proof of theorem \ref{theorem.Gaussian_approx_}]
From theorem \ref{theorem.Lasso_consistency}, $Prob\left(\cap_{l = 1,\cdots, d}(\widetilde{\mathcal{B}}_l  = \mathcal{B}_l)\right)\to 1$ as the sample size $T\to\infty$.
For $\boldsymbol\Sigma^{(0)}$'s eigenvalues are assumed to be greater than a constant $c>0$, lemma \ref{lemma.post-selection} implies
$$
\vert\mathcal{F}_{\mathcal{B}_l}(\boldsymbol\Sigma^{(0)})\vert_{L_1}\leq \frac{1}{c}\times \max_{l = 1,\cdots, d}\vert\mathcal{B}_l\vert\leq C_1.
$$
Therefore, define the vector
\begin{align*}
\boldsymbol{\kappa}^{(t + 1)} =
\left(
\begin{matrix}
(\mathcal{F}_{\mathcal{B}_1}(\boldsymbol\Sigma^{(0)})\mathbf{z}^{(t)}\boldsymbol\epsilon_1^{(t + 1)})^\top &
(\mathcal{F}_{\mathcal{B}_2}(\boldsymbol\Sigma^{(0)})\mathbf{z}^{(t)}\boldsymbol\epsilon_2^{(t + 1)})^\top &
\cdots &
(\mathcal{F}_{\mathcal{B}_d}(\boldsymbol\Sigma^{(0)})\mathbf{z}^{(t)}\boldsymbol\epsilon_d^{(t + 1)})^\top
\end{matrix}
\right)^\top\in\mathbf{R}^{pd^2},
\end{align*}
here $\mathbf{z}^{(t)} = (\mathbf{x}^{(t)\top},\cdots, \mathbf{x}^{(t - p + 1)\top})^\top$ and $t\in\mathbf{Z}$. Then
\begin{align*}
\mathbf{E}\mathcal{F}_{\mathcal{B}_l}(\boldsymbol\Sigma^{(0)})\mathbf{z}^{(t)}\boldsymbol\epsilon_l^{(t + 1)} = \mathcal{F}_{\mathcal{B}_l}(\boldsymbol\Sigma^{(0)})\mathbf{E}\mathbf{z}^{(t)}\boldsymbol\epsilon_l^{(t + 1)} = 0.
\end{align*}
Besides, for any $j = 1,\cdots, pd$,
\begin{align*}
\Vert\ (\mathcal{F}_{\mathcal{B}_l}(\boldsymbol\Sigma^{(0)})\mathbf{z}^{(t)}\boldsymbol\epsilon_l^{(t + 1)})_j\ \Vert_{m/2}
= \Vert\sum_{q = 1}^{pd}(\mathcal{F}_{\mathcal{B}_l}(\boldsymbol\Sigma^{(0)}))_{jq}\mathbf{z}^{(t)}_q\boldsymbol\epsilon_l^{(t + 1)}\Vert_{m / 2}\\
\leq \vert\mathcal{F}_{\mathcal{B}_l}(\boldsymbol\Sigma^{(0)})\vert_{L_1}\times \max_{q = 1,\cdots, pd}\Vert\mathbf{z}^{(t)}_q\Vert_m\times \Vert\epsilon_l^{(t + 1)}\Vert_m\leq C_0
\end{align*}
for a constant $C_0$.
Moreover,
\begin{align*}
\Vert
\sum_{q = 1}^{pd}(\mathcal{F}_{\mathcal{B}_l}(\boldsymbol\Sigma^{(0)}))_{jq}\mathbf{z}^{(t)}_q\boldsymbol\epsilon_l^{(t + 1)}
- \sum_{q = 1}^{pd}(\mathcal{F}_{\mathcal{B}_l}(\boldsymbol\Sigma^{(0)}))_{jq}\mathbf{z}^{(t, j - 1)}_q\boldsymbol\epsilon_l^{(t + 1, j)}
\Vert_{m/2}\\
\leq \vert\mathcal{F}_{\mathcal{B}_l}(\boldsymbol\Sigma^{(0)})\vert_{L_1}\times \left(\max_{q = 1,\cdots, pd}\Vert \mathbf{z}_q^{(t)}\Vert_m\times \Vert\epsilon_l^{(t + 1)} - \epsilon_l^{(t + 1, j)}\Vert_{m} + \Vert\boldsymbol\epsilon_l^{(t+1, j)}\Vert_m\times \max_{q = 1,\cdots, pd}\Vert \mathbf{z}_q^{(t)} -  \mathbf{z}_q^{(t, j - 1)}\Vert_m\right).
\end{align*}
Therefore, by definition $\boldsymbol\kappa^{(t)}$ are $(\frac{m}{2},\alpha)-$short range dependent random variables. For
\begin{align*}
\max_{l = 1,\cdots, d, j = 1,\cdots, pd}\frac{1}{\sqrt{T}}\vert\sum_{t = p}^{T - 1}\boldsymbol\kappa_{(l - 1)\times pd + j}^{(t + 1)}\vert = \max_{l = 1,\cdots, d, j\in\mathcal{B}_l}\frac{1}{\sqrt{T}}\vert\sum_{t = p}^{T - 1}\boldsymbol\kappa_{(l - 1)\times pd + j}^{(t + 1)}\vert,
\end{align*}
from assumption 3 and according to theorem \ref{theorem.MA},
\begin{align*}
\sup_{x\in\mathbf{R}}\vert
Prob\left(\max_{l = 1,\cdots, d, j = 1,\cdots, pd}\frac{1}{\sqrt{T - p}}\vert\sum_{t = p}^{T - 1}\boldsymbol\kappa_{(l - 1)\times pd + j}^{(t + 1)}\vert\leq x\right)\\
 - Prob\left(\max_{l = 1,\cdots, d, j\in\mathcal{B}_l}\vert\boldsymbol\Gamma_{jl}^\circ\vert\leq x\right)
\vert = o(1),
\end{align*}
here $\boldsymbol\Gamma_{jl}^\circ, l = 1,2,\cdots, d, j\in\mathcal{B}_l$ are joint normal random variables with mean $0$ and
$$
Cov(\boldsymbol\Gamma_{j_1l_1}^\circ, \boldsymbol\Gamma_{j_2l_2}^\circ) = \frac{T}{T - p}\boldsymbol\Lambda^{(l_1, l_2)}_{j_1 j_2} = \boldsymbol\Lambda^{(l_1, l_2)}_{j_1 j_2} + O(1 / T),
$$
and $\boldsymbol\Lambda^{(l_1, l_2)}_{j_1 j_2}$ is defined in eq.\eqref{eq.cov_matrixX}. For any $x$,
\begin{align*}
Prob\left(\max_{l = 1,\cdots, d, j = 1,\cdots, pd}\frac{1}{\sqrt{T}}\vert\sum_{t = p}^{T - 1}\boldsymbol\kappa_{(l - 1)\times pd + j}^{(t + 1)}\vert\leq x\right)\\
= Prob\left(\max_{l = 1,\cdots, d, j = 1,\cdots, pd}\frac{1}{\sqrt{T - p}}\vert\sum_{t = p}^{T - 1}\boldsymbol\kappa_{(l - 1)\times pd + j}^{(t + 1)}\vert\leq x + \frac{\sqrt{T} - \sqrt{T - p}}{\sqrt{T - p}}x\right).
\end{align*}
From lemma B.1. in \cite{zhangPolitis}, we prove
\begin{equation}
\begin{aligned}
\sup_{x\in\mathbf{R}}\vert
Prob\left(\max_{l = 1,\cdots, d, j = 1,\cdots, pd}\frac{1}{\sqrt{T}}\vert\sum_{t = p}^{T - 1}\boldsymbol\kappa_{(l - 1)\times pd + j}^{(t + 1)}\vert\leq x\right)\\
 - Prob\left(\max_{l = 1,\cdots, d, j\in\mathcal{B}_l}\vert\boldsymbol\Gamma_{jl}\vert\leq x\right)
\vert = o(1).
\end{aligned}
\end{equation}
See eq.\eqref{eq.prob_result} for the definition of $\boldsymbol\Gamma_{jl}$.

On the other hand, form lemma \ref{lemma.covariance_matrix}, for any vector $\mathbf{c}\in\mathbf{R}^{\vert\mathcal{B}_l\vert}$,
\begin{align*}
\mathbf{c}^\top\widehat{\boldsymbol\Sigma}^{(0)}_{\mathcal{B}_l}\mathbf{c}\geq
\mathbf{c}^\top\boldsymbol\Sigma^{(0)}_{\mathcal{B}_l}\mathbf{c} - \vert\mathbf{c}^\top(\boldsymbol\Sigma^{(0)}_{\mathcal{B}_l} - \widehat{\boldsymbol\Sigma}_{\mathcal{B}_l}^{(0)})\mathbf{c}\vert
\geq c\vert\mathbf{c}\vert_2^2 - \vert\boldsymbol\Sigma^{(0)} - \widehat{\boldsymbol\Sigma}^{(0)}\vert_\infty\times \vert\mathbf{c}\vert_1^2\\
\geq c\vert\mathbf{c}\vert_2^2 - \vert\boldsymbol\Sigma^{(0)} - \widehat{\boldsymbol\Sigma}^{(0)}\vert_\infty\times \max_{l = 1,\cdots, d}\vert\mathcal{B}_l\vert\times \vert\mathbf{c}\vert_2^2
\geq \frac{c}{2}\vert\mathbf{c}\vert_2^2\\
\text{and } \vert\boldsymbol\Sigma^{(0)} - \widehat{\boldsymbol\Sigma}^{(0)}\vert_2 \leq
\vert\boldsymbol\Sigma^{(0)} - \widehat{\boldsymbol\Sigma}^{(0)}\vert_\infty\times \max_{l  = 1,\cdots,d}\vert\mathcal{B}_l\vert
\end{align*}
for sufficiently large $T$ with probability tending to $1$. Therefore, all $\widehat{\boldsymbol\Sigma}^{(0)}_{\mathcal{B}_l}$ are positive definite, and from eq.(5.8.2) in \cite{MR2978290},
\begin{equation}
\begin{aligned}
\vert\mathcal{F}_{\mathcal{B}_l}(\widehat{\boldsymbol\Sigma}^{(0)}) - \mathcal{F}_{\mathcal{B}_l}(\boldsymbol\Sigma^{(0)})\vert_{L_1}
= \vert(\widehat{\boldsymbol\Sigma}^{(0)}_{\mathcal{B}_l})^{-1} - (\boldsymbol\Sigma^{(0)}_{\mathcal{B}_l})^{-1}\vert_{L_1}\leq C\vert(\widehat{\boldsymbol\Sigma}^{(0)}_{\mathcal{B}_l})^{-1} - (\boldsymbol\Sigma^{(0)}_{\mathcal{B}_l})^{-1}\vert_2\\
\leq C_1\times \vert\boldsymbol\Sigma^{(0)} - \widehat{\boldsymbol\Sigma}^{(0)}\vert_\infty\ \text{for sufficiently large } T\\
\Rightarrow \max_{l = 1,\cdots, d}\vert\mathcal{F}_{\mathcal{B}_l}(\widehat{\boldsymbol\Sigma}^{(0)}) - \mathcal{F}_{\mathcal{B}_l}(\boldsymbol\Sigma^{(0)})\vert_{L_1}
= O_p\left(T^{\frac{4\alpha_d}{m} - \frac{1}{2}}\right).
\label{eq.delta_Sigma}
\end{aligned}
\end{equation}
Therefore, if $\widetilde{\mathcal{B}}_l = \mathcal{B}_l$ for $l  =1,\cdots, d$, then
\begin{equation}
\begin{aligned}
\max_{j = 1,\cdots, pd, l = 1,\cdots, d}
\vert
\sqrt{T}(\widehat{\mathbf{S}}_{j l} - \mathbf{S}_{j l}) - \frac{1}{\sqrt{T}}\sum_{t = p}^{T - 1}\boldsymbol\kappa_{(l - 1)\times pd + j}^{(t + 1)}\vert\\
\leq \frac{1}{\sqrt{c}}\max_{l = 1,\cdots, d}\vert(\mathcal{F}_{\mathcal{B}_l}(\widehat{\boldsymbol\Sigma}^{(0)}) - \mathcal{F}_{\mathcal{B}_l}(\boldsymbol\Sigma^{(0)}))\frac{1}{\sqrt{T}}\sum_{t = p}^{T - 1}\mathbf{z}^{(t)}\boldsymbol\epsilon^{(t + 1)}_l\vert_\infty\\
\leq \frac{1}{\sqrt{c}}\max_{l = 1,\cdots, d}\vert\mathcal{F}_{\mathcal{B}_l}(\widehat{\boldsymbol\Sigma}^{(0)}) - \mathcal{F}_{\mathcal{B}_l}(\boldsymbol\Sigma^{(0)})\vert_{L_1}
\times \max_{l = 1,\cdots, d}\vert\frac{1}{\sqrt{T}}\sum_{t = p}^{T - 1}\mathbf{z}^{(t)}\boldsymbol\epsilon^{(t + 1)}_l\vert_\infty
\end{aligned}
\label{eq.diff}
\end{equation}
From eq.\eqref{eq.Yule_walker}, $\mathbf{z}^{(t)}\boldsymbol\epsilon^{(t + 1)\top}$ are $(m/2,\alpha)-$short range dependent random variables. Therefore, theorem \ref{theorem.MA} implies
\begin{align*}
\max_{l = 1,\cdots, d}\vert\frac{1}{\sqrt{T}}\sum_{t = p}^{T - 1}\mathbf{z}^{(t)}\boldsymbol\epsilon^{(t + 1)}_l\vert_\infty = O_p\left(T^{\frac{4\alpha_d}{m}}\right),
\end{align*}
and from eq.\eqref{eq.diff}, we have
\begin{equation}
\begin{aligned}
\max_{j = 1,\cdots, pd, l = 1,\cdots, d}
\vert
\sqrt{T}(\widehat{\mathbf{S}}_{j l} - \mathbf{S}_{j l}) - \frac{1}{\sqrt{T}}\sum_{t = p}^{T - 1}\boldsymbol\kappa_{(l - 1)\times pd + j}^{(t + 1)}\vert
= O_p\left(T^{\frac{8\alpha_d}{m} - \frac{1}{2}}\right).
\end{aligned}
\end{equation}
From lemma B.1 in \cite{zhangPolitis}, for any $\omega>0$ and sufficiently large $T$,
\begin{align*}
Prob\left(
\max_{j = 1,\cdots, pd, l = 1,\cdots, d}\sqrt{T}\vert\widehat{\mathbf{S}}_{jl} - \mathbf{S}_{jl}\vert\leq x
\right)\leq \omega\\
+ Prob\left(\max_{j = 1,\cdots, pd, l = 1,\cdots, d}\vert\frac{1}{\sqrt{T}}\sum_{t = p}^{T - 1}\boldsymbol\kappa_{(l - 1)\times pd + j}^{(t + 1)}\vert\leq x + C_\omega T^{\frac{8\alpha_d}{m} - \frac{1}{2}}\right)\\
\leq Prob\left(\max_{l = 1,\cdots, d, j\in\mathcal{B}_l}\vert\boldsymbol\Gamma_{jl}\vert\leq x\right) + \omega + C_1\times  T^{\frac{8\alpha_d}{m} - \frac{1}{2}}\times\log(T)\\
\text{and } Prob\left(\max_{j = 1,\cdots, pd, l = 1,\cdots, d}\sqrt{T}\vert\widehat{\mathbf{S}}_{jl} - \mathbf{S}_{jl}\vert\leq x
\right)\geq -\omega\\
+ Prob\left(\max_{j = 1,\cdots, pd, l = 1,\cdots, d}\vert\frac{1}{\sqrt{T}}\sum_{t = p}^{T - 1}\boldsymbol\kappa_{(l - 1)\times pd + j}^{(t + 1)}\vert\leq x - C_\omega T^{\frac{8\alpha_d}{m} - \frac{1}{2}}\right)\\
\geq Prob\left(\max_{l = 1,\cdots, d, j\in\mathcal{B}_l}\vert\boldsymbol\Gamma_{jl}\vert\leq x\right) - \omega - C_1\times  T^{\frac{8\alpha_d}{m} - \frac{1}{2}}\times\log(T),
\end{align*}
which implies eq.\eqref{eq.prob_result}.
\end{proof}

\begin{proof}[proof of theorem \ref{theorem.bootstrap}]
First from theorem \ref{theorem.Lasso_consistency}, we have $\widetilde{\mathcal{B}}_l = \mathcal{B}_l$ for $l = 1,\cdots, d$ with probability tending to $1$. If $\widetilde{\mathcal{B}}_l = \mathcal{B}_l$, then
\begin{align*}
\boldsymbol\Delta^*_{jl} = \sqrt{T}(\widehat{\mathbf{S}}^*_{j l} - \widehat{\mathbf{S}}_{j l})
= \frac{1}{\sqrt{T}}\sum_{t = p}^{T - 1}\sum_{q = 1}^{pd}\mathcal{F}_{\mathcal{B}_l}(\widehat{\boldsymbol\Sigma}^{(0)})_{jq}\widehat{\boldsymbol\Theta}^{(t)}_{q l}e^{(t)},
\end{align*}
so $\boldsymbol\Delta^*_{jl} = 0$ if $j\not\in\mathcal{B}_l$. If $j_1\in\mathcal{B}_{l_1}\ \text{and } j_2\in\mathcal{B}_{l_2}$, then for sufficiently large $T$ we have
\begin{align*}
\mathbf{E}^*\boldsymbol\Delta^*_{j_1l_1}\boldsymbol\Delta^*_{j_2l_2} = \frac{1}{T}
\sum_{t_1 = p}^{T - 1}\sum_{t_2 = p}^{T - 1}\sum_{q_1 = 1}^{pd}\sum_{q_2 = 1}^{pd}K\left(\frac{t_1 - t_2}{k_T}\right)\mathcal{F}_{\mathcal{B}_{l_1}}(\widehat{\boldsymbol\Sigma}^{(0)})_{j_1q_1}\\
\times\widehat{\boldsymbol\Theta}^{(t_1)}_{q_1 l_1}
\mathcal{F}_{\mathcal{B}_{l_2}}(\widehat{\boldsymbol\Sigma}^{(0)})_{j_2q_2}\widehat{\boldsymbol\Theta}^{(t_2)}_{q_2 l_2}.
\end{align*}
Notice that for $l = 1,\cdots, d$,
\begin{align*}
\widehat{\boldsymbol\Theta}^{(t)}_{ql}
= \mathbf{z}^{(t)}_q\boldsymbol\epsilon^{(t + 1)}_l - \sum_{s = 1}^{pd}\mathbf{z}^{(t)}_q\mathbf{z}^{(t)}_s(\widehat{\mathbf{S}}_{sl} - \mathbf{S}_{sl}),
\end{align*}
and define the terms
\begin{align*}
\boldsymbol\omega^{(t,l)}_{js} = \left(\sum_{q = 1}^{pd}\mathcal{F}_{\mathcal{B}_l}(\boldsymbol\Sigma^{(0)})_{jq}\mathbf{z}_q^{(t)}\right)\times \mathbf{z}_s^{(t)},\\
\text{then }\sum_{q = 1}^{pd}\mathcal{F}_{\mathcal{B}_l}(\boldsymbol\Sigma^{(0)})_{jq} \sum_{s = 1}^{pd}\mathbf{z}^{(t)}_q\mathbf{z}^{(t)}_s(\widehat{\mathbf{S}}_{sl} - \mathbf{S}_{sl}) = \sum_{s = 1}^{pd}\boldsymbol\omega^{(t,l)}_{js}(\widehat{\mathbf{S}}_{sl} - \mathbf{S}_{sl}).
\end{align*}
Moreover, we have
\begin{align*}
\Vert
\boldsymbol\omega_{js}^{(t,l)}
\Vert_{m/2}\leq \Vert\mathbf{z}_s^{(t)}\Vert_m\times \left(\sum_{q = 1}^{pd}\vert\mathcal{F}_{\mathcal{B}_l}(\boldsymbol\Sigma^{(0)})_{jq}\vert\times \Vert\mathbf{z}^{(t)}_q\Vert_m\right)\leq C
\end{align*}
for a fixed constant $C$. Define the $R^{pd^2}$ vector
\begin{align*}
\boldsymbol\psi^{(t + 1)}=
\left[
\begin{matrix}
\boldsymbol\epsilon^{(t+1)}_1\mathcal{F}_{\mathcal{B}_1}(\boldsymbol\Sigma^{(0)})\mathbf{z}^{(t)}\\
\boldsymbol\epsilon^{(t+1)}_2\mathcal{F}_{\mathcal{B}_2}(\boldsymbol\Sigma^{(0)})\mathbf{z}^{(t)}\\
\vdots\\
\boldsymbol\epsilon^{(t + 1)}_d\mathbf{F}_{\mathcal{B}_d}(\boldsymbol\Sigma^{(0)})\mathbf{z}^{(t)}
\end{matrix}
\right],\ \text{then }\mathbf{E} \boldsymbol\psi^{(t + 1)}  = 0.
\end{align*}
Besides, $\boldsymbol\psi^{(t + 1)}$ is a function of $\cdots, e_{t}, e_{t + 1}$ (see section \ref{section.m_alpha_dependent}). Denote the function
$\boldsymbol\psi^{(t + 1)}_i = Q^{(t + 1)}_i(\cdots, e_t, e_{t + 1})$.
For each $v = 0,1,\cdots,$ define
$$
\boldsymbol\psi^{(t + 1, v)}_i = Q^{(t + 1)}_i(\cdots, e_{t + 1 - v - 1}, e_{t + 1 - v}^\dagger, e_{t + 1 - v + 1},\cdots, e_{t + 1}).
$$
Then we have for each $l = 1,\cdots, d, j = 1,\cdots, pd$,
\begin{align*}
\Vert
\sum_{q = 1}^{pd}\mathcal{F}_{\mathcal{B}_l}(\boldsymbol\Sigma^{(0)})_{jq}\boldsymbol\epsilon^{(t+1)}_l\mathbf{z}^{(t)}_q
\Vert_{m/2}\leq \sum_{q = 1}^{pd}\vert\mathcal{F}_{\mathcal{B}_l}(\boldsymbol\Sigma^{(0)})_{jq}\vert\times\Vert\boldsymbol\epsilon^{(t+1)}_l\Vert_{m}\Vert\mathbf{z}^{(t)}_q\Vert_m\\
\leq C\max_{l = 1,\cdots, d}\vert\mathcal{F}_{\mathcal{B}_l}(\boldsymbol\Sigma^{(0)})\vert_{L_1}\leq C_1
\end{align*}
for a constant $C_1$. Moreover,
\begin{align*}
\Vert\boldsymbol\psi^{(t + 1)}_{(l - 1)pd + j} - \boldsymbol\psi^{(t + 1, v)}_{(l - 1)pd + j}\Vert_{m/2}
=\Vert
\sum_{q = 1}^{pd}\mathcal{F}_{\mathcal{B}_l}(\boldsymbol\Sigma^{(0)})_{jq}(\boldsymbol\epsilon^{(t+1)}_l\mathbf{z}^{(t)}_q - \boldsymbol\epsilon^{(t+1, v)}_l\mathbf{z}^{(t, v - 1)}_q)
\Vert_{m/2}\\
\leq C\sup_{q = 1,\cdots, pd, t\in\mathbf{Z}}\Vert\mathbf{z}_q^{(t)} - \mathbf{z}_q^{(t, v - 1)}\Vert_m + C\sup_{l = 1,\cdots, d, t\in\mathbf{Z}}
\Vert\boldsymbol\epsilon^{(t+1)}_l - \boldsymbol\epsilon^{(t+1, v)}_l\Vert_m.
\end{align*}
For $\boldsymbol\epsilon^{(t+1)}$, $\mathbf{z}^{(t)}$ are all $(m,\alpha)-$short range dependent random variables, these two formulas imply that
$\boldsymbol\psi^{(t + 1)}$ are $(m/2,\alpha)-$short range dependent random variables. Notice that
\begin{align*}
Cov\left(\sum_{t = p}^{T - 1}\boldsymbol\psi^{(t + 1)}_{(l_1 - 1)\times pd + j_1}, \sum_{t = p}^{T - 1}\boldsymbol\psi^{(t + 1)}_{(l_2 - 1)\times pd + j_2}\right)\\
= \sum_{t_1 = p}^{T - 1}\sum_{t_2 = p}^{T - 1}\sum_{q_1 = 1}^{pd}\sum_{q_2 = 1}^{pd}
\mathcal{F}_{\mathcal{B}_{l_1}}(\boldsymbol\Sigma^{(0)})_{j_1q_1}\mathcal{F}_{\mathcal{B}_{l_2}}(\boldsymbol\Sigma^{(0)})_{j_2q_2}\mathbf{E}\boldsymbol\epsilon_{l_1}^{(t_1 + 1)}\boldsymbol\epsilon_{l_2}^{(t_2 + 1)}
\mathbf{z}^{(t_1)}_{q_1}\mathbf{z}^{(t_2)}_{q_2} = T\boldsymbol\Lambda^{(l_1,l_2)}_{j_1j_2},
\end{align*}
see eq.\eqref{eq.cov_matrixX} for the definition of $\boldsymbol\Lambda^{(l_1, l_2)}$. Moreover, $\boldsymbol\psi^{(t)}_{(l - 1)\times pd + j} = 0$ if $j\not\in\mathcal{B}_l$, so
$\boldsymbol\psi^{(t)}$ only has $\sum_{l = 1}^d \vert\mathcal{B}_l\vert = O(d)$ non--zero elements, and
\begin{align*}
\max_{l_1,l_2 = 1,\cdots, d, j_1,j_2 = 1,\cdots, pd}\vert\frac{1}{T - p}\sum_{t_1 = p}^{T - 1}\sum_{t_2 = p}^{T - 1}\boldsymbol\psi^{(t_1 + 1)}_{(l_1 - 1)\times pd + j_1}\boldsymbol\psi^{(t_2 + 1)}_{(l_2 - 1)\times pd + j_2}K\left(\frac{t_1 - t_2}{k_T}\right)
- \frac{T}{T - p}\boldsymbol\Lambda^{(l_1, l_2)}_{j_1j_2}
\vert\\
= \max_{l_1,l_2 = 1,\cdots, j_1\in\mathcal{B}_{l_1}, j_2\in\mathcal{B}_{l_2}}\vert\frac{1}{T - p}\sum_{t_1 = p}^{T - 1}\sum_{t_2 = p}^{T - 1}\boldsymbol\psi^{(t_1 + 1)}_{(l_1 - 1)\times pd + j_1}\boldsymbol\psi^{(t_2 + 1)}_{(l_2 - 1)\times pd + j_2}K\left(\frac{t_1 - t_2}{k_T}\right)
- \frac{T}{T - p}\boldsymbol\Lambda^{(l_1, l_2)}_{j_1j_2}
\vert\\
= O_p\left(v_T + k_T\times T^{\frac{8\alpha_d}{m} - \frac{1}{2}}\right).
\end{align*}
Therefore,
\begin{equation}
\begin{aligned}
\max_{l_1,l_2 = 1,\cdots, d, j_1,j_2 = 1,\cdots, pd}\vert\frac{1}{T}\sum_{t_1 = p}^{T - 1}\sum_{t_2 = p}^{T - 1}\boldsymbol\psi^{(t_1 + 1)}_{(l_1 - 1)\times pd + j_1}\boldsymbol\psi^{(t_2 + 1)}_{(l_2 - 1)\times pd + j_2}K\left(\frac{t_1 - t_2}{k_T}\right)
- \boldsymbol\Lambda^{(l_1, l_2)}_{j_1j_2}
\vert\\
\leq \frac{p}{T}\max_{l_1, l_2 = 1,\cdots, d, j_1\in\mathcal{B}_{l_1}, j_2\in\mathcal{B}_{l_2}}\vert
\frac{1}{T - p}\sum_{t_1 = p}^{T - 1}\sum_{t_2 = p}^{T - 1}\boldsymbol\psi^{(t_1 + 1)}_{(l_1 - 1)\times pd + j_1}\boldsymbol\psi^{(t_2 + 1)}_{(l_2 - 1)\times pd+j_2}K\left(\frac{t_1 - t_2}{k_T}\right)
\vert\\
+ \max_{l_1,l_2 = 1,\cdots, d, j_1\in\mathcal{B}_{l_1}, j_2\in\mathcal{B}_{l_2}}\vert\frac{1}{T - p}\sum_{t_1 = p}^{T - 1}\sum_{t_2 = p}^{T - 1}\boldsymbol\psi^{(t_1 + 1)}_{(l_1 - 1)\times pd + j_1}\boldsymbol\psi^{(t_2 + 1)}_{(l_2 - 1)\times pd + j_2}K\left(\frac{t_1 - t_2}{k_T}\right)
- \frac{T}{T - p}\boldsymbol\Lambda^{(l_1, l_2)}_{j_1j_2}
\vert\\
+ \frac{p}{T - p}\max_{l_1,l_2 = 1,\cdots, d, j_1\in\mathcal{B}_{l_1}, j_2\in\mathcal{B}_{l_2}}\vert\boldsymbol\Lambda^{(l_1, l_2)}_{j_1j_2}\vert = O_p\left(v_T + k_T\times T^{\frac{8\alpha_d}{m} - \frac{1}{2}}\right).
\end{aligned}
\end{equation}
Since
\begin{align*}
\sum_{q = 1}^{pd}\mathcal{F}_{\mathcal{B}_l}(\boldsymbol\Sigma^{(0)})_{jq}\widehat{\boldsymbol\Theta}_{ql}^{(t)}
= \sum_{q = 1}^{pd}\mathcal{F}_{\mathcal{B}_l}(\boldsymbol\Sigma^{(0)})_{jq}\mathbf{z}_q^{(t)}\boldsymbol\epsilon_l^{(t + 1)}
- \sum_{s\in\mathcal{B}_l}\boldsymbol\omega_{js}^{(t,l)}(\widehat{\mathbf{S}}_{sl} - \mathbf{S}_{sl})\\
= \boldsymbol\psi^{(t + 1)}_{(l - 1)\times pd + j} - \sum_{s\in\mathcal{B}_l}\boldsymbol\omega_{js}^{(t,l)}(\widehat{\mathbf{S}}_{sl} - \mathbf{S}_{sl}),
\end{align*}
we have
\begin{align*}
\vert\frac{1}{T}\sum_{t_1 = p}^{T - 1}\sum_{t_2 = p}^{T - 1}K\left(\frac{t_1 - t_2}{k_T}\right)\times
\left(
\sum_{q = 1}^{pd}\mathcal{F}_{\mathcal{B}_{l_1}}(\boldsymbol\Sigma^{(0)})_{j_1q}\widehat{\boldsymbol\Theta}_{ql_1}^{(t)}
\right)
\times \left(
\sum_{q = 1}^{pd}\mathcal{F}_{\mathcal{B}_{l_2}}(\boldsymbol\Sigma^{(0)})_{j_2q}\widehat{\boldsymbol\Theta}_{ql_2}^{(t)}
\right)\\
- \frac{1}{T}\sum_{t_1 = p}^{T - 1}\sum_{t_2 = p}^{T - 1}K\left(\frac{t_1 - t_2}{k_T}\right)\boldsymbol\psi_{(l_1 - 1)\times pd +j_1}^{(t_1 + 1)}\boldsymbol\psi_{(l_2 - 1)\times pd + j_2}^{(t_2 + 1)}\vert\\
\leq
\vert\sum_{s\in\mathcal{B}_{l_1}}(\widehat{\mathbf{S}}_{sl_1} - \mathbf{S}_{sl_1})\frac{1}{T}\sum_{t_1 = p}^{T - 1}\sum_{t_2 = p}^{T - 1} K\left(\frac{t_1 - t_2}{k_T}\right)\boldsymbol\psi_{(l_2 - 1)\times pd + j_2}^{(t_2 + 1)}\boldsymbol\omega_{j_1s}^{(t_1, l_1)}\vert\\
+ \vert\sum_{s\in\mathcal{B}_{l_2}}(\widehat{\mathbf{S}}_{sl_2} - \mathbf{S}_{sl_2})\frac{1}{T}\sum_{t_1 = p}^{T - 1}\sum_{t_2 = p}^{T - 1}K\left(\frac{t_1 - t_2}{k_T}\right)\boldsymbol\psi_{(l_1 - 1)\times pd + j_1}^{(t_1 + 1)}\boldsymbol\omega_{j_2s}^{(t_2, l_2)}\vert\\
+ \vert
\sum_{s_1\in\mathcal{B}_{l_1}}\sum_{s_2\in\mathcal{B}_{l_2}}(\widehat{\mathbf{S}}_{s_1l_1} - \mathbf{S}_{s_1l_1})(\widehat{\mathbf{S}}_{s_2l_2} - \mathbf{S}_{s_2l_2})
\times \frac{1}{T}\sum_{t_1 = p}^{T - 1}\sum_{t_2 = p}^{T - 1}K\left(\frac{t_1 - t_2}{k_T}\right)\boldsymbol\omega_{j_1s_1}^{(t_1, l_1)}\boldsymbol\omega_{j_2s_2}^{(t_2, l_2)}
\vert\\
\leq \vert\mathcal{B}_{l_1}\vert\times \max_{s\in\mathcal{B}_{l_1}}\vert\widehat{\mathbf{S}}_{sl_1} - \mathbf{S}_{sl_1}\vert\times \max_{s\in\mathcal{B}_{l_1}}\vert\frac{1}{T}\sum_{t_1 = p}^{T - 1}\sum_{t_2 = p}^{T - 1} K\left(\frac{t_1 - t_2}{k_T}\right)\boldsymbol\psi_{(l_2 - 1)\times pd + j_2}^{(t_2 + 1)}\boldsymbol\omega_{j_1s}^{(t_1, l_1)}\vert\\
+ \vert\mathcal{B}_{l_2}\vert\times \max_{s\in\mathcal{B}_{l_2}}\vert\widehat{\mathbf{S}}_{sl_2} - \mathbf{S}_{sl_2}\vert\times \max_{s\in\mathcal{B}_{l_2}}\vert\frac{1}{T}\sum_{t_1 = p}^{T - 1}\sum_{t_2 = p}^{T - 1}K\left(\frac{t_1 - t_2}{k_T}\right)\boldsymbol\psi_{(l_1 - 1)\times pd + j_1}^{(t_1 + 1)}\boldsymbol\omega_{j_2s}^{(t_2, l_2)}
\vert\\
+ \vert\mathcal{B}_{l_1}\vert\times\vert\mathcal{B}_{l_2}\vert\times \max_{s_1\in\mathcal{B}_{l_1}}\vert\widehat{\mathbf{S}}_{s_1l_1} - \mathbf{S}_{s_1l_1}\vert
\times \max_{s_2\in\mathcal{B}_{l_2}}\vert\widehat{\mathbf{S}}_{s_2l_2} - \mathbf{S}_{s_2l_2}\vert\\
\times
\max_{s_1\in\mathcal{B}_{l_1}}\max_{s_2\in\mathcal{B}_{l_2}}\vert\frac{1}{T}\sum_{t_1 = p}^{T - 1}\sum_{t_2 = p}^{T - 1}K\left(\frac{t_1 - t_2}{k_T}\right)\boldsymbol\omega_{j_1s_1}^{(t_1, l_1)}\boldsymbol\omega_{j_2s_2}^{(t_2, l_2)}
\vert.
\end{align*}
Therefore,
\begin{align*}
\max_{l_1, l_2 = 1,\cdots, d, j_1, j_2 = 1,\cdots, pd}\vert\frac{1}{T}\sum_{t_1 = p}^{T - 1}\sum_{t_2 = p}^{T - 1}K\left(\frac{t_1 - t_2}{k_T}\right)\times
\left(
\sum_{q = 1}^{pd}\mathcal{F}_{\mathcal{B}_{l_1}}(\boldsymbol\Sigma^{(0)})_{j_1q}\widehat{\boldsymbol\Theta}_{ql_1}^{(t)}
\right)
\times \left(
\sum_{q = 1}^{pd}\mathcal{F}_{\mathcal{B}_{l_2}}(\boldsymbol\Sigma^{(0)})_{j_2q}\widehat{\boldsymbol\Theta}_{ql_2}^{(t)}
\right)\\
- \frac{1}{T}\sum_{t_1 = p}^{T - 1}\sum_{t_2 = p}^{T - 1}K\left(\frac{t_1 - t_2}{k_T}\right)\boldsymbol\psi_{(l_1 - 1)\times pd +j_1}^{(t_1 + 1)}\boldsymbol\psi_{(l_2 - 1)\times pd + j_2}^{(t_2 + 1)}\vert\\
= \max_{l_1, l_2 = 1,\cdots, d, j_1\in\mathcal{B}_{l_1}, j_2\in\mathcal{B}_{l_2}}\vert\frac{1}{T}\sum_{t_1 = p}^{T - 1}\sum_{t_2 = p}^{T - 1}K\left(\frac{t_1 - t_2}{k_T}\right)\times
\left(
\sum_{q = 1}^{pd}\mathcal{F}_{\mathcal{B}_{l_1}}(\boldsymbol\Sigma^{(0)})_{j_1q}\widehat{\boldsymbol\Theta}_{ql_1}^{(t)}
\right)
\times \left(
\sum_{q = 1}^{pd}\mathcal{F}_{\mathcal{B}_{l_2}}(\boldsymbol\Sigma^{(0)})_{j_2q}\widehat{\boldsymbol\Theta}_{ql_2}^{(t)}
\right)\\
- \frac{1}{T}\sum_{t_1 = p}^{T - 1}\sum_{t_2 = p}^{T - 1}K\left(\frac{t_1 - t_2}{k_T}\right)\boldsymbol\psi_{(l_1 - 1)\times pd +j_1}^{(t_1 + 1)}\boldsymbol\psi_{(l_2 - 1)\times pd + j_2}^{(t_2 + 1)}\vert\\
\leq C\max_{j = 1,\cdots, pd, l = 1,\cdots, d}\vert\widehat{\mathbf{S}}_{jl} - \mathbf{S}_{jl}\vert\times \max_{l_1, l_2 = 1,\cdots, d, j_1,s\in\mathcal{B}_{l_1}, j_2\in\mathcal{B}_{l_2}}\vert\frac{1}{T}\sum_{t_1 = p}^{T - 1}\sum_{t_2 = p}^{T - 1} K\left(\frac{t_1 - t_2}{k_T}\right)\boldsymbol\psi_{(l_2 - 1)\times pd + j_2}^{(t_2 + 1)}\boldsymbol\omega_{j_1s}^{(t_1, l_1)}\vert\\
+ C\max_{j = 1,\cdots, pd, l = 1,\cdots, d}\vert\widehat{\mathbf{S}}_{jl} - \mathbf{S}_{jl}\vert\times \max_{l_1,l_2 = 1,\cdots, d, j_1\in\mathcal{B}_{l_1}, j_2,s\in\mathcal{B}_2}\vert\frac{1}{T}\sum_{t_1 = p}^{T - 1}\sum_{t_2 = p}^{T - 1}K\left(\frac{t_1 - t_2}{k_T}\right)\boldsymbol\psi_{(l_1 - 1)\times pd + j_1}^{(t_1 + 1)}\boldsymbol\omega_{j_2s}^{(t_2, l_2)}\vert\\
+ C\max_{j = 1,\cdots, pd, l = 1,\cdots, d}\vert\widehat{\mathbf{S}}_{jl} - \mathbf{S}_{jl}\vert^2\times
\max_{l_1, l_2 = 1,\cdots, d, s_1,j_1\in\mathcal{B}_{l_1}, s_2,j_2\in\mathcal{B}_{l_2}}\vert\frac{1}{T}\sum_{t_1 = p}^{T - 1}\sum_{t_2 = p}^{T - 1}K\left(\frac{t_1 - t_2}{k_T}\right)\boldsymbol\omega_{j_1s_1}^{(t_1, l_1)}\boldsymbol\omega_{j_2s_2}^{(t_2, l_2)}
\vert.
\end{align*}

Notice that the matrix $\left(K\left(\frac{t_1 - t_2}{k_T}\right)\right)_{t_1, t_2 = p,\cdots, T - 1}$ is a Toeplitz matrix. Therefore, from section 0.9.7 in \cite{MR2978290}, for any given $l_1, l_2, j_1, j_2, s$,
\begin{align*}
\Vert\ \vert\frac{1}{T}\sum_{t_1 = p}^{T - 1}\sum_{t_2 = p}^{T - 1} K\left(\frac{t_1 - t_2}{k_T}\right)\boldsymbol\psi_{(l_2 - 1)\times pd + j_2}^{(t_2 + 1)}\boldsymbol\omega_{j_1s}^{(t_1, l_1)}\vert\ \Vert_{m/4}\\
\leq \frac{Ck_T}{T}\times \Vert\ \sqrt{\sum_{t = p}^{T -  1}\boldsymbol\psi_{(l_2 - 1)\times pd + j_2}^{(t + 1)2}}
\times \sqrt{\sum_{t = p}^{T -  1}\boldsymbol\omega_{j_1s}^{(t, l_1)2}}\ \Vert_{m/4}\\
\leq \frac{Ck_T}{T}\times \sqrt{\Vert\sum_{t = p}^{T -  1}\boldsymbol\psi_{(l_2 - 1)\times pd + j_2}^{(t + 1)2}\Vert_{m/4}}\times \sqrt{\Vert\sum_{t = p}^{T - 1}\boldsymbol\omega_{j_1s}^{(t, l_1)2}\Vert_{m/4}}\\
\leq \frac{Ck_T}{T}\times \sqrt{\sum_{t = p}^{T -  1}\Vert\boldsymbol\psi_{(l_2 - 1)\times pd + j_2}^{(t + 1)}\Vert_{m/2}^2}\times \sqrt{\sum_{t = p}^{T - 1}\Vert\boldsymbol\omega_{j_1s}^{(t, l_1)}\Vert_{m/2}^2}\leq C_1k_T\\
\text{and } \Vert\ \vert\frac{1}{T}\sum_{t_1 = p}^{T - 1}\sum_{t_2 = p}^{T - 1}K\left(\frac{t_1 - t_2}{k_T}\right)\boldsymbol\omega_{j_1s_1}^{(t_1, l_1)}\boldsymbol\omega_{j_2s_2}^{(t_2, l_2)}
\vert\ \Vert_{m/4}
\leq \frac{Ck_T}{T}\Vert\sum_{t = p}^{T - 1}\boldsymbol\omega_{j_1s_1}^{(t, l_1)2}\Vert_{m/4}^{1/2}\times \Vert\sum_{t = p}^{T - 1}\boldsymbol\omega_{j_2s_2}^{(t, l_2)2}\Vert_{m/4}^{1/2}\\
\leq \frac{Ck_T}{T}\sqrt{\sum_{t = p}^{T - 1}\Vert\boldsymbol\omega_{j_1s_1}^{(t, l_1)}\Vert_{m/2}^2}\times \sqrt{\sum_{t = p}^{T - 1}\Vert\boldsymbol\omega_{j_2s_2}^{(t, l_2)}\Vert_{m/2}^2}\leq C_1k_T
\end{align*}
for a constant $C_1$, which implies
\begin{align*}
\max_{l_1, l_2 = 1,\cdots, d, j_1, s\in\mathcal{B}_{l_1}, j_2\in\mathcal{B}_{l_2}}\vert\frac{1}{T}\sum_{t_1 = p}^{T - 1}\sum_{t_2 = p}^{T - 1} K\left(\frac{t_1 - t_2}{k_T}\right)\boldsymbol\psi_{(l_2 - 1)\times pd + j_2}^{(t_2 + 1)}\boldsymbol\omega_{j_1s}^{(t_1, l_1)}\vert = O_p\left(T^{\frac{8\alpha_d}{m}}\times k_T\right),\\
\max_{l_1,l_2 = 1,\cdots, d, j_1\in\mathcal{B}_{l_1}, j_2,s\in\mathcal{B}_2}\vert\frac{1}{T}\sum_{t_1 = p}^{T - 1}\sum_{t_2 = p}^{T - 1}K\left(\frac{t_1 - t_2}{k_T}\right)\boldsymbol\psi_{(l_1 - 1)\times pd + j_1}^{(t_1 + 1)}\boldsymbol\omega_{j_2s}^{(t_2, l_2)}\vert= O_p\left(T^{\frac{8\alpha_d}{m}}\times k_T\right),\\
\text{and }\max_{l_1, l_2 = 1,\cdots, d, s_1,j_1\in\mathcal{B}_{l_1}, s_2,j_2\in\mathcal{B}_{l_2}}\vert\frac{1}{T}\sum_{t_1 = p}^{T - 1}\sum_{t_2 = p}^{T - 1}K\left(\frac{t_1 - t_2}{k_T}\right)\boldsymbol\omega_{j_1s_1}^{(t_1, l_1)}\boldsymbol\omega_{j_2s_2}^{(t_2, l_2)}
\vert  = O_p\left(T^{\frac{8\alpha_d}{m}}\times k_T\right).
\end{align*}
As a result, we have
\begin{equation}
\begin{aligned}
\max_{l_1, l_2 = 1,\cdots, d, j_1, j_2 = 1,\cdots, pd}\vert\frac{1}{T}\sum_{t_1 = p}^{T - 1}\sum_{t_2 = p}^{T - 1}K\left(\frac{t_1 - t_2}{k_T}\right)\times
\left(
\sum_{q = 1}^{pd}\mathcal{F}_{\mathcal{B}_{l_1}}(\boldsymbol\Sigma^{(0)})_{j_1q}\widehat{\boldsymbol\Theta}_{ql_1}^{(t)}
\right)
\times \left(
\sum_{q = 1}^{pd}\mathcal{F}_{\mathcal{B}_{l_2}}(\boldsymbol\Sigma^{(0)})_{j_2q}\widehat{\boldsymbol\Theta}_{ql_2}^{(t)}
\right)\\
- \frac{1}{T}\sum_{t_1 = p}^{T - 1}\sum_{t_2 = p}^{T - 1}K\left(\frac{t_1 - t_2}{k_T}\right)\boldsymbol\psi_{(l_1 - 1)\times pd +j_1}^{(t_1 + 1)}\boldsymbol\psi_{(l_2 - 1)\times pd + j_2}^{(t_2 + 1)}\vert\\
= O_p\left(T^{\frac{8\alpha_d}{m} - \frac{1}{2}}\times k_T\times \sqrt{\log(T)}\right)
\end{aligned}
\end{equation}
Form eq.\eqref{eq.delta_Sigma}, $\max_{l = 1,\cdots, d}\vert\mathcal{F}_{\mathcal{B}_l}(\widehat{\boldsymbol\Sigma}^{(0)}) - \mathcal{F}_{\mathcal{B}_l}(\boldsymbol\Sigma^{(0)})\vert_{L_1}
= O_p\left(T^{\frac{4\alpha_d}{m} - \frac{1}{2}}\right)$. Therefore, define the terms $\boldsymbol\xi_{jq}^{(l)} = \mathcal{F}_{\mathcal{B}_l}(\widehat{\boldsymbol\Sigma}^{(0)})_{jq} - \mathcal{F}_{\mathcal{B}_l}(\boldsymbol\Sigma^{(0)})_{jq}$, we have
\begin{align*}
\sum_{q = 1}^{pd}\mathcal{F}_{\mathcal{B}_l}(\widehat{\boldsymbol\Sigma}^{(0)})_{jq}\widehat{\boldsymbol\Theta}_{ql}^{(t)}
= \sum_{q\in\mathcal{B}_l}\mathcal{F}_{\mathcal{B}_l}(\boldsymbol\Sigma^{(0)})_{jq}\widehat{\boldsymbol\Theta}_{ql}^{(t)} + \sum_{q\in\mathcal{B}_l}\boldsymbol\xi_{jq}^{(l)}\widehat{\boldsymbol\Theta}_{ql}^{(t)}
\end{align*}
and
\begin{align*}
\vert\frac{1}{T}\sum_{t_1 = p}^{T - 1}\sum_{t_2 = p}^{T - 1}K\left(\frac{t_1 - t_2}{k_T}\right)\times
\left(
\sum_{q = 1}^{pd}\mathcal{F}_{\mathcal{B}_{l_1}}(\widehat{\boldsymbol\Sigma}^{(0)})_{j_1q}\widehat{\boldsymbol\Theta}_{ql_1}^{(t)}
\right)
\times \left(
\sum_{q = 1}^{pd}\mathcal{F}_{\mathcal{B}_{l_2}}(\widehat{\boldsymbol\Sigma}^{(0)})_{j_2q}\widehat{\boldsymbol\Theta}_{ql_2}^{(t)}
\right)\\
- \frac{1}{T}\sum_{t_1 = p}^{T - 1}\sum_{t_2 = p}^{T - 1}K\left(\frac{t_1 - t_2}{k_T}\right)\times
\left(
\sum_{q = 1}^{pd}\mathcal{F}_{\mathcal{B}_{l_1}}(\boldsymbol\Sigma^{(0)})_{j_1q}\widehat{\boldsymbol\Theta}_{ql_1}^{(t)}
\right)
\times \left(
\sum_{q = 1}^{pd}\mathcal{F}_{\mathcal{B}_{l_2}}(\boldsymbol\Sigma^{(0)})_{j_2q}\widehat{\boldsymbol\Theta}_{ql_2}^{(t)}
\right)\vert\\
\leq \vert
\sum_{q_1\in\mathcal{B}_{l_1}}\sum_{q_2\in\mathcal{B}_{l_2}}\boldsymbol\xi_{j_1q_1}^{(l_1)}\mathcal{F}_{\mathcal{B}_{l_2}}(\boldsymbol\Sigma^{(0)})_{j_2q_2}
\times \frac{1}{T}\sum_{t_1 = p}^{T - 1}\sum_{t_2 = p}^{T - 1} K\left(\frac{t_1 - t_2}{k_T}\right)\widehat{\boldsymbol\Theta}_{q_1l_1}^{(t_1)}\widehat{\boldsymbol\Theta}_{q_2l_2}^{(t_2)}
\vert\\
+ \vert
\sum_{q_1\in\mathcal{B}_{l_1}}\sum_{q_2\in\mathcal{B}_{l_2}}\mathcal{F}_{\mathcal{B}_{l_2}}(\boldsymbol\Sigma^{(0)})_{j_1q_1}\boldsymbol\xi_{j_2q_2}^{(l_2)}
\times \frac{1}{T}\sum_{t_1 = p}^{T - 1}\sum_{t_2 = p}^{T - 1} K\left(\frac{t_1 - t_2}{k_T}\right)\widehat{\boldsymbol\Theta}_{q_1l_1}^{(t_1)}\widehat{\boldsymbol\Theta}_{q_2l_2}^{(t_2)}
\vert\\
+\vert
\sum_{q_1\in\mathcal{B}_{l_1}}\sum_{q_2\in\mathcal{B}_{l_2}}\boldsymbol\xi_{j_1q_1}^{(l_1)}\boldsymbol\xi_{j_2q_2}^{(l_2)}\times \frac{1}{T}\sum_{t_1 = p}^{T - 1}\sum_{t_2 = p}^{T - 1} K\left(\frac{t_1 - t_2}{k_T}\right)\widehat{\boldsymbol\Theta}_{q_1l_1}^{(t_1)}\widehat{\boldsymbol\Theta}_{q_2l_2}^{(t_2)}
\vert
\end{align*}
From Cauchy - Schwarz inequality
\begin{align*}
\sum_{t = p}^{T - 1}\widehat{\boldsymbol\Theta}_{ql}^{(t)2} = \sum_{t = p}^{T - 1}\mathbf{z}^{(t)2}_q\times\left(\boldsymbol\epsilon^{(t + 1)}_l - \sum_{s = 1}^{pd}\mathbf{z}^{(t)}_s(\widehat{\mathbf{S}}_{sl} - \mathbf{S}_{sl})\right)^2\\
\leq 2\sum_{t = p}^{T - 1}\mathbf{z}^{(t)2}_q\boldsymbol\epsilon^{(t + 1)2}_l + 2\sum_{t = p}^{T - 1}\mathbf{z}^{(t)2}_q\sum_{s\in\mathcal{B}_l}\mathbf{z}_s^{(t)2}\times \sum_{s\in\mathcal{B}_l}(\widehat{\mathbf{S}}_{sl} - \mathbf{S}_{sl})^2.
\end{align*}
For
\begin{align*}
\Vert\sum_{t = p}^{T - 1}\mathbf{z}^{(t)2}_q\boldsymbol\epsilon^{(t + 1)2}_l\Vert_{m/4}\leq \sum_{t = p}^{T - 1}\Vert\mathbf{z}_q\Vert_m^2\times\Vert\boldsymbol\epsilon_l^{(t + 1)}\Vert_m^2\leq CT\\
\Rightarrow \max_{l = 1,\cdots, d, q\in\mathcal{B}_l}\sum_{t = p}^{T - 1}\mathbf{z}^{(t)2}_q\boldsymbol\epsilon^{(t + 1)2}_l = O_p\left(T^{\frac{4\alpha_d}{m} + 1}\right)
\end{align*}
and
\begin{align*}
\Vert
\sum_{t = p}^{T - 1}\mathbf{z}^{(t)2}_q\sum_{s\in\mathcal{B}_l}\mathbf{z}_s^{(t)2}
\Vert_{m/4}\leq \sum_{t = p}^{T - 1}\sum_{s\in\mathcal{B}_l}\Vert\mathbf{z}^{(t)}_q\Vert_{m}^2\Vert\mathbf{z}_s^{(t)}\Vert_m^2\leq CT\\
\Rightarrow \max_{l = 1,\cdots, d, q\in\mathcal{B}_l}\sum_{t = p}^{T - 1}\mathbf{z}^{(t)2}_q\sum_{s\in\mathcal{B}_l}\mathbf{z}_s^{(t)2} = O_p\left(T^{\frac{4\alpha_d}{m} + 1} \right).
\end{align*}
From eq. \eqref{eq.S_max_bound},we have
\begin{align*}
\max_{l = 1,\cdots, d, q\in\mathcal{B}_l}\sum_{t = p}^{T - 1}\widehat{\boldsymbol\Theta}_{ql}^{(t)2}
\leq 2\max_{l = 1,\cdots, d, q\in\mathcal{B}_l}\sum_{t = p}^{T - 1}\mathbf{z}^{(t)2}_q\boldsymbol\epsilon^{(t + 1)2}_l\\
 + 2\max_{l = 1,\cdots, d, q\in\mathcal{B}_l}\sum_{t = p}^{T - 1}\mathbf{z}^{(t)2}_q\sum_{s\in\mathcal{B}_l}\mathbf{z}_s^{(t)2}\times \max_{l = 1,\cdots, d, q\in\mathcal{B}_l}\sum_{s\in\mathcal{B}_l}(\widehat{\mathbf{S}}_{sl} - \mathbf{S}_{sl})^2\\
 = O_p\left(T^{\frac{4\alpha_d}{m} + 1}\right).
\end{align*}
Therefore, from section 0.9.7 in \cite{MR2978290}
\begin{align*}
\max_{l_1,l_2 = 1,\cdots, d, q_1\in\mathcal{B}_{l_1}, q_2\in\mathcal{B}_{l_2}}\vert\frac{1}{T}\sum_{t_1 = p}^{T - 1}\sum_{t_2 = p}^{T - 1} K\left(\frac{t_1 - t_2}{k_T}\right)\widehat{\boldsymbol\Theta}_{q_1l_1}^{(t_1)}\widehat{\boldsymbol\Theta}_{q_2l_2}^{(t_2)}\vert\\
\leq \frac{Ck_T}{T}\times \sqrt{\sum_{t = p}^{T - 1}\widehat{\boldsymbol\Theta}_{q_1l_1}^{(t)2}}\times \sqrt{\sum_{t = p}^{T - 1}\widehat{\boldsymbol\Theta}_{q_2l_2}^{(t_2)2}}
= O_p\left(k_T\times T^{\frac{4\alpha_d}{m}}\right),
\end{align*}
and
\begin{align*}
\max_{l_1,l_2 = 1,\cdots, d, j_1,j_2 = 1,\cdots, pd}\vert
\sum_{q_1\in\mathcal{B}_{l_1}}\sum_{q_2\in\mathcal{B}_{l_2}}\mathcal{F}_{\mathcal{B}_{l_2}}(\boldsymbol\Sigma^{(0)})_{j_1q_1}\boldsymbol\xi_{j_2q_2}^{(l_2)}
\times \frac{1}{T}\sum_{t_1 = p}^{T - 1}\sum_{t_2 = p}^{T - 1} K\left(\frac{t_1 - t_2}{k_T}\right)\widehat{\boldsymbol\Theta}_{q_1l_1}^{(t_1)}\widehat{\boldsymbol\Theta}_{q_2l_2}^{(t_2)}
\vert\\
\leq C\max_{l = 1,\cdots, d, q\in\mathcal{B}_l, j  =1,\cdots, pd}\vert\boldsymbol\xi_{j_2q_2}^{(l_2)}\vert
\times \max_{l_1, l_2 = 1,\cdots, d, q_1\in\mathcal{B}_{l_1}, q_2\in\mathcal{B}_{l_2}}\vert\frac{1}{T}\sum_{t_1 = p}^{T - 1}\sum_{t_2 = p}^{T - 1} K\left(\frac{t_1 - t_2}{k_T}\right)\widehat{\boldsymbol\Theta}_{q_1l_1}^{(t_1)}\widehat{\boldsymbol\Theta}_{q_2l_2}^{(t_2)}\vert\\
= O_p\left(k_T\times T^{\frac{8\alpha_d}{m} - \frac{1}{2}}\right).
\end{align*}
Moreover,
\begin{align*}
\max_{j_1,j_2 = 1,\cdots, pd, l_1, l_2 = 1,\cdots, d}\vert
\sum_{q_1\in\mathcal{B}_{l_1}}\sum_{q_2\in\mathcal{B}_{l_2}}\boldsymbol\xi_{j_1q_1}^{(l_1)}\boldsymbol\xi_{j_2q_2}^{(l_2)}\times \frac{1}{T}\sum_{t_1 = p}^{T - 1}\sum_{t_2 = p}^{T - 1} K\left(\frac{t_1 - t_2}{k_T}\right)\widehat{\boldsymbol\Theta}_{q_1l_1}^{(t_1)}\widehat{\boldsymbol\Theta}_{q_2l_2}^{(t_2)}
\vert\\
\leq C\max_{j = 1,\cdots, pd, l = 1,\cdots, d, q\in\mathcal{B}_l}\vert\boldsymbol\xi_{jq}^{(l)}\vert^2\times\max_{l_1, l_2 = 1,\cdots, d, q_1\in\mathcal{B}_{l_1}, q_2\in\mathcal{B}_{l_2}}\vert
\frac{1}{T}\sum_{t_1 = p}^{T - 1}\sum_{t_2 = p}^{T - 1} K\left(\frac{t_1 - t_2}{k_T}\right)\widehat{\boldsymbol\Theta}_{q_1l_1}^{(t_1)}\widehat{\boldsymbol\Theta}_{q_2l_2}^{(t_2)}
\vert\\
= O_p\left(k_T\times T^{\frac{12\alpha_d}{m} - 1}\right).
\end{align*}
Therefore,
\begin{equation}
\begin{aligned}
\max_{j_1,j_2 = 1,\cdots, pd, l_1, l_2 = 1,\cdots, d}\vert\frac{1}{T}\sum_{t_1 = p}^{T - 1}\sum_{t_2 = p}^{T - 1}K\left(\frac{t_1 - t_2}{k_T}\right)\times
\left(
\sum_{q = 1}^{pd}\mathcal{F}_{\mathcal{B}_{l_1}}(\widehat{\boldsymbol\Sigma}^{(0)})_{j_1q}\widehat{\boldsymbol\Theta}_{ql_1}^{(t)}
\right)
\times \left(
\sum_{q = 1}^{pd}\mathcal{F}_{\mathcal{B}_{l_2}}(\widehat{\boldsymbol\Sigma}^{(0)})_{j_2q}\widehat{\boldsymbol\Theta}_{ql_2}^{(t)}
\right)\\
- \frac{1}{T}\sum_{t_1 = p}^{T - 1}\sum_{t_2 = p}^{T - 1}K\left(\frac{t_1 - t_2}{k_T}\right)\times
\left(
\sum_{q = 1}^{pd}\mathcal{F}_{\mathcal{B}_{l_1}}(\boldsymbol\Sigma^{(0)})_{j_1q}\widehat{\boldsymbol\Theta}_{ql_1}^{(t)}
\right)
\times \left(
\sum_{q = 1}^{pd}\mathcal{F}_{\mathcal{B}_{l_2}}(\boldsymbol\Sigma^{(0)})_{j_2q}\widehat{\boldsymbol\Theta}_{ql_2}^{(t)}
\right)\vert\\
= O_p\left(k_T\times T^{\frac{8\alpha_d}{m} - \frac{1}{2}}\right).
\end{aligned}
\end{equation}
and we prove eq.\eqref{eq.convergence_covariance}.

According to algorithm \ref{algorithm.bootstrap}, $\boldsymbol\Delta^*_{jl}$ have joint normal distribution conditional
on the observations $\mathbf{x}^{(t)}$. From lemma B.1 in \cite{zhangPolitis}, we have for sufficiently large $T$
\begin{align*}
Prob^*\left(\max_{j = 1,\cdots, pd, l = 1,\cdots, d}\vert\boldsymbol\Delta^*_{jl}\vert\leq x\right) =
Prob^*\left(\max_{l = 1,\cdots, d, j \in \mathcal{B}_l}\vert\boldsymbol\Delta^*_{jl}\vert\leq x\right)\\
\Rightarrow
\sup_{x\in\mathbf{R}}\vert Prob^*\left(\max_{j = 1,\cdots, pd, l = 1,\cdots, d}\vert\boldsymbol\Delta^*_{jl}\vert\leq x\right)
- Prob\left(\max_{l = 1,\cdots, d, j\in\mathcal{B}_l}\vert\boldsymbol\Gamma_{jl}\vert\leq x\right)
\vert\\
\leq C(1 + \log^3(T))\eta^{1/3} + \eta^{1/6},
\end{align*}
here $\eta = \max_{j = 1,\cdots, pd, l = 1,\cdots, d}\vert\mathbf{E}^*\boldsymbol\Delta_{j_1l_1}^*\boldsymbol\Delta_{j_2l_2}^* -
\boldsymbol\Lambda^{(l_1, l_2)}_{j_1 j_2}\vert $, which has order
$=
O_p\left(v_T + k_T\sqrt{\log(T)}T^{\frac{8\alpha_d}{m} - \frac{1}{2}}\right)$. This observation proves eq.\eqref{eq.boot_consistency}.

\end{proof}

\section{Extra numerical simulation results}
This section demonstrates additional simulation results not included in the manuscript. Similar to Figure \ref{figure.performance_sec},
the performance of the post--selection estimator is comparable to that of the threshold Lasso. It remains effective even when the model
selection algorithms cannot select the optimal hyper-parameters, demonstrating the effectiveness of the post -- selection estimator
in dealing with practical data.
\begin{figure}[htbp]
  \centering
  \subfigure[Independent]{
    \includegraphics[width = 7cm]{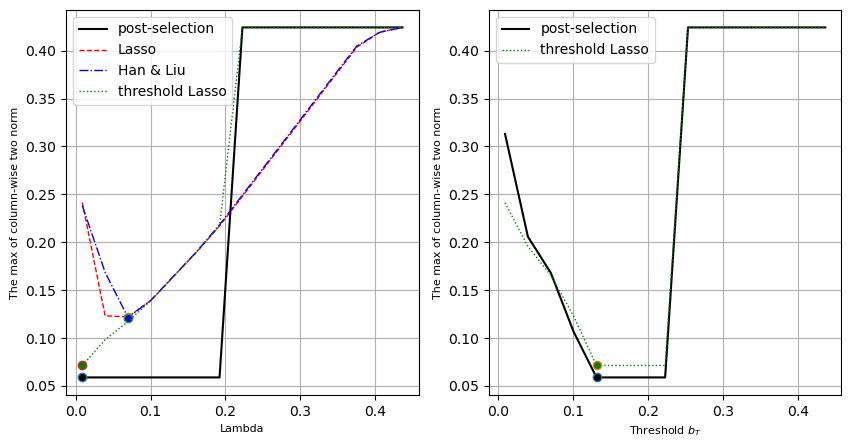}
    \label{figure.a1Ind}
  }
  \subfigure[Product normal] {
    \includegraphics[width=7cm]{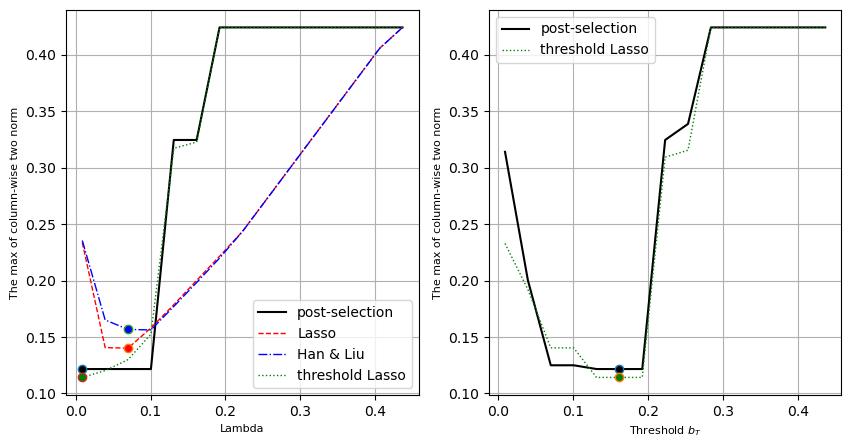}
    \label{figure.a1Prd}
  }
  \subfigure[Non-stationary]{
    \includegraphics[width=7cm]{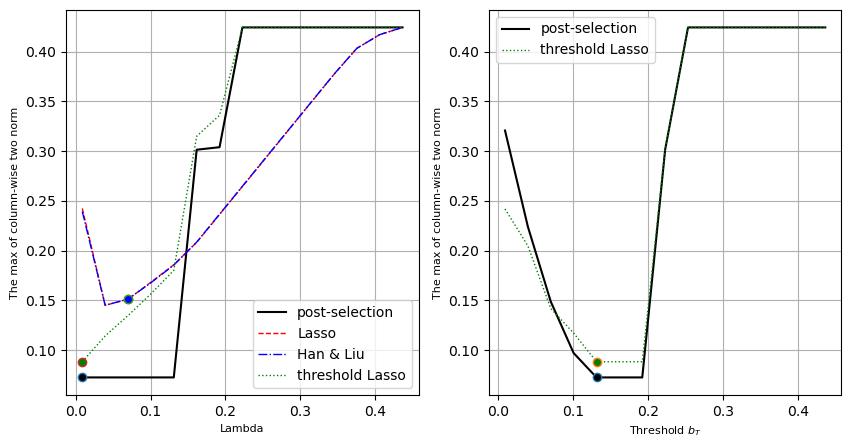}
    \label{figure.a1Non}
  }
  \caption{The performance of various algorithms concerning different choices of hyperparameters.
  Figure \ref{figure.a1Ind}, Figure \ref{figure.a1Prd}, and Figure \ref{figure.a1Non}
  respectively demonstrate the performance of algorithms when the innovations are independent, product normal,
  and non-stationary. The observations are generated using the AR(1) model.}
  \label{figure.performance_a_1}
\end{figure}

\begin{figure}[htbp]
  \centering
  \subfigure[Independent]{
    \includegraphics[width = 7cm]{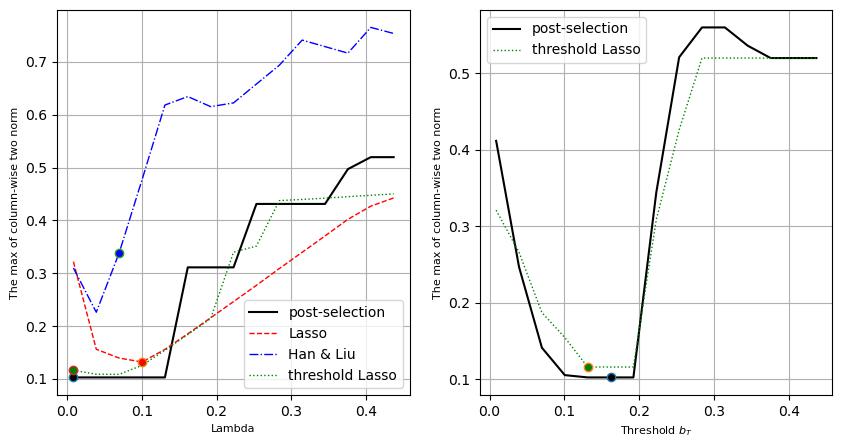}
    \label{figure.a2Ind}
  }
  \subfigure[Product normal] {
    \includegraphics[width=7cm]{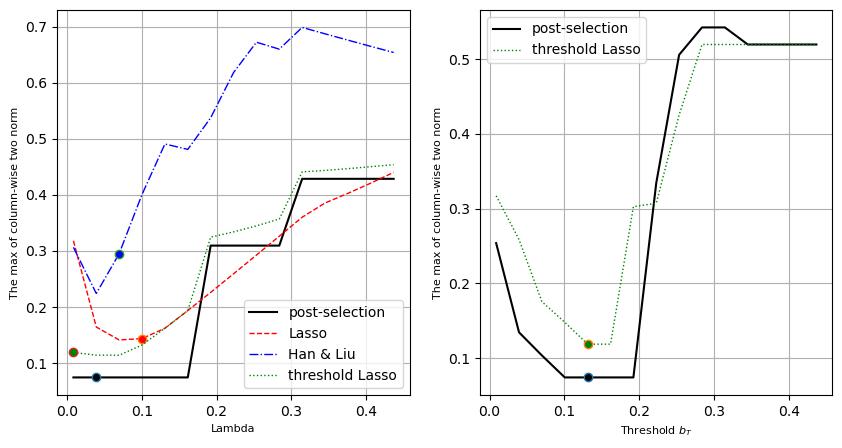}
    \label{figure.a2Prd}
  }
  \subfigure[Non-stationary]{
    \includegraphics[width=7cm]{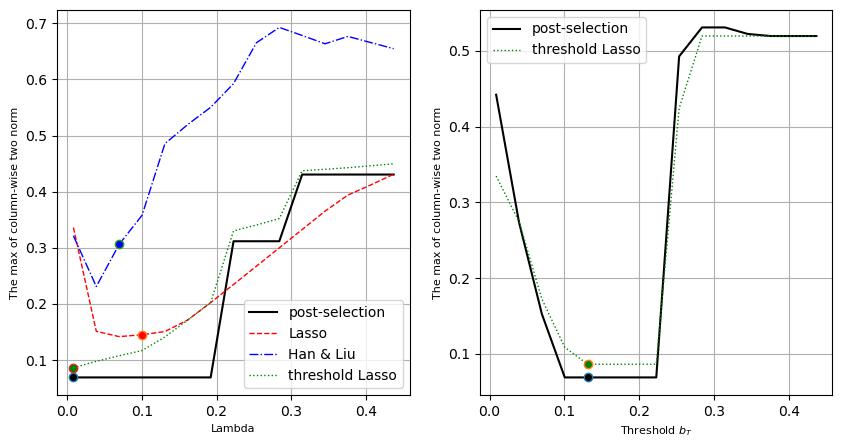}
    \label{figure.a2Non}
  }
  \caption{The performance of various algorithms concerning different choices of hyperparameters.
  Figure \ref{figure.a2Ind}, Figure \ref{figure.a2Prd}, and Figure \ref{figure.a2Non}
  respectively demonstrate the performance of algorithms when the innovations are independent, product normal,
  and non-stationary. The observations are generated using the AR(2) model.}
  \label{figure.performance_a_2}
\end{figure}

\end{document}